\newtheorem{Definition}{\bf \large Definition}[section]
\newtheorem{Theorem}{\bf \large Theorem}[section]
\newtheorem{PROPOSITION}{\bf \large Proposition}[section]
\newtheorem{Corollary}{\bf \large Corollary}[section]
\newtheorem{lemma}{\bf \large Lemma}[section]
\newtheorem{ex}{\bf \large Example}[section]
\newtheorem{Remark}{\bf \large Remark}[section]
\newtheorem{Lemma}{\bf \large Lemma}[section]
\title{\textbf{M\"{o}bius Homogeneous Hypersurfaces  in $\mathbb{S}^{n+1}$}}
\author {\normalsize {Tongzhu Li$^a$,~Xiang Ma$^b$,~Changping Wang$^c$,~Peng Wang$^d$}\\
\small{$^a$Department of Mathematics, Beijing Institute of
Technology, Beijing, China.}\\
\small{$^b$LMAM, School of Mathematical sciences, Peking
University, Beijing, China.}\\
\small{$^{c,d}$College of mathematics and computer Science, Fujian Normal University, Fuzhou, China}\\
\small{E-mail: $^a$litz@bit.edu.cn; $^b$maxiang@math.pku.edu.cn;}\\
\small{$^c$cpwang@fjnu.edu.cn; $^d$pengwang@fjnu.edu.cn.}}
\date{}
\begin{document}
\maketitle
\begin{abstract}
Let $\mathbb{M}(\mathbb{S}^{n+1})$ denote the M\"{o}bius transformation group of  the $(n+1)$-dimensional sphere
$\mathbb{S}^{n+1}$. A hypersurface $x:M^n\to \mathbb{S}^{n+1}$ is called a M\"{o}bius homogeneous hypersurface if there exists a subgroup $G$ of $\mathbb{M}(\mathbb{S}^{n+1})$ such that the orbit $G\cdot p=x(M^n), p\in x(M^n)$.
In this paper, the M\"{o}bius homogeneous hypersurfaces  are classified completely up to a  M\"{o}bius transformation of $\mathbb{S}^{n+1}$.
\end{abstract}
\medskip\noindent
{\bf 2020 Mathematics Subject Classification:} 53A31,53C40;
\par\noindent {\bf Key words:} M\"{o}bius transformation group,  M\"{o}bius homogeneous hypersurface,  M\"{o}bius principal curvature,
M\"{o}bius form.

\vskip 1 cm
\section{Introduction}
A diffeomorphism $\phi: \mathbb{S}^{n+1}\to \mathbb{S}^{n+1}$ is said to be a \emph{M\"{o}bius transformation}, if $\phi$ takes the set of round
$n$-spheres into the set of round $n$-spheres. All M\"{o}bius transformations form a transformation group, which is called the \emph{M\"{o}bius
transformation group} of $\mathbb{S}^{n+1}$ and denoted by $ \mathbb{M}(\mathbb{S}^{n+1})$. It is well-known that, for $n\geq 2$, the M\"{o}bius transformation group $\mathbb{M}(\mathbb{S}^{n+1})$ of $\mathbb{S}^{n+1}$ coincides
with the \emph{conformal transformation group} of $\mathbb{S}^{n+1}$, denoted by  $\mathbb{C}(\mathbb{S}^{n+1})$, i.e.,
$\mathbb{M}(\mathbb{S}^{n+1})=\mathbb{C}(\mathbb{S}^{n+1})$.

A notable class of hypersurfaces in M\"{o}bius differential geometry is the M\"{o}bius homogeneous hypersurfaces in $\mathbb{S}^{n+1}$.  A hypersurface $x:M^n\to \mathbb{S}^{n+1}$ is called a \emph{M\"{o}bius homogeneous hypersurface} if the hypersurface is an orbit of a subgroup of the
M\"{o}bius transformation group of $\mathbb{S}^{n+1}$. Such hypersurfaces are the most symmetrical ones among all hypersurfaces in $\mathbb{S}^{n+1}$ (in the extrinsic sense).

Standard examples of M\"{o}bius homogeneous hypersurfaces in $\mathbb{S}^{n+1}$ are the images of  M\"{o}bius transformations of the isometric homogeneous hypersurfaces in $\mathbb{S}^{n+1}$. The  \emph{isometric homogeneous hypersurface} in $\mathbb{S}^{n+1}$ is an orbit of a subgroup of  the  isometric transformation group $\textup{O}(n+2)$ of $\mathbb{S}^{n+1}$. Clearly, $\textup{O}(n+2)\subseteq \mathbb{M}(\mathbb{S}^{n+1})$.
Isometric homogeneous hypersurfaces in $\mathbb{S}^{n+1}$ is a subclass of \emph{isoparametric hypersurfaces}, which have been systematically studied and classified (see, for instance, \cite{ca1},\cite{ce},\cite{chi},\cite{chi1},\cite{di},\cite{mi},\cite{mu1},\cite{mu2}).

The isometric homogeneous hypersurfaces in $\mathbb{R}^{n+1}$ and $\mathbb{H}^{n+1}$ are also
 M\"{o}bius homogeneous hypersurfaces in $\mathbb{S}^{n+1}$. The hyperbolic space is defined by
\[\mathbb{H}^{n+1}=\{(y_0,\vec{y}_1)\in R^+\times \mathbb{R}^{n+1}| \langle y,y\rangle=-y^2_0+\vec{y}_1\cdot \vec{y}_1=-1\}.\]
We can define the conformal map  $\tau:
\mathbb{H}^{n+1}\mapsto \mathbb{S}^{n+1}$ by
$$\tau(y)=(\frac{1}{y_0},\frac{\vec{y}_1}{y_0}),~~~y=(y_0,\vec{y}_1)\in
\mathbb{H}^{n+1}.$$
The inverse of the \emph{stereographic projection} $\sigma:\mathbb{R}^{n+1}\mapsto \mathbb{S}^{n+1}$ is defined by
$$\sigma(u)=(\frac{1-|u|^2}{1+|u|^2},\frac{2u}{1+|u|^2}).$$
The conformal maps $\sigma$ and $\tau$ embed a hypersurface of $\mathbb{R}^{n+1}$
or $\mathbb{H}^{n+1}$ into $\mathbb{S}^{n+1}$.
If $f:M^n\to \mathbb{R}^{n+1}$ (or $\mathbb{H}^{n+1}$) be an isometric homogeneous hypersurface, then the hypersurface $x=\sigma\circ f$ (or $\tau\circ f$) is a M\"{o}bius homogeneous hypersurface in $\mathbb{S}^{n+1}$.

Another class of examples come from the \emph{cone hypersurface} in $\mathbb{R}^{n+1}$ over any isometric homogeneous hypersurface in $\mathbb{S}^{m+1}~(m<n)$.
\begin{ex}\label{ex01}
Let $u:M^m\to \mathbb{S}^{m+1}$ be an immersed hypersurface. We define the cone hypersurface in $\mathbb{R}^{n+1}$ over $u$ as
$$f:\mathbb{R}^+\times \mathbb{R}^{n-m-1}\times M^m\to \mathbb{R}^{n+1},~~1\leq m\leq n-1,$$
$$f(t,y,p)=(y,tu(p)).$$
\end{ex}
These examples above come from homogeneous hypersurfaces in $\mathbb{S}^{n+1}$, $\mathbb{R}^{n+1}$ or $\mathbb{H}^{n+1}$.
But there are some examples of M\"{o}bius homogeneous hypersurfaces which can't be obtained in this way.
In \cite{s}, Sulanke constructed a M\"{o}bius homogeneous surface, which is a cylinder over a logarithmic spiral in $\mathbb{R}^2$, and he classified all M\"{o}bius homogeneous surfaces in $\mathbb{R}^3$. As a generalization, for M\"{o}bius homogeneous surfaces in $\mathbb{S}^{n+1}$, the third author and Zhenxiao Xie \cite{wx} classified them in $\mathbb{S}^4$; the second and the fourth author (together with Pedit) gave a classification for Willmore 2-spheres in \cite{ma}.

In \cite{lim}, we constructed a \emph{logarithmic spiral cylinder}, which is a high dimensional
version of Sulanke's example.
\begin{ex}\label{ex02}
Let $\gamma:I\to \mathbb{R}^2$ be the \emph{logarithmic spiral} in the Euclidean plane $\mathbb{R}^2$ given by
$$\gamma(s)=(e^{cs}\cos s,e^{cs}\sin s),~~~ c>0.$$
The \emph{logarithmic spiral cylinder} in $\mathbb{R}^{n+1}$ over $\gamma(s)$ is defined by
\begin{equation*}
\begin{split}
&f=(\gamma,id):I\times \mathbb{R}^{n-1}\mapsto \mathbb{R}^{n+1},\\
&f(s,y)=f(s,y_1,\cdots,y_{n-1})=(e^{cs}\cos s,e^{cs}\sin s,y_1,\cdots,y_{n-1})\in \mathbb{R}^{n+1}.
\end{split}
\end{equation*}
\end{ex}

In \cite{chenli}, the first author and his students classified the M\"{o}bius homogeneous hypersurfaces with one simple principal curvature in $\mathbb{S}^{n+1}$. In \cite{jl}, we pointed out that the compact M\"{o}bius homogeneous submanifolds
are M\"{o}bius equivalent to the isometric homogeneous submanifolds in $\mathbb{S}^{n+1}$. M\"{o}bius homogeneous hypersurfaces with small dimension or small number of distinct principal curvatures are relatively easy to classify (see \cite{LiT},\cite{lim},\cite{lim2},\cite{w1}).

In this paper, we classify all M\"{o}bius homogeneous hypersurfaces.
The Main Theorem is as follows.
\begin{Theorem}\label{th11}
Let $x: M^n\rightarrow \mathbb{S}^{n+1}$ be a M\"{o}bius homogeneous hypersurfaces.
Then $x$ is M\"{o}bius equivalent to one of
the following examples:\\
(1)  the isometric homogeneous hypersurfaces in $\mathbb{S}^{n+1}$;\\
(2)  the isometric homogeneous ones in $\mathbb{R}^{n+1}$;\\
(3)  the isometric homogeneous ones in $\mathbb{H}^{n+1}$;\\
(4)  the cone hypersurfaces over isometric homogeneous hypersurfaces in $\mathbb{S}^{m+1}(m< n)$;\\
(5)  the logarithmic spiral cylinders.
\end{Theorem}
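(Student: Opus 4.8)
The plan is to organize everything around the \emph{M\"{o}bius form} $\Phi$ and to exploit the rigidity coming from the transitive action of $G\subset\mathbb{M}(\mathbb{S}^{n+1})$. Work in the light-cone model: lift $x$ to its canonical lift $Y:M^n\to\mathbb{R}^{n+3}_1$, build the adapted M\"{o}bius frame $\{Y,N,\xi,E_1,\dots,E_n\}$, and record the structure equations in terms of the M\"{o}bius metric $g$, the Blaschke tensor $A$, the M\"{o}bius second fundamental form $B$ and the M\"{o}bius form $\Phi$. Since every $\phi\in G$ is a M\"{o}bius transformation carrying $x(M^n)$ to itself, it preserves all of these invariants; transitivity of $G$ then forces the M\"{o}bius principal curvatures (the eigenvalues of $g^{-1}B$) to be constant with constant multiplicities, the eigenvalues of $g^{-1}A$ to be constant, and $|\Phi|_g$ to be a constant. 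Hence exactly one of two cases occurs: $\Phi\equiv 0$, or $\Phi$ is nowhere zero. (A compact orbit forces $\Phi\equiv 0$ and is already handled in \cite{jl}, so in the second case $M^n$ is automatically non-compact and only local arguments are needed.)

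\emph{First case: $\Phi\equiv 0$.} Together with the constancy of the M\"{o}bius principal curvatures this says precisely that $x$ is a \emph{M\"{o}bius isoparametric hypersurface}, and by the known classification of such hypersurfaces, $x$ is M\"{o}bius equivalent to one of: an isoparametric hypersurface in $\mathbb{S}^{n+1}$; the image under $\sigma$ of an isoparametric hypersurface in $\mathbb{R}^{n+1}$; the image under $\tau$ of an isoparametric hypersurface in $\mathbb{H}^{n+1}$; or a cone over an isoparametric hypersurface in some $\mathbb{S}^{m+1}$ with $m<n$. In each model the distinguished configuration (the point at infinity in the $\sigma$-picture, the boundary sphere in the $\tau$-picture, the vertex subspace of the cone) is canonically determined by the M\"{o}bius geometry of $x$ — e.g.\ through the distribution annihilated by $B$ or through the concircular structure of $g$ — hence it is preserved by $G$. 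Therefore $G$ descends to a group acting on the relevant space form by isometries and transitively on the base hypersurface, so the base is an isometric homogeneous isoparametric hypersurface. This yields cases (1)--(4).

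\emph{Second case: $\Phi$ nowhere zero.} Let $T$ be the unit vector field $g$-dual to $\Phi/|\Phi|_g$ and $\mathcal{D}=T^{\perp}$ the orthogonal distribution of rank $n-1$. Using $G$-invariance to make all structure functions constant in an adapted orthonormal frame $E_1=T,E_2,\dots,E_n$, the M\"{o}bius Gauss, Codazzi and Ricci equations reduce to a finite system of algebraic relations together with constant connection coefficients. Analyzing this system one shows that $B$, $A$ and the Levi-Civita connection all respect the splitting $\mathbb{R}T\oplus\mathcal{D}$; that $\mathcal{D}$ is integrable with totally umbilic leaves which — because $\Phi$ is nowhere zero — carry no spherical factor and must be flat; and that the one-dimensional "profile" in the $T$-direction is governed by a constant-coefficient linear ODE system whose solution is a logarithmic spiral, the constant $c$ being fixed by the ratio of $|\Phi|_g$ to the non-zero M\"{o}bius principal curvature. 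Integrating and mapping back to $\mathbb{S}^{n+1}$ identifies $x$, up to a M\"{o}bius transformation, with a logarithmic spiral cylinder $\gamma\times\mathbb{R}^{n-1}$ as in Example~\ref{ex02}; any further splitting of $\mathcal{D}$ only adds Euclidean factors that are absorbed into the $\mathbb{R}^{n-1}$. This gives case (5) and exhausts all possibilities.

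The routine part is the first case, which is reduced to a known theorem plus a bookkeeping argument identifying the configuration that $G$ must fix. The main obstacle is the second case: one must extract, purely from $G$-invariance and the integrability conditions, enough structure both to pin down the leaf geometry of $\mathcal{D}$ and to integrate the $T$-direction ODE to the explicit spiral, and this has to be carried out uniformly over all possible numbers of distinct M\"{o}bius principal curvatures so as to rule out any exotic non-isoparametric M\"{o}bius homogeneous hypersurface other than the logarithmic spiral cylinders.
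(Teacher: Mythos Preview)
Your dichotomy ($\Phi\equiv 0$ versus $\Phi$ nowhere zero) is the right starting point, and your first case is handled essentially as in the paper: constant M\"obius principal curvatures plus $\Phi\equiv 0$ means M\"obius isoparametric, and one invokes the known classification. The substantive disagreement is in the second case, and there your sketch has a genuine gap.

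The logarithmic spiral cylinder has exactly two distinct M\"obius principal curvatures, so the real content of your second case is the assertion that \emph{$\Phi$ nowhere zero forces $r=2$} --- equivalently, that $r\ge 3$ implies $\Phi\equiv 0$. This is precisely the paper's Vanishing Theorem and is the heart of the whole argument. Your sketch does not prove it: you assert that ``$B$, $A$ and the Levi--Civita connection all respect the splitting $\mathbb{R}T\oplus\mathcal{D}$'' and that the leaves of $\mathcal{D}$ are flat, but there is no a~priori reason why the $\Phi$-direction $T$ should be an eigenvector of $B$, nor why the connection should be block-diagonal in this splitting. The number of structure constants grows with $r$, the Codazzi/Ricci system does not obviously close, and you give no mechanism for carrying the analysis through uniformly in $r$. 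The authors say explicitly in the introduction that this direct structure-equation attack was the obstacle that blocked their earlier papers; your final paragraph acknowledges the difficulty but does not resolve it.

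The paper's route around this is genuinely different: it imports the Di~Scala--Olmos trichotomy for connected subgroups $G\subset\textup{O}^+(n+2,1)$ acting on $\mathbb{H}^{n+2}$ (fixed point / unique totally geodesic orbit / all orbits in horospheres through a common ideal point). The first case conjugates $G$ into $\textup{O}(n+2)$, giving an isometrically homogeneous hypersurface in $\mathbb{S}^{n+1}$; the second forces $G=\textup{O}^+(n-m,1)\oplus G_1$ and produces a cone; only the horospherical case requires structure equations, and there the extra geometric input --- that every $G$-equivariant vector field on $Y(M^n)$ has \emph{planar} integral curves in $\mathbb{R}^{n+3}_1$ --- yields additional algebraic relations (the paper's equations (6.8)--(6.10)) that are strong enough to force $C=0$ when $r\ge 3$. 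The $r=2$ case, where the spiral cylinder actually lives, is then handled by the earlier classification in \cite{lim}. Without this orbit-geometric input, or a substitute of comparable strength, your second case is a description of the desired conclusion rather than a proof of it.
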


In the previous works, the main tool we used is the structure equations. The homogeneity forced every geometrically well-defined function to be constant, hence facilitates the computations and the analysis of various cases. Notice that in the M\"{o}bius geometry, the invariants and the structure equations are much more complicated compared with the usual geometry of space forms. Moreover, when the number of distinct principal curvatures are large, these equations are too involved and intimidating. This is the main technical difficulties we faced before.

In this paper, we adopt a new approach by utilizing the subgroup and orbit geometry of the Lorentz group, which is well-known equivalent to the M\"{o}bius transformation group $\mathbb{M}(\mathbb{S}^{n+1})$ and the isometry group of the hyperbolic space $\mathbb{H}^{n+2}$. By the work of Takagi and Takahashi \cite{ta}, the subgroups of $\mathbb{M}(\mathbb{S}^{n+1})$ are classified into three classes (see Theorem \ref{4-1}) according to its orbit behavior in $\mathbb{H}^{n+2}$. This helps to show that the M\"{o}bius form vanishes, hence overcomes the main technical obstacle on the way to the final proof of the Vanishing Theorem \ref{the1}. Thus the M\"{o}bius homogeneous hypersurfaces are Dupin hypersurfaces with constant M\"{o}bius curvatures, which were already classified by the first and the third authors together with Jie Qing \cite{lim3}.

We organize the paper as follows.

In section~2, we review the basic theory and facts about the M\"{o}bius transformation group of $\mathbb{S}^{n+1}$ and some examples of M\"{o}bius homogeneous hypersurfaces. In section~3, we give M\"{o}bius invariants of hypersurfaces in $\mathbb{S}^{n+1}$. In
section~4, we give the proof of the Main Classification Theorem \ref{th11}. This depends on the Vanishing Theorem \ref{the1} which claims that the M\"{o}bius form vanishes under quite general conditions for a M\"{o}bius homogeneous hypersurface.

The proof of this key result is divided into two sections. In section~5 we recall the result of Takagi and Takahashi \cite{ta} on the subgroup of the Lorentz orthogonal group acting on $\mathbb{H}^{n+2}$ and consider three different cases according to its orbit behavior. In the first two cases, it is easy to show that the M\"{o}bius form vanishes. The third case (the \emph{hyperbolic case}) needs to combine the orbit information derived in section~5 and the structure equation analysis in section~6.

\section{Examples of M\"{o}bius homogeneous hypersurfaces.}
In this section, we review some facts about the M\"{o}bius transformation group of $\mathbb{S}^{n+1}$. For details we refer to \cite{ce}, or \cite{o}.  And we present these subgroups whose orbits are the M\"{o}bius homogeneous hypersurfaces as in Example~\ref{ex01} and Example~\ref{ex02}.

Let $\mathbb{R}^{n+2}$ denote the $(n+2)$-dimensional Euclidean space, and a dot $\cdot$ represents its inner product. The $(n+1)$-dimensional sphere is
$\mathbb{S}^{n+1}=\{x\in \mathbb{R}^{n+2}|x\cdot x=1\}$. The hypersphere $S_p(\rho)$ in $\mathbb{S}^{n+1}$ with center $p\in \mathbb{S}^{n+1}$ and
radius $\rho$, is  given by
$$S_p(\rho)=\{y\in\mathbb{S}^{n+1}|p\cdot y=\cos\rho\}, ~~0<\rho<\pi.$$

Let $ \mathbb{D}^{n+2}=\{x\in\mathbb{R}^{n+2}| x\cdot x\leq1\}$. Taking  $o\in \mathbb{R}^{n+2}$ such that $o\notin \mathbb{D}^{n+2}$, a line $l$ that passes through the point $o$ intersects the sphere $\mathbb{S}^{n+1}$
in two points ${p,q}$. Now we define the M\"{o}bius inversion $\Upsilon_o$ for the point $o\notin \mathbb{D}^{n+2}\subset\mathbb{R}^{n+2}$ as follows,
$$\Upsilon_o: \mathbb{S}^{n+1}\to \mathbb{S}^{n+1},~~~\Upsilon_o(p)=q.$$
Clearly, $\Upsilon_o\in \mathbb{M}(\mathbb{S}^{n+1})$. When the point $o$ is at infinity, the M\"{o}bius inversion is indeed a reflection $\Upsilon_o\in \textup{O}(n+2)$, thus an isometric transformation of $\mathbb{S}^{n+1}$.
\begin{PROPOSITION}\cite{ce}
The M\"{o}bius transformation group $\mathbb{M}(\mathbb{S}^{n+1})$ is generated by M\"{o}bius inversions $\Upsilon_o$.
\end{PROPOSITION}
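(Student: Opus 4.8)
The plan is to reduce the statement to the classical description of the Möbius group of Euclidean space and then to realize the remaining generators explicitly as products of Möbius inversions.

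First I would reduce to transformations fixing a point. Fix $p\in\mathbb{S}^{n+1}$ and let $\phi\in\mathbb{M}(\mathbb{S}^{n+1})$. If $q:=\phi(p)\ne p$, put $o:=2q-p$; then $o\cdot o=5-4\,p\cdot q>1$ (strict since $p\ne q$), so $o\notin\mathbb{D}^{n+2}$, and the line through $p$, $q$, $o$ meets $\mathbb{S}^{n+1}$ in exactly $\{p,q\}$. Hence $\Upsilon_o$ interchanges $p$ and $q$, so $\phi_1:=\Upsilon_o\circ\phi$ fixes $p$. Since $\Upsilon_o$ is an involution, it suffices to show that every $\phi_1\in\mathbb{M}(\mathbb{S}^{n+1})$ fixing $p$ is a product of Möbius inversions (when $q=p$ there is nothing to reduce).

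Next I would transport the problem to $\widehat{\mathbb{R}}^{n+1}=\mathbb{R}^{n+1}\cup\{\infty\}$ by stereographic projection $\sigma_p$ from $p$, so that $\sigma_p(p)=\infty$. As $\sigma_p$ carries round $n$-spheres of $\mathbb{S}^{n+1}$ to spheres and hyperplanes of $\mathbb{R}^{n+1}$, conjugation by $\sigma_p$ is an isomorphism of $\mathbb{M}(\mathbb{S}^{n+1})$ onto the Möbius group of $\widehat{\mathbb{R}}^{n+1}$, and it carries each $\Upsilon_o$ to an inversion in a Euclidean sphere (a reflection in a hyperplane when $o$ is at infinity), and conversely. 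Now $\tilde\phi_1:=\sigma_p\circ\phi_1\circ\sigma_p^{-1}$ fixes $\infty$, hence permutes the spheres through $\infty$, i.e. the affine hyperplanes of $\mathbb{R}^{n+1}$; by the classical structure of Euclidean Möbius transformations (for $n\ge2$, where $\mathbb{M}=\mathbb{C}$; cf. \cite{ce,o}) such a map is a similarity, $\tilde\phi_1(u)=\lambda Au+b$ with $A\in\textup{O}(n+1)$, $\lambda>0$, $b\in\mathbb{R}^{n+1}$.

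It then remains to decompose this similarity. The orthogonal part $A$ is a product of reflections in hyperplanes through $0$ by the Cartan--Dieudonné theorem; the translation $u\mapsto u+b$ is the product of the reflections in two hyperplanes orthogonal to $b$ at distance $|b|/2$; and the homothety $u\mapsto\lambda u$ is the product of the inversions in the concentric spheres $\{|u|=1\}$ and $\{|u|=\sqrt\lambda\}$, since $u\mapsto u/|u|^2\mapsto\lambda u$. Conjugating each factor back by $\sigma_p^{-1}$ turns it into a Möbius inversion $\Upsilon_o$, so $\phi_1$, and hence $\phi=\Upsilon_o\circ\phi_1$, is a product of Möbius inversions. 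The one place where something beyond bookkeeping is needed is the middle step --- identifying the stabilizer of $\infty$ in the Euclidean Möbius group with the similarity group --- which rests on a Liouville-type theorem; the reduction in Step 1 and the three explicit inversion identities of the last step are routine.
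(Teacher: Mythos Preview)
The paper does not prove this proposition; it is quoted as a known result from Cecil's book \cite{ce}, so there is no argument to compare yours against. Your proposed proof is correct and follows the standard route: reduce to the stabilizer of a point, conjugate by stereographic projection into the M\"{o}bius group of $\widehat{\mathbb{R}}^{n+1}$, identify the stabilizer of $\infty$ with the similarity group, and decompose similarities into reflections and sphere-inversions.

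The one step you should make more explicit is the bijection you assert between the $\Upsilon_o$ and the Euclidean sphere/hyperplane inversions. For the argument to close, you need the \emph{converse} direction: that every inversion in a Euclidean sphere or reflection in a hyperplane pulls back under $\sigma_p$ to some $\Upsilon_o$. This holds because the fixed-point set of $\Upsilon_o$ is the polar hypersphere $\{x\in\mathbb{S}^{n+1}:o\cdot x=1\}$, and every round $n$-sphere in $\mathbb{S}^{n+1}$ arises this way (with $o$ at infinity for great spheres, matching the paper's remark that $\Upsilon_o\in\textup{O}(n+2)$ in that limit); since a conformal involution is determined by its fixed hypersphere, the correspondence is exact. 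With that sentence added, your sketch is complete.
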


Let $\mathbb{R}^{n+3}_1$ be the Lorentz space, i.e., $\mathbb{R}^{n+3}$ with the scalar
 product $\langle,\rangle$ defined by
\[\langle x,y\rangle=-x_0y_0+x_1y_1+\cdots+x_{n+2}y_{n+2}\] for
$x=(x_0,x_1,\cdots,x_{n+2}), y=(y_0,y_1,\cdots,y_{n+2})\in \mathbb{R}^{n+3}$.

Let $GL(\mathbb{R}^{n+3})$ be the set of invertible $(n+3)\times(n+3)$ matrix, then the Lorentz orthogonal group $\textup{O(n+2,1)}$ is defined by
$$\textup{O(n+2,1)}=\{T\in GL(\mathbb{R}^{n+3})~|~ TI_1T^t=I_1\},$$
where $T^t$ denotes the transpose of the matrix $T$ and $I_1=\left(\begin{matrix}-1&0\\
0&I\end{matrix}\right),$ and $I$ is the  $(n+2)\times(n+2)$ unit matrix.

The positive light cone is
$$\mathbb{C}^{n+2}_+=\{y=(y_0,\vec{y}_1)\in \mathbb{R}\times \mathbb{R}^{n+2}=\mathbb{R}^{n+3}_1~|~\langle y,y\rangle=0, ~y_0>0\},$$
and $\textup{O}^+(n+2,1)$ is the subgroup of $\textup{O(n+2,1)}$ defined by
$$\textup{O}^+(n+2,1)=\{T\in \textup{O(n+2,1)}~|~ T(C^{n+2}_+)=C^{n+2}_+\}.$$
\begin{PROPOSITION}[\cite{o}]
Let  $T=\left(\begin{matrix}w&u\\
v&Q\end{matrix}\right)\in \textup{O(n+2,1)}$,
where $Q$ is an $(n+2)\times(n+2)$ matrix. then
$T\in \textup{O}^+(n+2,1)$ if and only if $w>0.$
\end{PROPOSITION}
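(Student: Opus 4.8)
The plan is to reduce the statement to a single scalar relation read off from the defining identity $TI_1T^t=I_1$, and then to track the sign of the zeroth coordinate of $Ty$ as $y$ ranges over the light cone.

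First I would use the block form $T=\left(\begin{smallmatrix}w&u\\ v&Q\end{smallmatrix}\right)$, with $u$ a row vector and $v$ a column vector of length $n+2$, and compute the $(0,0)$-block of $TI_1T^t$. Comparing with the $(0,0)$-entry of $I_1$ yields $-w^2+uu^t=-1$, that is $w^2=1+|u|^2$. In particular $w\neq 0$, indeed $|w|\ge 1$, and — the point that makes the argument work — $|w|>|u|$.

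Next, take an arbitrary $y=(y_0,\vec{y}_1)\in\mathbb{C}^{n+2}_+$, so $y_0>0$ and $|\vec{y}_1|=y_0$. Since $T\in\textup{O}(n+2,1)$ is invertible and preserves $\langle\,,\,\rangle$, the image $Ty$ is again a nonzero null vector, hence it lies in $\mathbb{C}^{n+2}_+$ if and only if its zeroth coordinate is positive. That coordinate equals $wy_0+u\vec{y}_1$; writing $\vec{y}_1=y_0\vec{e}$ with $|\vec{e}|=1$, it equals $y_0\,(w+u\vec{e})$, and $|u\vec{e}|\le|u|<|w|$ forces $w+u\vec{e}$ to have the same (nonzero) sign as $w$. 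Thus for every $y\in\mathbb{C}^{n+2}_+$ the zeroth coordinate of $Ty$ has the sign of $w$. This settles one direction at once: if $T\in\textup{O}^+(n+2,1)$, then applying $T$ to any $y\in\mathbb{C}^{n+2}_+$ forces $w>0$; and if $w>0$ then $T(\mathbb{C}^{n+2}_+)\subseteq\mathbb{C}^{n+2}_+$.

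To promote this inclusion to an equality when $w>0$, I would apply the same reasoning to $T^{-1}$. From $TI_1T^t=I_1$ together with $I_1^2=I$ one gets $T^{-1}=I_1T^tI_1$, whose $(0,0)$-entry is $(T^t)_{00}=w>0$; hence $T^{-1}(\mathbb{C}^{n+2}_+)\subseteq\mathbb{C}^{n+2}_+$, i.e. $\mathbb{C}^{n+2}_+\subseteq T(\mathbb{C}^{n+2}_+)$, and equality follows. I do not anticipate a genuine obstacle here: the only delicate point is the strict inequality $|w|>|u|$, which is precisely what decouples the sign of $w+u\vec{e}$ from the choice of unit vector $\vec{e}$ and thereby lets a single number govern the global action of $T$ on the light cone. (A less computational alternative would be to observe that the set of nonzero null vectors has exactly two connected components, which $T$ either preserves or interchanges, and then to evaluate $T$ on the future-pointing timelike vector $(1,0,\dots,0)$, whose image has zeroth entry $w$.)
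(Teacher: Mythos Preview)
Your argument is correct. The key identity $w^2=1+|u|^2$ from the $(0,0)$-block of $TI_1T^t=I_1$ gives $|w|>|u|$, and your sign analysis of $(Ty)_0=y_0(w+u\vec{e})$ is clean; the passage to $T^{-1}=I_1T^tI_1$ to upgrade the inclusion to an equality is also right, since the $(0,0)$-entry is again $w$.

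As for comparison: the paper does \emph{not} supply its own proof of this proposition. It is stated with a reference to O'Neill's \textit{Semi-Riemannian Geometry} and used as a known fact. So there is no argument in the paper to compare yours against; you have simply filled in a proof that the authors chose to cite rather than reproduce. Your approach is the standard one (and essentially what one finds in O'Neill), and the parenthetical alternative you mention---using connectedness of the two halves of the null cone and testing on $(1,0,\dots,0)$---is an equally valid and slightly more conceptual route.
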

It is well-known that the subgroup $\textup{O}^+(n+2,1)$
is isomorphic to the M\"{o}bius transformation group $\mathbb{M}(\mathbb{S}^{n+1})$. In fact, for any $$
T=\left(\begin{matrix}w&u\\
v&Q\end{matrix}\right)\in \textup{O}^+(n+2,1),
$$
we can define the M\"{o}bius transformation $\Psi(T):\mathbb{S}^{n+1}\mapsto \mathbb{S}^{n+1}$ by
\[\Psi(T)(x)=\frac{Qx+v}{u x+w}, ~~~x=(x_1,\cdots,x_{n+2})^t\in \mathbb{S}^{n+1}.\]
Then the map $\Psi:\textup{O}^+(n+2,1)\mapsto \mathbb{M}(\mathbb{S}^{n+1})$ is a group isomorphism.

Let $Q\in \textup{O}(n+2)$ be an isometric transformation of $\mathbb{S}^{n+1}$, then $Q\in \mathbb{M}(\mathbb{S}^{n+1})$ and
$$\Psi^{-1}(Q)=\left(\begin{matrix}1&0\\
0&Q\end{matrix}\right)\in \textup{O}^+(n+2,1).$$ Thus $\Psi^{-1}(\textup{O}(n+2))\subset \textup{O}^+(n+2,1)$ is a subgroup.

The $(n+1)$-dimensional sphere
$\mathbb{S}^{n+1}$
 is diffeomorphic to the projective light cone $PC^{n+1}$,
$$PC^{n+1}=\{[z]\in PR^{n+3}~|~z\in\mathbb{C}^{n+2}_+\}.$$
The diffeomorphism $\Phi:\mathbb{S}^{n+1}\to PC^{n+1}$ is given by $$\Phi(x)=[y]=[(1,x)].$$
The group $\textup{O}^+(n+2,1)$ acts on $PC^{n+1}$ by $$T[p]=[Tp],~T\in \textup{O}^+(n+2,1),~~ [p]\in PC^{n+1}.$$

Let $x: M^n\mapsto \mathbb{S}^{n+1}$ be a hypersurface without umbilical points.
Let $II$ and $H$ be the second fundamental form and
the mean curvature of $x$, respectively.
The M\"{o}bius position vector of $x$
$$Y: M^n\mapsto \mathbb{C}^{n+2}_+$$  is defined by
$$Y=\rho(1,x), ~~~~~\rho^2=\frac{n}{n-1}(\|II\|^2-nH^2).$$
Thus for a hypersurface $x: M^n\mapsto \mathbb{S}^{n+1}$ without umbilical point, by the diffeomorphism $PC^{n+1}\approx \mathbb{S}^{n+1}$ we have $$x(M^n)=[Y(M^n)]\subset PC^{n+1}.$$
And the M\"{o}bius position vector of $x$ define a submanifold $Y(M^n)$ in $\mathbb{C}^{n+2}_+\subset\mathbb{R}^{n+3}_1.$
\begin{Theorem}(\cite{w})\label{theorem-1}
Two hypersurfaces $x,\tilde{x}: M^n\mapsto \mathbb{S}^{n+1}$ are M\"{o}bius
equivalent if and only if there exists $T\in \textup{O}^+(n+2,1)$  such that $\tilde{Y}=YT.$
\end{Theorem}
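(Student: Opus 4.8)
The statement is an equivalence, so the plan is to treat the two implications separately. The backward implication turns out to be an essentially formal computation inside the light cone, while the forward implication rests on a single geometric input, the \emph{invariance of the M\"{o}bius metric} $g=\langle dY,dY\rangle$. A direct computation using $x\cdot x=1$ shows $\langle (1,x),(1,x)\rangle=0$ and $\langle(1,x),(0,dx)\rangle=0$, so that with $Y=\rho(1,x)$ one has $g=\langle dY,dY\rangle=\rho^2(dx\cdot dx)$. The key lemma I would establish first is that this $g$ is unchanged under M\"{o}bius transformations of $x$. I would prove it by reducing to the generators $\Upsilon_o$ and invoking the conformal transformation law of the trace-free second fundamental form $\mathring{II}=II-Hg_M$: if $\tilde x=\phi\circ x$ with the induced metrics related by $d\tilde x\cdot d\tilde x=e^{2\tau}(dx\cdot dx)$, then $\tilde{\mathring{II}}=e^{\tau}\mathring{II}$, whence $\|\widetilde{II}\|^2-n\tilde H^2=e^{-2\tau}(\|II\|^2-nH^2)$, i.e. $\tilde\rho^2=e^{-2\tau}\rho^2$, and therefore $\tilde g=\tilde\rho^2(d\tilde x\cdot d\tilde x)=\rho^2(dx\cdot dx)=g$.

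For the forward direction, assume $x$ and $\tilde x$ are M\"{o}bius equivalent, say $\tilde x=\phi\circ x$ with $\phi=\Psi(S)$, $S\in\textup{O}^+(n+2,1)$. Passing to the light cone via $\Phi(x)=[(1,x)]$ and the action $T[p]=[Tp]$, the equivalence yields a relation of the form $\tilde Y=f\,(YT)$ for a suitable $T\in\textup{O}^+(n+2,1)$ obtained from $S$ and a positive function $f$ on $M^n$ (positivity of $\rho$ being guaranteed by the absence of umbilic points). Since $TI_1T^t=I_1$, right multiplication by $T$ is a Lorentz isometry, so $\langle d(YT),d(YT)\rangle=\langle dY,dY\rangle=g$. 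Moreover $\langle YT,YT\rangle=\langle Y,Y\rangle=0$ identically, and differentiating gives $\langle YT,d(YT)\rangle=0$; hence in expanding $d\tilde Y=df\,(YT)+f\,d(YT)$ the terms carrying $df$ drop out, leaving
\[
\tilde g=\langle d\tilde Y,d\tilde Y\rangle=f^2\langle d(YT),d(YT)\rangle=f^2 g.
\]
By the invariance lemma $\tilde g=g$, so $f^2\equiv1$, and positivity forces $f\equiv1$. Therefore $\tilde Y=YT$ with $T\in\textup{O}^+(n+2,1)$, as required.

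For the backward direction, assume $\tilde Y=YT$ for some $T\in\textup{O}^+(n+2,1)$. Reading off the coordinates of $\tilde Y=\tilde\rho(1,\tilde x)$ from those of $YT=\rho(1,x)T$ determines $\tilde x$ as a fractional-linear expression in $x$ with coefficients from the blocks of $T$; comparing with the explicit formula for $\Psi$, this expression is exactly the image of $x$ under the M\"{o}bius transformation associated to $T$ (up to the transpose that relates right multiplication on row vectors to left multiplication on column vectors: right multiplication by $T$ realizes $\Psi(T^t)$, which still ranges over all of $\mathbb{M}(\mathbb{S}^{n+1})$ as $T$ ranges over $\textup{O}^+(n+2,1)$). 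Thus $\tilde x=\phi\circ x$ for a genuine M\"{o}bius transformation $\phi$, so $x$ and $\tilde x$ are M\"{o}bius equivalent; here no use of the factor $f$ is needed, since the claim follows at once from the normalized first coordinate.

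I expect the only nontrivial obstacle to be the invariance lemma of the first paragraph, and within it the precise verification of the conformal law $\tilde{\mathring{II}}=e^{\tau}\mathring{II}$ together with the conformal factor produced by a single inversion $\Upsilon_o$ --- that is, the bookkeeping that upgrades ``$\phi$ is conformal'' to the exact scaling $\tilde\rho^2=e^{-2\tau}\rho^2$. Once the invariance of $g$ is secured, the light-cone computation collapsing the unknown factor $f$ to $1$ is short, and the converse is immediate; the remaining care is purely notational, namely keeping the row/column and transpose conventions consistent throughout.
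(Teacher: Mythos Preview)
The paper does not supply its own proof of this theorem: it is quoted from \cite{w} and used as a black box. So there is no in-paper argument to compare against.

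On its own merits, your proof is the standard one and is correct. The essential content is exactly what you isolate: the M\"{o}bius invariance of $g=\rho^2\,dx\cdot dx$, which follows from the conformal transformation law $\tilde{\mathring{II}}=e^{\tau}\mathring{II}$ for the trace-free second fundamental form under a conformal change $d\tilde x\cdot d\tilde x=e^{2\tau}dx\cdot dx$. Once that is in hand, the light-cone computation $\tilde g=f^2 g$ together with $\tilde g=g$ forces $f\equiv1$, and the converse is immediate from the projective description of $\Psi$. Your remark about the row/column transpose convention is well taken; since $\textup{O}^+(n+2,1)$ is closed under transposition this causes no loss.
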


Let $f:M^{n}\rightarrow \mathbb{R}^{n+1}$ be a hypersurface without umbilical points, using the conformal map $\sigma$ one also defines the M\"{o}bius position vector of $f$.
Let $II$ and $H$ be the second fundamental form and the mean curvature of $f$, respectively.
As in \cite{lim3,w},
the M\"{o}bius position vector of $f$ can be defined by
$$Y=\rho(f)\left(\frac{1+|f|^2}{2},\frac{1-|f|^2}{2},f\right): M^n\rightarrow \mathbb{C}^{n+2}_+
$$
where $(\rho (f))^2=\frac{n}{n-1}(|II|^2-nH^2)$.

Let $G$ be a subgroup of $\mathbb{M}(\mathbb{S}^{n+1})$. For any point $p\in \mathbb{S}^{n+1}$, the orbit of $G$ through $p$ is
$$G\cdot p=\{\phi(p)|\phi\in G\}.$$

\begin{Definition}
A hypersurface $x:M^n\to \mathbb{S}^{n+1}$ is called a M\"{o}bius homogeneous hypersurface in $\mathbb{S}^{n+1}$ if there exists a subgroup $G\subset\mathbb{M}(\mathbb{S}^{n+1})$ such that the orbit $x(M^n)=G\cdot p,~ p\in x(M^n)$.
\end{Definition}
It is convenient to construct a M\"{o}bius homogeneous hypersurface by a subgroup of $\textup{O}^+(n+2,1)$, whose action on $\mathbb{R}^{n+3}_1$ is linear. By Theorem \ref{theorem-1}, we have the following proposition,
\begin{PROPOSITION}\label{x-y-1}
A hypersurface $x:M^n\to \mathbb{S}^{n+1}$ is a M\"{o}bius homogeneous hypersurface in $\mathbb{S}^{n+1}$ if and only if there exists a subgroup $G\subset\textup{O}^+(n+2,1)$ such that the orbit $G\cdot p=Y(M^n),p\in \mathbb{C}^{n+2}_+ $, that is,  $Y(M^n)$ is a homogeneous submanifold in $\mathbb{R}^{n+3}_1$.
\end{PROPOSITION}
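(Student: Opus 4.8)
The plan is to read this proposition as a dictionary between the nonlinear action of $\mathbb{M}(\mathbb{S}^{n+1})$ on $\mathbb{S}^{n+1}$ and the linear action of $\textup{O}^+(n+2,1)$ on the light cone $\mathbb{C}^{n+2}_+\subset\mathbb{R}^{n+3}_1$, and to prove it by transferring transitivity and invariance across the group isomorphism $\Psi$, the diffeomorphism $\Phi:\mathbb{S}^{n+1}\to PC^{n+1}$, and Theorem \ref{theorem-1}. The single nontrivial ingredient I would isolate first is the \emph{equivariance} of the M\"obius position vector: for $\phi\in\mathbb{M}(\mathbb{S}^{n+1})$ the hypersurface $\phi\circ x$ (which has no umbilic points, like $x$, so that its M\"obius position vector is defined) has M\"obius position vector $Y\,T_\phi$, where $T_\phi\in\textup{O}^+(n+2,1)$ agrees with $\Psi^{-1}(\phi)$ up to the chosen left/right conventions; in particular, as $\phi$ ranges over a subgroup $G$, the elements $T_\phi$ run over the subgroup $\Psi^{-1}(G)$. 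This is the form of Theorem \ref{theorem-1} that the conformally invariant normalization $\rho^2=\tfrac{n}{n-1}(\|II\|^2-nH^2)$ is built to deliver, and I would record it (assuming $x$ an embedding, or else working on a neighborhood of each point) before doing anything else.

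For the ``only if'' direction I would argue as follows. Suppose $x(M^n)=G\cdot p_0$ with $G\subset\mathbb{M}(\mathbb{S}^{n+1})$ a subgroup and $p_0\in x(M^n)$, and fix $q_0\in M^n$ with $x(q_0)=p_0$. For $\phi\in G$ one has $\phi(x(M^n))=x(M^n)$, so $\psi_\phi:=x^{-1}\circ\phi\circ x$ is a diffeomorphism of $M^n$ with $\phi\circ x=x\circ\psi_\phi$. Since $Y$ is natural under reparametrization, the M\"obius position vector of $\phi\circ x=x\circ\psi_\phi$ equals $Y\circ\psi_\phi$, and comparing with the equivariance above gives $Y\circ\psi_\phi=Y\,T_\phi$. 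Passing to images, $Y(M^n)=(Y\circ\psi_\phi)(M^n)=Y(M^n)\,T_\phi$, so $Y(M^n)$ is invariant under $\widetilde G:=\Psi^{-1}(G)\subset\textup{O}^+(n+2,1)$. Transitivity transfers too: if $q_1,q_2\in M^n$, then $x(q_1)$ and $x(q_2)$ lie in the one orbit $G\cdot p_0$, so $x(q_2)=\phi(x(q_1))$ for some $\phi\in G$, whence $\psi_\phi(q_1)=q_2$ and $Y(q_2)=Y(q_1)\,T_\phi$. Thus $Y(M^n)=\widetilde G\cdot Y(q_0)$ with $Y(q_0)\in\mathbb{C}^{n+2}_+$, i.e.\ $Y(M^n)$ is homogeneous in $\mathbb{R}^{n+3}_1$.

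For the ``if'' direction I would run the same argument backwards. Suppose $G\cdot p=Y(M^n)$ with $G\subset\textup{O}^+(n+2,1)$ a subgroup and $p\in\mathbb{C}^{n+2}_+$. Using the compatibility $\Phi\circ\Psi(T)=T\circ\Phi$, the $G$-action on $Y(M^n)\subset\mathbb{C}^{n+2}_+$ descends under $z\mapsto[z]$ to an action of $\overline G:=\Psi(G)\subset\mathbb{M}(\mathbb{S}^{n+1})$ on $[Y(M^n)]=x(M^n)$, and an element of $G$ carrying one point of $Y(M^n)$ to another induces an element of $\overline G$ carrying the corresponding point of $x(M^n)$ to the other. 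Hence transitivity of $G$ on $Y(M^n)$ gives transitivity of $\overline G$ on $x(M^n)$, so $x(M^n)=\overline G\cdot[p]$ with $[p]\in x(M^n)$, and $x$ is M\"obius homogeneous.

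The main obstacle, and really the only place where something must be verified rather than merely transported, is the equivariance statement for $Y$: one must confirm that applying a M\"obius transformation to $x$ changes $Y$ by a fixed linear map of $\mathbb{R}^{n+3}_1$, not just by passage to some M\"obius-equivalent vector as in the bare statement of Theorem \ref{theorem-1}, and one must be slightly careful with the reparametrization identity $\phi\circ x=x\circ\psi_\phi$ (embedding, or a local statement). Once those are in place, the proposition follows formally from the isomorphism $\Psi$ and Theorem \ref{theorem-1}.
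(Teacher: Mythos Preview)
Your proposal is correct and follows the same route the paper intends: the paper states this proposition as an immediate consequence of Theorem~\ref{theorem-1} (``By Theorem~\ref{theorem-1}, we have the following proposition'') and gives no further proof, while you have simply written out the implicit argument in detail, including the equivariance of $Y$ under $\Psi^{-1}(G)$ and the passage between the actions via $\Phi$ and $\Psi$. There is no essential difference in method.
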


Standard examples of M\"{o}bius homogeneous hypersurfaces in $\mathbb{S}^{n+1}$ are  the isometric homogeneous hypersurfaces in space forms.  An isometric homogeneous hypersurface in $\mathbb{R}^{n+1}$ is a hyperplane, or a hypersphere, or one of the following cylinders:
$$
f:\mathbb{R}^m\times \mathbb{S}^{n-m}(r)\to \mathbb{R}^{n+1},~~1\leq m\leq n-1.
$$
An isometric homogeneous hypersurface in $\mathbb{H}^{n+1}$ is a hypersphere, or a horosphere, a hyperbolic hyperplane, or one of the following hyperbolic cylinders:
$$
f:\mathbb{H}^m(\sqrt{1+r^2})\times \mathbb{S}^{n-m}(r)\to \mathbb{H}^{n+1},~~1\leq m\leq n-1.
$$

\begin{PROPOSITION}\label{pro-201}
The image of $\sigma$ of the cylinder $\sigma\circ f(\mathbb{R}^m\times \mathbb{S}^{n-m}(r)), 1\leq m\leq n-1,$ is a M\"{o}bius homogeneous hypersurfaces in $\mathbb{S}^{n+1}$.
\end{PROPOSITION}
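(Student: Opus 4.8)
The plan is to apply the criterion of Proposition~\ref{x-y-1}: it suffices to compute the M\"{o}bius position vector $Y$ of the cylinder and to exhibit a closed subgroup $G\subset\textup{O}^+(n+2,1)$ with $G\cdot p=Y(M^n)$ for one (hence every) point $p\in Y(M^n)\subset\mathbb{C}^{n+2}_+$. Write $f(u,v)=(u,rv)$ with $u\in\mathbb{R}^m$ and $v\in\mathbb{S}^{n-m}(1)\subset\mathbb{R}^{n-m+1}$, so that $|f|^2=|u|^2+r^2$. The principal curvatures of $f$ are $0$ with multiplicity $m$ and $1/r$ with multiplicity $n-m$; since $1\le m\le n-1$ there are two distinct ones, so $f$ has no umbilic point (and neither does $\sigma\circ f$, umbilicity being a conformal invariant). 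A direct computation gives $|II|^2-nH^2=\frac{m(n-m)}{nr^2}$, a positive constant, hence $\rho(f)\equiv\rho_0:=\bigl(\frac{m(n-m)}{(n-1)r^2}\bigr)^{1/2}$ and
\[
Y=\rho_0\left(\tfrac{1+|u|^2+r^2}{2},\ \tfrac{1-|u|^2-r^2}{2},\ u,\ rv\right).
\]

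Next I would split $\mathbb{R}^{n+3}_1=\mathbb{R}e_0\oplus\mathbb{R}e_1\oplus\mathbb{R}^m\oplus\mathbb{R}^{n-m+1}$, writing a general vector as $z=(z_0,z_1,\zeta,\eta)$ according to the four blocks of $Y$, and take $G=G_1\times G_2$. Here $G_2=\textup{O}(n-m+1)$ acts on the last block $\eta$ and fixes the rest, so $G_2\subset\Psi^{-1}(\textup{O}(n+2))\subset\textup{O}^+(n+2,1)$ and it acts transitively on the $\mathbb{S}^{n-m}(r)$-slot of $Y$. And $G_1=\{T_a:a\in\mathbb{R}^m\}$ is the parabolic (unipotent) subgroup fixing the null line $\mathbb{R}(e_0-e_1)$ --- the point at infinity of $\mathbb{R}^{n+1}$ --- given by
\[
T_a(z_0,z_1,\zeta,\eta)=\left(z_0+a\cdot\zeta+\tfrac{|a|^2}{2}(z_0+z_1),\ z_1-a\cdot\zeta-\tfrac{|a|^2}{2}(z_0+z_1),\ \zeta+a(z_0+z_1),\ \eta\right).
\]
Routine checks show $\langle T_az,T_az\rangle=\langle z,z\rangle$, $T_aT_{a'}=T_{a+a'}$, and that its $(0,0)$-entry $1+\tfrac{|a|^2}{2}$ is positive, so $G_1\cong(\mathbb{R}^m,+)$ lies in $\textup{O}^+(n+2,1)$; indeed $\Psi(T_a)$ is the Euclidean translation $x\mapsto x+a$ of $\mathbb{R}^{n+1}$ along the $\mathbb{R}^m$-axis. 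Since $G_1$ and $G_2$ act on disjoint coordinate blocks they commute, so $G=G_1\times G_2$ is a closed subgroup of $\textup{O}^+(n+2,1)$.

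Finally I would check transitivity on $Y(M^n)$. Take the base point $p=Y(0,rv_0)=\rho_0\bigl(\tfrac{1+r^2}{2},\tfrac{1-r^2}{2},0,rv_0\bigr)\in\mathbb{C}^{n+2}_+$; since $z_0+z_1=\rho_0$ at $p$, a one-line substitution gives $T_a\,p=Y(a,rv_0)$ for every $a\in\mathbb{R}^m$, and then $R\,T_a\,p=Y(a,rRv_0)$ for $R\in\textup{O}(n-m+1)$. As $(a,Rv_0)$ runs over $\mathbb{R}^m\times\mathbb{S}^{n-m}(1)=M^n$ this yields $G\cdot p=Y(M^n)$, and Proposition~\ref{x-y-1} then shows that $\sigma\circ f$ is a M\"{o}bius homogeneous hypersurface. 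Conceptually $G$ is merely the realization in $\textup{O}^+(n+2,1)$ of the subgroup of $\textup{Isom}(\mathbb{R}^{n+1})$, acting on $\mathbb{S}^{n+1}$ via $\sigma$, that is generated by translations along the $\mathbb{R}^m$-axis and rotations of the spherical factor, and that subgroup already acts transitively on the cylinder. The only points that require care are the constancy of $\rho$ and the explicit form of $T_a$ together with the check $T_a\in\textup{O}^+(n+2,1)$; I do not anticipate any genuine obstacle.
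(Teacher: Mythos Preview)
Your proposal is correct and follows essentially the same approach as the paper: both exhibit the group $G=G_1\times G_2$ with $G_1$ the parabolic subgroup of $\textup{O}^+(n+2,1)$ realizing the Euclidean translations along the $\mathbb{R}^m$-axis and $G_2=\textup{O}(n-m+1)$ rotating the spherical factor, and then observe that the orbit through a suitable base point is the cylinder. The paper simply writes down the matrix group $G_r$ and the base point (after normalizing $r=1$ by a similarity) and asserts the orbit is $[Y(\mathbb{R}^m\times\mathbb{S}^{n-m})]$, whereas you supply the supporting details---the computation of $\rho$, the explicit verification that $T_a\in\textup{O}^+(n+2,1)$, and the transitivity check---that the paper leaves implicit.
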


\begin{proof}
Up to a similarity, we only consider the cylinder $\sigma\circ f(\mathbb{R}^m\times \mathbb{S}^{n-m}), 1\leq m\leq n-1.$
Let $u=(u_1,\cdots,u_m)\in \mathbb{R}^m$,
 \[G_r=\left\{\left(\begin{array}{cccccc}
1+\frac{|u|^2}{2}&-\frac{|u|^2}{2} &u_1 &\cdots&u_{m}&0\\
\frac{|u|^2}{2}&1-\frac{|u|^2}{2} &u_1 &\cdots&u_{m}&0\\
u_1&-u_1 &1 &\cdots&0&0\\
 \vdots&\vdots&\vdots &\ddots&\vdots&\vdots\\
u_{m}&-u_{m}&0&\cdots&1 &0\\
0&0&0&\cdots&0&\textup{O}(n-m+1)
\end{array}\right)\right\}.
\] Then $G_r$ is a subgroup of $\textup{O}^+(n+2,1)$ and $[Y(\mathbb{R}^m\times \mathbb{S}^{n-m}))]$ is the orbit of
$G_r$ acting on the point $p=(\underbrace{1,0,\cdots,0}_{m+2},1,0,\cdots,0)\in \mathbb{C}^{n+2}_+$.
\end{proof}

\begin{PROPOSITION}\label{pro-202}
The image of $\tau$ of the hyperbolic cylinder
$$
\tau\circ f(\mathbb{H}^m(\sqrt{1+r^2})\times \mathbb{S}^{n-m}(r)), 1\leq m\leq n-1
$$
is a M\"{o}bius homogeneous hypersurfaces in $\mathbb{S}^{n+1}$.
\end{PROPOSITION}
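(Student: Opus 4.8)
\section*{Proof proposal for Proposition \ref{pro-202}}

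The plan is to follow the template of Proposition~\ref{pro-201}: realize the hyperbolic cylinder as the orbit of a reductive isometry subgroup of $\mathbb{H}^{n+1}$, and then transport that subgroup into $\textup{O}^+(n+2,1)$ by means of the light-cone lift attached to $\tau$. First I would record the lift formula. For $y=(y_0,\vec{y}_1)\in\mathbb{H}^{n+1}$ one has
$$
\Phi(\tau(y))=[(1,\tau(y))]=\Big[\Big(1,\tfrac{1}{y_0},\tfrac{\vec{y}_1}{y_0}\Big)\Big]=[(y_0,1,\vec{y}_1)],
\qquad
\langle(y_0,1,\vec{y}_1),(y_0,1,\vec{y}_1)\rangle=-y_0^2+1+|\vec{y}_1|^2=0,
$$
and the time-coordinate $y_0>0$, so $(y_0,1,\vec{y}_1)\in\mathbb{C}^{n+2}_+$. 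In particular, for $f:M^n\to\mathbb{H}^{n+1}$ we get $(\tau\circ f)(M^n)=[\{(f_0,1,\vec{f}_1)\}]\subset PC^{n+1}$; and any isometry of $\mathbb{H}^{n+1}$, i.e.\ any element of $\textup{O}^+(n+1,1)$ acting on the $(y_0,\vec{y}_1)$-coordinates, extends to the element of $\textup{O}^+(n+2,1)$ that additionally fixes the coordinate $y_1=1$.

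Next I would split the ambient Lorentz space as $\mathbb{R}^{m+1}_1\oplus\mathbb{R}^{n-m+1}$ (Lorentzian $\oplus$ Euclidean) and write $f=(z,w)$ accordingly; then $\mathbb{H}^m(\sqrt{1+r^2})\times\mathbb{S}^{n-m}(r)$ is exactly $\{(z,w):\langle z,z\rangle=-(1+r^2),\ z_0>0,\ |w|^2=r^2\}$, which is the orbit of $\textup{O}^+(m,1)\times\textup{O}(n-m+1)$ acting block-diagonally through the point $(\sqrt{1+r^2},0,\dots,0;\,r,0,\dots,0)$ --- here one uses transitivity of $\textup{O}^+(m,1)$ on the upper sheet of the hyperboloid and of $\textup{O}(n-m+1)$ on the round sphere, which is where the hypotheses $m\geq1$ and $n-m\geq1$ enter. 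Combining the two paragraphs and reordering the coordinates of $\mathbb{R}^{n+3}_1$ as $(z_0,z_1,\dots,z_m,\,1,\,w_1,\dots,w_{n-m+1})$, the relevant subgroup is
$$
G=\left\{\begin{pmatrix}A&0&0\\0&1&0\\0&0&B\end{pmatrix}\ \Bigg|\ A\in\textup{O}^+(m,1),\ B\in\textup{O}(n-m+1)\right\},
$$
and I would check directly that each such matrix preserves $\langle\,,\,\rangle$ and has positive upper-left entry $A_{00}$, hence lies in $\textup{O}^+(n+2,1)$ and preserves $\mathbb{C}^{n+2}_+$ (by the Proposition characterizing $\textup{O}^+(n+2,1)$).

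Finally I would take $p=(\sqrt{1+r^2},0,\dots,0,\,1,\,r,0,\dots,0)\in\mathbb{C}^{n+2}_+$ (one checks $\langle p,p\rangle=-(1+r^2)+1+r^2=0$ and the time-coordinate is positive) and compute that the orbit of $G$ through $p$ is $\{(z,1,w):z\in\mathbb{H}^m(\sqrt{1+r^2}),\,w\in\mathbb{S}^{n-m}(r)\}$, which after projectivization is precisely $[Y(\mathbb{H}^m(\sqrt{1+r^2})\times\mathbb{S}^{n-m}(r))]=(\tau\circ f)(M^n)$ under $PC^{n+1}\cong\mathbb{S}^{n+1}$. By Proposition~\ref{x-y-1} (equivalently Theorem~\ref{theorem-1}) this shows $\tau\circ f$ is a M\"obius homogeneous hypersurface in $\mathbb{S}^{n+1}$. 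There is no real obstacle here: the mathematical content is just that the hyperbolic cylinder is an orbit of a reductive isometry subgroup of $\mathbb{H}^{n+1}$ and that $\textup{Isom}(\mathbb{H}^{n+1})$ sits inside $\mathbb{M}(\mathbb{S}^{n+1})$ via $\tau$. The only care needed is bookkeeping --- keeping the coordinate ordering of $\mathbb{R}^{n+3}_1$ consistent between the lift $y\mapsto(y_0,1,\vec{y}_1)$ and the block form of $G$, and selecting the time-orientation-preserving part of $\textup{O}(m,1)$ so that exactly the sheet $z_0>0$ (hence $\mathbb{H}^m$) is swept out.
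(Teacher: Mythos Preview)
Your proposal is correct and takes essentially the same approach as the paper: both exhibit the block-diagonal subgroup $G=\textup{O}^+(m,1)\times\{1\}\times\textup{O}(n-m+1)\subset\textup{O}^+(n+2,1)$ and identify the hyperbolic cylinder (via the $\tau$-lift into the light cone) as a $G$-orbit through an explicit null vector. Your write-up is in fact more detailed than the paper's---you spell out the lift $\Phi(\tau(y))=[(y_0,1,\vec{y}_1)]$, keep the radii $\sqrt{1+r^2}$ and $r$ in the base point, and note the coordinate reordering needed to put $G$ in block form---whereas the paper simply records the group $G_h$ and the point $p$ without further comment.
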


\begin{proof}
Let
\[ G_h=\left\{\left(\begin{array}{cccc}
\textup{O}^+(m,1)&0 & 0&0\\
 0&1&0 &0 \\
0 & 0&0 &\textup{O}(n-m+1)
\end{array}\right)\right\}.
\]
 Then $G_h$ is a subgroup of $\textup{O}^+(n+2,1)$ and $[Y(\mathbb{H}^m(\sqrt{1+r^2})\times \mathbb{S}^{n-m}(r))]$ is the orbit of
$G_h$ acting on the point $p=(\underbrace{1,0,\cdots,0}_{m+2},1,0,\cdots,0)\in \mathbb{C}^{n+2}_+$.
\end{proof}

The isometric homogeneous hypersurfaces in $\mathbb{S}^{n+1}$ are M\"{o}bius homogeneous.
Due to Hsiang-Lawson \cite{Xiang} and Takagi-Takahashi \cite{ta}, the isometric homogeneous  hypersurfaces in $\mathbb{S}^{n+1}$ are classified.

\begin{Theorem}\label{isom}\cite{Xiang,ta}
Every homogeneous hypersurface in $\mathbb{S}^{n+1}$ can be
obtained as a principal orbit of a linear isotropy representation of a compact Riemannian symmetric
pair $(U,K)$ of rank two.
\end{Theorem}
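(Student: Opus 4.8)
The plan is to reprove the classical classification of Hsiang--Lawson \cite{Xiang} and Takagi--Takahashi \cite{ta}; I only outline the strategy. First I would observe that an isometric homogeneous hypersurface $x:M^n\to\mathbb{S}^{n+1}$ is an orbit $M^n=G\cdot p$ of a closed connected subgroup $G\subseteq\mathrm{SO}(n+2)$, and that since $M^n$ has codimension one in $\mathbb{S}^{n+1}$ the action of $G$ on $\mathbb{S}^{n+1}$ has cohomogeneity one, with $M^n$ a principal orbit. Equivalently, the induced orthogonal representation $G\hookrightarrow\mathrm{O}(n+2)$ on $\mathbb{R}^{n+2}$ has cohomogeneity two. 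Since $G$ acts transitively on $M^n$ by isometries preserving the unit normal up to sign, all shape-operator eigenvalues are constant on $M^n$ and the parallel hypersurfaces are again $G$-orbits; hence $M^n$ is an isoparametric hypersurface, and by M\"unzner \cite{mu1,mu2} it has $g\in\{1,2,3,4,6\}$ distinct constant principal curvatures --- information I would keep in reserve for the case analysis.

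The core step is to identify the $G$-action on $\mathbb{R}^{n+2}$, up to orbit equivalence, with the linear isotropy representation (an ``$s$-representation'') of a compact Riemannian symmetric pair $(U,K)$. After splitting off the subspace fixed by $G$ and reducing to the effective, indecomposable part of the representation, I would invoke the Hsiang--Lawson analysis of cohomogeneity-one isometric actions on spheres: the slice representation at a point of a singular orbit, combined with the classification of transitive actions on spheres, pins down $G$ case by case and exhibits the action as orbit equivalent to an $s$-representation. The rank enters through the identity that the cohomogeneity of an $s$-representation of $U/K$ equals $\mathrm{rank}(U/K)$, so cohomogeneity two forces $\mathrm{rank}(U/K)=2$. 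Conversely, for any compact rank-two symmetric pair the isotropy representation is polar of cohomogeneity two, all its orbits lie on concentric spheres of $\mathbb{R}^{n+2}$, and the principal orbits meet the unit sphere in isometric homogeneous hypersurfaces of $\mathbb{S}^{n+1}$ --- so the two classes coincide.

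Finally, to make the classification explicit I would follow Takagi--Takahashi \cite{ta}: run through the finite list of compact rank-two symmetric pairs $(U,K)$, compute the principal curvatures of the corresponding principal orbits, and record the resulting hypersurfaces, including the degenerate cases $g=1$ (great and small hyperspheres) and $g=2$ (the generalized Clifford tori $\mathbb{S}^m(r)\times\mathbb{S}^{n-m}(\sqrt{1-r^{2}})$), which correspond to rank-two pairs of small dimension. The main obstacle is clearly the middle step: ruling out ``exotic'' cohomogeneity-one actions and showing that each one is orbit equivalent to an $s$-representation. This is exactly where one cannot avoid the Lie-theoretic machinery of Hsiang--Lawson (or, alternatively, the structure theory of isoparametric hypersurfaces); a subtler point that also needs care is the reduction to the indecomposable effective case, since the representation of $G$ on $\mathbb{R}^{n+2}$ need a priori be neither irreducible nor effective.
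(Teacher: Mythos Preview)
The paper does not prove this statement at all: Theorem~\ref{isom} is simply quoted from the literature with the citations \cite{Xiang,ta} and is used later as a black box. There is therefore no ``paper's own proof'' to compare your proposal against.

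That said, your outline is a faithful sketch of the classical Hsiang--Lawson/Takagi--Takahashi argument. The reduction to a cohomogeneity-two orthogonal representation, the identification of cohomogeneity with rank for $s$-representations, and the acknowledgement that the hard part is showing every such action is orbit-equivalent to an $s$-representation are all correct. One small point: you should be careful with the claim that $G$ can be taken closed and connected in $\mathrm{SO}(n+2)$ --- passing to the identity component and to the closure each require a short justification (the orbit must remain the same hypersurface), and this is where the reduction to an effective action that you flag at the end actually begins. Since the paper merely cites the result, any version of your outline would already go well beyond what the paper provides.
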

The homogeneous hypersurfaces in $\mathbb{S}^{n+1}$ are always isoparametric. On the basis of Cartan's  \cite{ca1} and  M\"{u}zner's results \cite{mu1,mu2}, combined with the important work of many geometers, the isoparametric hypersurfaces have been classified (for example,see \cite{chi,chi1,mi}).

Next we prove that these hypersurfaces given by Example \ref{ex01} and Example \ref{ex02} are M\"{o}bius homogeneous.
\begin{PROPOSITION}\label{pro-21}
Let $u:M^m\to \mathbb{S}^{m+1}$ be a homogeneous hypersurface, then the cone hypersurface in $\mathbb{R}^{n+1}$ over $u$  $$\sigma\circ f: \mathbb{R}^+\times \mathbb{R}^{n-m-1}\times M^m\to \mathbb{S}^{n+1}$$ is a M\"{o}bius homogeneous hypersurface in $\mathbb{S}^{n+1}$.
\end{PROPOSITION}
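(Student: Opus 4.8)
The plan is to construct an explicit subgroup $G \subset \textup{O}^+(n+2,1)$ whose orbit through a suitable null point $p \in \mathbb{C}^{n+2}_+$ coincides with the M\"{o}bius position manifold $[Y(\mathbb{R}^+\times \mathbb{R}^{n-m-1}\times M^m)]$. By Proposition \ref{x-y-1} this is precisely what must be shown. The building blocks are: first, by Theorem \ref{isom} the homogeneous hypersurface $u: M^m \to \mathbb{S}^{m+1}$ is the orbit of a subgroup $K \subset \textup{O}(m+2)$ acting on $\mathbb{S}^{m+1}$; second, the cone construction $f(t,y,p)=(y,tu(p))$ in $\mathbb{R}^{n+1}$ is invariant under (a) the dilations $v \mapsto \lambda v$ of $\mathbb{R}^{n+1}$ (which rescale $t$ and $y$ simultaneously), (b) the translations in the $\mathbb{R}^{n-m-1}$ factor carrying the $y$-variable, and (c) the block action of $K$ on the last $m+2$ coordinates fixing the $\mathbb{R}^{n-m-1}$ factor. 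Each of these euclidean conformal transformations of $\mathbb{R}^{n+1}$ lifts, via the conformal map $\sigma$ and the formula for the M\"{o}bius position vector, to an element of $\textup{O}^+(n+2,1)$; this is the standard lift used in the proofs of Propositions \ref{pro-201} and \ref{pro-202}.

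Concretely, I would set up coordinates on $\mathbb{R}^{n+3}_1 = \mathbb{R}^2 \times \mathbb{R}^{m+2} \times \mathbb{R}^{n-m-1}$ matching the splitting $\mathbb{R}^{n+1} = \mathbb{R}^{m+2}_{(t,u)} \times \mathbb{R}^{n-m-1}_{y}$, and write down $G$ as the group generated by three commuting (or nearly commuting) pieces: a one-parameter group of ``boosts'' in the $\mathbb{R}^2$ plane realizing the euclidean dilations (a hyperbolic rotation in the first two coordinates, fixing everything else, exactly the $\textup{O}^+(1,1)$-type factor that turns the dilation $v \mapsto e^s v$ into an element of $\textup{O}^+(n+2,1)$); a unipotent $(n-m-1)$-parameter abelian subgroup of translation-type (the same $|u|^2$-type block appearing in $G_r$ of Proposition \ref{pro-201}, now attached to the $y$-coordinates); and the block $\mathrm{diag}(1,1, K, I_{n-m-1})$ with $K \subset \textup{O}(m+2)$ the isometry subgroup of $u$. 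One checks directly that $G$ preserves $\mathbb{C}^{n+2}_+$, hence $G \subset \textup{O}^+(n+2,1)$, and that the orbit of the base point $p = (1,0,\dots,0)$ placed so that $[p]=[Y]$ at one point of the cone sweeps out exactly $[Y(\text{cone})]$. The orbit-dimension count is $1 + (n-m-1) + m = n$, matching $\dim(\mathbb{R}^+\times\mathbb{R}^{n-m-1}\times M^m)$, and since the cone is connected the orbit equals the whole image.

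The main obstacle I anticipate is bookkeeping rather than conceptual: one must verify that the three generating subgroups genuinely commute (or at least generate a group, not just a set) after being transported into $\textup{O}^+(n+2,1)$, and—more delicately—that the conformal scaling factor $\rho(f)$ in the M\"{o}bius position vector transforms correctly so that the action is by honest \emph{linear} maps of $\mathbb{R}^{n+3}_1$ rather than only projectively. The point is that a euclidean similarity of $\mathbb{R}^{n+1}$ changes $(|II|^2 - nH^2)$ by the square of the scaling factor, so $\rho(f)$ scales homogeneously and the lifted map is linear on the light cone; this is exactly the mechanism already exploited implicitly in Proposition \ref{pro-201}, and it is why we may freely work ``up to a similarity'' of $\mathbb{R}^{n+1}$ as was done there. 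A secondary check is that the cone hypersurface has no umbilic points (so that $Y$ and $\rho$ are well-defined), which holds because a cone over $u$ has a flat $(n-m)$-dimensional distribution together with the curved directions coming from $u$, precluding umbilicity for $1 \le m \le n-1$.

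\textbf{Remark on organization.} For Example \ref{ex02} (the logarithmic spiral cylinders) the analogous statement is proved by the same strategy, the only difference being that the relevant one-parameter subgroup of $\textup{O}^+(n+2,1)$ is the \emph{screw motion} combining a rotation in the $\gamma$-plane with the boost realizing the dilation $e^{cs}$—the logarithmic spiral $\gamma(s)=e^{cs}(\cos s,\sin s)$ being precisely the orbit of such a one-parameter group acting conformally on $\mathbb{R}^2$—together with the euclidean translations in the remaining $\mathbb{R}^{n-1}$ factor; this produces a subgroup of dimension $1 + (n-1) = n$ whose orbit is the spiral cylinder.
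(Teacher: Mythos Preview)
Your approach is correct but organized differently from the paper's. The paper does not lift the individual euclidean symmetries (dilation, translation, $K$-action) one at a time; instead it exploits the factorization of the M\"{o}bius position vector of the cone as $Y=\rho_0\bigl(i(t,y),u\bigr)\in\mathbb{H}^{n-m}\times\mathbb{S}^{m+1}\subset\mathbb{R}^{n-m+1}_1\times\mathbb{R}^{m+2}$ (this is the content of Lemma~\ref{redu} and the surrounding computation) and then simply takes the block-diagonal group
\[
G_c=\textup{O}^+(n-m,1)\oplus G_1,
\]
the first factor acting transitively on $\mathbb{H}^{n-m}$ and the second on $u(M^m)\subset\mathbb{S}^{m+1}$. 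So the paper's subgroup is strictly larger than yours: your boost-plus-translations piece is precisely the solvable Iwasawa $AN$-part of $\textup{O}^+(n-m,1)$, which already acts simply transitively on $\mathbb{H}^{n-m}$, whereas the paper uses the full $\textup{O}^+(n-m,1)$. Both choices give a transitive action on $[Y(\text{cone})]$, so both prove the proposition.

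What each buys: your construction is more elementary in that it stays close to the visible euclidean symmetries of the cone in $\mathbb{R}^{n+1}$, but it forces you to worry about the semidirect-product structure (the boost and the translations do \emph{not} commute, as you note), the $\rho$-scaling compatibility, and the precise choice of base point. The paper's construction bypasses all of that bookkeeping by first computing $Y$ explicitly, recognizing the product structure $\mathbb{H}^{n-m}\times\mathbb{S}^{m+1}$, and then reading off the obvious product group; no commutativity or scaling check is needed. A further advantage of the paper's larger group is structural: the same block form $\textup{O}^+(n-m,1)\oplus G_1$ reappears later (Lemmas~\ref{lemm6-1} and~\ref{pro-3}) as the \emph{characterization} of the subgroups of $\textup{O}^+(n+2,1)$ admitting a unique totally geodesic orbit in $\mathbb{H}^{n+2}$, so using it here foreshadows that converse. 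Your minimal-dimension group would not serve that purpose.
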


\begin{proof}
Let $u:M^m\to \mathbb{S}^{m+1}$ be a homogeneous hypersurface in $(m+1)-$dimensional sphere. Then there exists a subgroup $G_1\subset \textup{O}(m+2)$ whose orbit is $u(M^m)$.

 Let \[ G_c=\left\{\left(\begin{array}{cccc}
\textup{O}^+(n-m,1)& & &0\\
 0& & &G_1
\end{array}\right)\right\},
\]  then $G_c$ is a subgroup of $\textup{O}^+(n+2,1)$.

The cone hypersurface $\sigma\circ f: \mathbb{R}^+\times \mathbb{R}^{n-m-1}\times M^m\to \mathbb{S}^{n+1}$ is the  orbit of the subgroup $\Psi(G_c)\subset \mathbb{M}(\mathbb{S}^{n+1})$
acting on the point $$p=(\underbrace{0,,0,\cdots,\frac{1}{\sqrt{2}}}_{n-m},\underbrace{\frac{1}{\sqrt{2}},0,\cdots,0}_{m+2})\in S^{n+1}.$$
Thus the cone hypersurface in $\mathbb{R}^{n+1}$ over $u$  is  M\"{o}bius homogeneous.
\end{proof}

\begin{PROPOSITION}\label{pro-22}
 Let $f=(\gamma,id):I\times \mathbb{R}^{n-1}\mapsto \mathbb{R}^{n+1}$ be a logarithmic spiral cylinder in $\mathbb{R}^{n+1}$,
then the hypersurface $\sigma\circ f$ is a M\"{o}bius homogeneous hypersurface in $\mathbb{S}^{n+1}$.
\end{PROPOSITION}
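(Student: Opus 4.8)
The key feature of the logarithmic spiral is its invariance under a one-parameter group of planar similarities: writing $\gamma(s)=e^{(c+\mathrm i)s}$ in complex notation, one has $e^{(c+\mathrm i)t}\gamma(s)=\gamma(s+t)$, so $\gamma$ is the orbit of the one-parameter group $z\mapsto e^{(c+\mathrm i)t}z$ of similarities of $\mathbb R^2$ (dilation by $e^{ct}$ followed by rotation through angle $t$). Since the group of similarities of $\mathbb R^{n+1}$ sits inside $\mathbb M(\mathbb S^{n+1})$ via $\sigma$, the plan is to combine this ``spiral'' similarity with the translations along the cylinder directions to obtain an $n$-dimensional subgroup of $\mathbb M(\mathbb S^{n+1})$ acting transitively on $\sigma\circ f$, and then to invoke Proposition \ref{x-y-1}.

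First I would introduce, for $t\in\mathbb R$, the similarity $\Phi_t\colon\mathbb R^{n+1}\to\mathbb R^{n+1}$ given by $\Phi_t(x_1,x_2,y)=\bigl(e^{ct}R_t(x_1,x_2),\,e^{ct}y\bigr)$, where $R_t$ is the rotation by $t$ in the $(x_1,x_2)$-plane and $y=(y_1,\dots,y_{n-1})\in\mathbb R^{n-1}$, together with the translations $T_a(x_1,x_2,y)=(x_1,x_2,y+a)$, $a\in\mathbb R^{n-1}$. A short computation — using $e^{(c+\mathrm i)t}\gamma(s)=\gamma(s+t)$ in the first two slots and triviality in the remaining ones — gives $\Phi_t(f(s,y))=f(s+t,e^{ct}y)$ and $T_a(f(s,y))=f(s,y+a)$, so both maps preserve the image of $f$. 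I would then let $G\subset\mathbb M(\mathbb S^{n+1})$ be the image under conjugation by $\sigma$ of the group of similarities generated by all the $\Phi_t$ and all the $T_a$; the identity $\Phi_tT_a\Phi_t^{-1}=T_{e^{ct}a}$ shows this group equals $\{\Phi_tT_a:t\in\mathbb R,\ a\in\mathbb R^{n-1}\}$, a semidirect product $\mathbb R\ltimes\mathbb R^{n-1}$ of dimension $n$. Transitivity of $G$ on $\sigma\circ f(I\times\mathbb R^{n-1})$ — with $I=\mathbb R$, as homogeneity forces — is then immediate, since $T_{\,y_2-e^{c(s_2-s_1)}y_1}\circ\Phi_{s_2-s_1}$ carries $f(s_1,y_1)$ to $f(s_2,y_2)$. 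Hence $\sigma\circ f$ is an orbit of $G$, i.e., a M\"obius homogeneous hypersurface.

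To present this in the style of the preceding propositions, I would alternatively exhibit an explicit subgroup of $\textup{O}^+(n+2,1)$: the dilation factor $e^{ct}$ corresponds to a hyperbolic rotation with parameter $ct$ in the first two coordinates of $\mathbb R^{n+3}_1$, the rotation $R_t$ to an ordinary rotation through $t$ in the next two coordinates, and $T_a$ to the parabolic block modeled on the matrix $G_r$ of Proposition \ref{pro-201} (now acting on those two light-cone coordinates together with the $n-1$ coordinates of the cylinder direction, and fixing the two coordinates of the spiral plane); one then checks that $[Y(I\times\mathbb R^{n-1})]$ is the orbit of this subgroup through the null line of $f(0,0)=(1,0,\dots,0)\in\mathbb R^{n+1}$, and invokes Proposition \ref{x-y-1}. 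I do not foresee any real difficulty; the proof is essentially bookkeeping. The only points requiring a little care are that $\Phi_t$, though not an isometry of $\mathbb R^{n+1}$, nonetheless corresponds to a genuine element of $\textup{O}^+(n+2,1)$ — which holds because dilations and rotations of $\mathbb R^{n+1}$ are conformal, hence M\"obius, and by Theorem \ref{theorem-1} every M\"obius transformation acts on the canonical M\"obius position vector $Y$ through the corresponding element of $\textup{O}^+(n+2,1)$ — and, correspondingly, that the hyperbolic-rotation parameter be synchronized with the rotation angle so that a single group element really implements $\Phi_t$ on the light cone.
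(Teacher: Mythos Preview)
Your proposal is correct, and your second paragraph --- writing down the explicit subgroup of $\textup{O}^+(n+2,1)$ as a hyperbolic boost in the first two coordinates, an ordinary rotation in the next two, and a parabolic translation block in the remaining ones, then checking the orbit through $(1,0,\dots,0)$ --- is exactly what the paper does: it simply records the matrix group $G_e$ (whose $2\times 2$ upper-left block with $y=0$ is $\bigl(\begin{smallmatrix}\cosh cs & -\sinh cs\\ -\sinh cs & \cosh cs\end{smallmatrix}\bigr)$, as you anticipated) and asserts that $\sigma\circ f$ is its orbit.

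Your first paragraph, however, takes a slightly different and arguably cleaner route: instead of producing the $(n+3)\times(n+3)$ matrix and leaving the verification to the reader, you work directly in $\mathbb{R}^{n+1}$ with the spiral similarities $\Phi_t$ and translations $T_a$, verify transitivity by the elementary computation $\Phi_t(f(s,y))=f(s+t,e^{ct}y)$, and only then pass to $\mathbb{M}(\mathbb{S}^{n+1})$ via the general fact that similarities of $\mathbb{R}^{n+1}$ are conformal. This buys you transparency --- one sees immediately \emph{why} the group exists and where each piece comes from --- at no real cost, since the lift to $\textup{O}^+(n+2,1)$ is standard. The paper's bare-matrix presentation is more compact but less illuminating; your approach would make a better proof for exposition.
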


\begin{proof}
Let $y=(y_1,\cdots,y_{n-1})\in \mathbb{R}^{n-1}$,
\begin{equation}\label{group}
G_e=\left\{\left(\begin{array}{ccccccc}
\frac{1+|y|^2+e^{2cs}}{2e^{cs}}&\frac{1+|y|^2-e^{2cs}}{2e^{cs}} &0 &0&y_1&\cdots&y_{n-1}\\
\frac{1-|y|^2-e^{2cs}}{2e^{cs}}&\frac{1-|y|^2+e^{2cs}}{2e^{cs}} &0&0&-y_1&\cdots&-y_{n-1}\\
0&0&\cos s&-\sin s&0&\cdots&0\\
0&0&\sin s&\cos s&0&\cdots& 0\\
\frac{y_1}{e^{cs}}&\frac{y_1}{e^{cs}}&0&0&1&\cdots&0\\
\vdots&\vdots&\vdots&\vdots&\vdots&\ddots&\vdots\\
\frac{y_{n-1}}{e^{cs}}&\frac{y_{n-1}}{e^{cs}}&0&0&0&\cdots&1
\end{array}\right)\right\},
\end{equation}
then  $G_e$ is a subgroup of $\textup{O}^+(n+2,1)$.

Then the logarithmic spiral cylinder $\sigma\circ f$ is the orbit of the subgroup $\Psi(G_e)\subset M\ddot{o}b(\mathbb{S}^{n+1})$
acting on the point $p=(1,0,\cdots,0)\in \mathbb{S}^{n+1}.$
\end{proof}

\begin{Remark}
Here we point out that we also give the subgroup corresponding to logarithmic spiral cylinder in \cite{lim}, but there are typos in \cite{lim}. Now we revise the subgroup by (\ref{group}) and correct the mistakes in \cite{lim}.
\end{Remark}

\section{M\"{o}bius invariants for hypersurfaces in $\mathbb{S}^{n+1}$}
In this section, we recall M\"{o}bius invariants of hypersurfaces in $\mathbb{S}^{n+1}$.
For details we refer to \cite{w} or \cite{lim3}.

Let $x: M^n\mapsto \mathbb{S}^{n+1}$ be a hypersurface without umbilical points, and $e_{n+1}$ the unit normal vector field.
Let $II$ and $H$ be the second fundamental form and
the mean curvature of $x$, respectively.
The M\"{o}bius position vector $Y$ of $x$ is
$$Y: M^n\mapsto \mathbb{C}^{n+2}_+,$$
where $Y=\rho(1,x), ~~~\rho^2=\frac{n}{n-1}(\|II\|^2-nH^2).$

It follows immediately from Theorem 2.1 that
$$g=\langle dY,dY\rangle=\rho^2dx\cdot dx$$
is a M\"{o}bius invariant, which is called the M\"{o}bius metric of $x$ (see \cite{w}).

Let $\Delta$ be the Laplacian operator with respect to $g$, we
define
$$N=-\frac{1}{n}\Delta Y-\frac{1}{2n^2}\langle\Delta Y,\Delta Y\rangle Y.$$
Then we have
$$\langle Y,Y\rangle=0, ~~\langle N,Y\rangle=1,~~ \langle N,N\rangle=0.$$

Let $\{E_1,\cdots,E_n\}$ be a local orthonormal frame for
$(M^n,g)$ with the dual frame $\{\omega_1,\cdots,\omega_n\}$, and
write $Y_i=E_i(Y)$, then we have
$$\langle Y_i,Y\rangle=\langle Y_i,N\rangle=0,~~ \langle Y_i,Y_j\rangle=\delta_{ij},~~ 1\leq i,j\leq n.$$
We define the conformal Gauss map $\xi$  of $x$ by
$$\xi=(H,Hx+e_{n+1}).$$
By direct computations, we have
$$\langle \xi,Y\rangle=\langle \xi,N\rangle=\langle \xi,Y_i\rangle=0, ~~\langle \xi,\xi\rangle=1.$$

Then $\{Y,N,Y_1,\cdots,Y_n,\xi\}$ forms a moving frame in
$\mathbb{R}^{n+3}_1$ along $M^n$. We  use
the following range of indices in this section: $1\leq
i,j,k,l\leq n$. We can write the structure equations as follows:
\begin{eqnarray*}
&&dY=\sum_iY_i\omega_i,\label{con}\\
&&dN=\sum_{ij}A_{ij}\omega_iY_j+\sum_iC_i\omega_i\xi,\\
&&dY_i=-\sum_jA_{ij}\omega_jY-\omega_iN+\sum_j\omega_{ij}Y_j+\sum_jB_{ij}\omega_j\xi,\label{con2}\\
&&d\xi=-\sum_iC_i\omega_iY-\sum_{ij}\omega_jB_{ij}Y_i,\label{con1}
\end{eqnarray*}
where $\omega_{ij}$ is the connection form of the M\"{o}bius metric $g$
and $\omega_{ij}+\omega_{ji}=0$. The tensors $A=\sum_{ij}A_{ij}\omega_i\otimes\omega_j$, $C=\sum_iC_i\omega_i$
and $B=\sum_{ij}B_{ij}\omega_i\otimes\omega_j$ are called the
Blaschke tensor, the M\"{o}bius form and the M\"{o}bius second fundamental
form of $x$, respectively. The eigenvalues
of $(B_{ij})$ are called the M\"{o}bius principal curvatures of $x$. The
covariant derivative of $C_i, A_{ij}, B_{ij}$ are defined by
\begin{eqnarray*}
&&\sum_jC_{i,j}\omega_j=dC_i+\sum_jC_j\omega_{ji},\label{co1}\\
&&\sum_kA_{ij,k}\omega_k=dA_{ij}+\sum_kA_{ik}\omega_{kj}+\sum_kA_{kj}\omega_{ki},\label{co2}\\
&&\sum_kB_{ij,k}\omega_k=dB_{ij}+\sum_kB_{ik}\omega_{kj}+\sum_kB_{kj}\omega_{ki}.\label{co3}
\end{eqnarray*}
The integrability conditions for the structure equations are given
by
\begin{eqnarray}
&&A_{ij,k}-A_{ik,j}=B_{ik}C_j-B_{ij}C_k,\label{equa1}\\
&&C_{i,j}-C_{j,i}=\sum_k(B_{ik}A_{kj}-B_{jk}A_{ki}),\label{equa2}\\
&&B_{ij,k}-B_{ik,j}=\delta_{ij}C_k-\delta_{ik}C_j,~~\sum_jB_{ij,j}=-(n-1)C_i,\label{equa3}\\
&&R_{ijkl}=B_{ik}B_{jl}-B_{il}B_{jk}+\delta_{ik}A_{jl}+\delta_{jl}A_{ik}-\delta_{il}A_{jk}-\delta_{jk}A_{il},\label{equa4}\\
&&\sum_iB_{ii}=0, ~~\sum_{ij}(B_{ij})^2=\frac{n-1}{n},~~
trA=\sum_iA_{ii}=\frac{1}{2n}(1+n^2s),\label{equa6}
\end{eqnarray}
where $R_{ijkl}$ denote the curvature tensor of $g$,
$s=\frac{1}{n(n-1)}\sum_{ij}R_{ijij}$ is the normalized
M\"{o}bius scalar curvature. When $n\geq 3$, we have the following fundamental theorem of hypersurfaces in M\"{o}bius geometry.
\begin{Theorem}(\cite{w})
Two hypersurfaces $x: M^n\mapsto \mathbb{S}^{n+1}$ and $\tilde{x}:
M^n\mapsto \mathbb{S}^{n+1} (n\geq 3)$ are M\"{o}bius equivalent if and only
if there exists a diffeomorphism $\varphi: M^n\rightarrow M^n$,
which preserves the M\"{o}bius metric $g$ and the M\"{o}bius second
fundamental form $B$.
\end{Theorem}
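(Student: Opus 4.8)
The statement to be proved is the classical fundamental theorem for hypersurfaces in Möbius geometry (due to Wang): that the Möbius metric $g$ together with the Möbius second fundamental form $B$ constitute a complete set of invariants when $n\ge 3$. The plan is to prove the two directions separately; the forward direction (equivalent hypersurfaces have congruent $(g,B)$) is immediate from Theorem 2.1, so the substance is the converse.

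First, for the necessity, suppose $x$ and $\tilde x$ are Möbius equivalent, so by Theorem 2.1 there exists $T\in\textup{O}^+(n+2,1)$ with $\tilde Y=YT$. Since $g=\langle dY,dY\rangle$ is built purely out of $Y$ and the Lorentz inner product, which $T$ preserves, and since $N$, the $Y_i$, and $\xi$ are all defined canonically from $Y$ via $g$-invariant operations (the Laplacian $\Delta$ of $g$, orthonormality, the light-cone normalizations), the whole moving frame $\{\tilde Y,\tilde N,\tilde Y_i,\tilde\xi\}$ equals $\{Y,N,Y_i,\xi\}T$. Reading off the structure equations, $B$ is the coefficient tensor $\langle dY_i,\xi\rangle$-type quantity, hence transforms as a scalar tensor, so $\tilde B=B$ after matching frames; the identity diffeomorphism $\varphi=\mathrm{id}$ works.

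For the sufficiency, assume $\varphi:M^n\to M^n$ preserves $g$ and $B$; after pulling back by $\varphi$ we may assume $\varphi=\mathrm{id}$ and that $x,\tilde x$ induce the \emph{same} $g$ and $B$ on $M^n$. The plan is to show that the remaining invariants $A$ (the Blaschke tensor) and $C$ (the Möbius form) are then also forced to agree, and then to integrate the structure equations. The key point is that when $n\ge 3$ the Blaschke tensor is determined algebraically: equation \eqref{equa4} expresses the curvature tensor $R_{ijkl}$ of $g$ in terms of $B$ and $A$, and since $B$ and $g$ (hence $R_{ijkl}$) are common to $x$ and $\tilde x$, one solves the linear system for $A_{ij}$ — here the condition $n\ge 3$ is exactly what makes this linear system in $A$ nondegenerate (tracing \eqref{equa4} appropriately, using $\sum_i\delta_{ii}=n$ and $\sum_i B_{ii}=0$, recovers each $A_{jl}$ uniquely). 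Thus $\tilde A=A$. Then \eqref{equa3}, specifically $\sum_j B_{ij,j}=-(n-1)C_i$, determines $C$ from $B$ and the Levi-Civita connection of $g$ (the covariant derivative $B_{ij,k}$), so $\tilde C=C$ as well, using $n\ge 2$.

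Finally, with $g,A,B,C$ all agreeing, the two moving frames $F=(Y,N,Y_1,\dots,Y_n,\xi)$ and $\tilde F=(\tilde Y,\tilde N,\tilde Y_1,\dots,\tilde Y_n,\tilde\xi)$ satisfy the same linear system of first-order PDEs $dF=F\,\Omega$, $d\tilde F=\tilde F\,\Omega$, with the same matrix-valued $1$-form $\Omega$ of connection/structure coefficients. By the standard uniqueness theorem for such systems (Cartan–Darboux, valid since the integrability conditions \eqref{equa1}--\eqref{equa6} hold), on a simply connected piece there is a constant matrix $T$ with $\tilde F=FT$; checking that $T$ preserves the Lorentz form (it maps one Lorentz-orthonormal-type frame to another) gives $T\in\textup{O}(n+2,1)$, and $T\in\textup{O}^+(n+2,1)$ follows from both $Y,\tilde Y$ lying in $\mathbb{C}^{n+2}_+$. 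In particular $\tilde Y=YT$, so Theorem 2.1 yields the Möbius equivalence of $x$ and $\tilde x$; the general (not simply connected) case follows by a monodromy/connectedness argument since $T$ is locally constant. The main obstacle is the sufficiency direction, and within it the genuinely delicate point is the algebraic recovery of $A$ from $(g,B)$ via \eqref{equa4} — this is precisely where the hypothesis $n\ge 3$ is indispensable, and it must be carried out by a careful contraction argument rather than waved through.
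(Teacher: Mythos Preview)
The paper does not prove this theorem at all: it is quoted verbatim from Wang's paper \cite{w} as a foundational result and used without argument. So there is no ``paper's own proof'' to compare against; your proposal is effectively supplying the standard proof of Wang's fundamental theorem, and as such it is correct in outline and in its identification of the key step.

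Your sketch follows the expected route: necessity is immediate from Theorem~2.1, and for sufficiency you recover $A$ from $(g,B)$ via the Gauss-type equation \eqref{equa4} by contraction, recover $C$ from the Codazzi-type identity $\sum_j B_{ij,j}=-(n-1)C_i$ in \eqref{equa3}, and then integrate the first-order linear system for the moving frame. The crucial observation, which you correctly flag, is that contracting \eqref{equa4} on $i=k$ yields
\[
R_{jl} \;=\; -\sum_i B_{il}B_{ij} + (n-2)A_{jl} + \delta_{jl}\,\mathrm{tr}\,A,
\]
and a further trace together with \eqref{equa6} determines $\mathrm{tr}\,A$ in terms of the scalar curvature of $g$; hence $A_{jl}$ is uniquely solvable precisely when $n-2\neq 0$. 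This is exactly how the hypothesis $n\ge 3$ enters, and your proposal pinpoints it. One small point worth tightening: the frame integration argument gives only a \emph{local} Möbius equivalence on simply connected pieces, and ``Möbius equivalent'' in the paper's usage is implicitly local (Theorems such as \ref{the4-2} are stated locally), so the monodromy remark is harmless but also not strictly needed here.
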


The coefficients of the M\"{o}bius second fundamental form and the M\"{o}bius form can be calculated in terms of the geometry of $x$ in $\mathbb{S}^{n+1}$ or $\sigma\circ f$ in $\mathbb{R}^{n+1}$ (see \cite{lim3,w}), under a local orthonormal frame $\{e_1,\cdots,e_n\}$,
\begin{equation}\label{cre2}
\begin{split}
B_{ij}&=\rho^{-1}(h_{ij}-H\delta_{ij}),\\
C_i&=-\rho^{-2}[e_i(H)+\sum_j(h_{ij}-H\delta_{ij})e_j(\log\rho)].
\end{split}
\end{equation}
Let $\{b_1,\cdots,b_n\}$ be the M\"{o}bius principal curvatures and
$\{\lambda_1,\cdots,\lambda_n\}$ be the principal curvatures of $x$,
then, from \eqref{cre2},
\begin{equation}\label{prin-cur}
b_i=\rho^{-1}(\lambda_i-H).
\end{equation}
Clearly the number of distinct M\"{o}bius principal curvatures
is the same as that of its distinct principal curvatures.

Next  we  calculate the M\"{o}bius invariants of the cone in $\mathbb{R}^{n+1}$,  and use these  M\"{o}bius invariants to characterize the cone in $\mathbb{R}^{n+1}$.

For $1 \leq m \leq n-1$,
let $u: {M}^m\longrightarrow \mathbb{S}^{m+1}\subset \mathbb{R}^{m+2}$ be an immersed hypersurface in
$\mathbb{S}^{m+1}$. The cone over $u$ in $\mathbb{R}^{n+1}$ is given as
\begin{equation*}
f(t,y,p)=(y,tu(p)): \mathbb{R}^+\times \mathbb{R}^{n-m-1}\times {M}^m\longrightarrow \mathbb{R}^{n+1}.
\end{equation*}

It is easily calculated that the first and the second fundamental form of the cone $f$ is
\[
I_f =  dt^2 + |dy|^2 +t^2 I_u, ~~~II_f = t~II_u,\]
where $I_u$ and $II_u$ are the first and
second fundamental forms of the hypersurface $u$ in the sphere $\mathbb{S}^{m+1}$
respectively. Let $H_u$ denote the mean curvature of $u$. The principal curvatures of the cone $f$ are
\begin{equation}\label{princ}
\underbrace{0,\cdots,0}_{n-m},\frac{1}{t}\lambda_1,\cdots,\frac{1}{t}\lambda_m,
\end{equation}
where $\{\lambda_1,\cdots,\lambda_m\}$ are the principal curvatures of $u$. Hence
$$
\rho^2 = \frac  n{n-1}\left(|II_u|^2 - \frac {m^2}nH_u^2\right) \frac 1{t^2}.
$$
The M\"{o}bius metric of the cone is
\begin{equation}\label{metric}
g = \rho^2 I_f =  \rho_0^2 (\frac {dt^2 + |dy|^2}{t^2} + I_u),
\end{equation}
and the M\"{o}bius position vector of the cone  is
$$
Y(t, y, p) =\frac {\rho_0}t (\frac{1+t^2+|y|^2}{2},\frac{1-t^2-|y|^2}{2}, y, tu(p)):
\mathbb{R}^+\times \mathbb{R}^{n-m-1}\times{M}^m\to
C^{n+2}_+\subset \mathbb{R}^{n+3}_1,
$$
where $\rho_0^2=\frac{n}{n-1}(|II_u|^2-\frac{m^2}{n}H_u^2)$. Note that
\begin{equation*}
i(t, y) = (\frac{1+t^2+|y|^2}{2t},\frac{1-t^2-|y|^2}{2t},\frac{y}{t}):
\mathbb{R}^+\times \mathbb{R}^{n-m-1}=\mathbb{H}^{n-m}\to \mathbb{H}^{n-m}\subset \mathbb{R}^{n-m+1}_1
\end{equation*}
is nothing but the identity map of $\mathbb{H}^{n-m}$, since $\mathbb{R}^+\times \mathbb{R}^{n-m-1}=\mathbb{H}^{n-m}$ is the upper
half-space endowed with the standard hyperbolic metric. We may now rewrite the M\"{o}bius position vector of the cone $f$ as
\begin{equation}\label{mop2}
Y=\rho_0(i(t, y), u):\mathbb{R}^+\times \mathbb{R}^{n-m-1}\times{M}^m \to \mathbb{C}^{n+2}_+\subset \mathbb{R}^{n+3}_1.
\end{equation}
Consequently we have

\begin{Lemma}\label{redu}
Let $u:{M}^m\to \mathbb{S}^{m+1}$ be an immersed hypersurface in $\mathbb{S}^{m+1}\subset\mathbb{R}^{m+2}$ and
 \begin{equation*}
\frac 1{\rho_0}Y= (i(t, y), u):\mathbb{R}^+\times \mathbb{R}^{n-m-1}\times{M}^m\to \mathbb{H}^{n-m}\times\mathbb{S}^{m+1}\subset \mathbb{R}^{n+3}_1
\end{equation*}
for smooth positive function $\rho_0$. Suppose that $Y$ is the M\"{o}bius position vector for an immersed hypersurafce
$$f:\mathbb{R}^+\times \mathbb{R}^{n-m-1}\times{M}^m\to \mathbb{R}^{n+1}.$$
Then $f$ is a cone over $u$ and $\rho_0^2=\frac{n}{n-1}(|II_u|^2-\frac{m^2}{n}H_u^2)$.
\end{Lemma}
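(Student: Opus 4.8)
The plan is to run the computation of Propositions~\ref{pro-21} and of the block
preceding the lemma in reverse. We are given the map
$\tfrac{1}{\rho_0}Y=(i(t,y),u)$ with values in
$\mathbb{H}^{n-m}\times\mathbb{S}^{m+1}\subset\mathbb{R}^{n+3}_1$, and the hypothesis that
$Y$ is \emph{the} M\"obius position vector of some immersed hypersurface
$f:\mathbb{R}^+\times\mathbb{R}^{n-m-1}\times M^m\to\mathbb{R}^{n+1}$. First I would
recall that the M\"obius position vector of an arbitrary hypersurface
$f:M^n\to\mathbb{R}^{n+1}$ has the canonical form
$Y=\rho(f)\bigl(\tfrac{1+|f|^2}{2},\tfrac{1-|f|^2}{2},f\bigr)$ with
$\rho(f)^2=\tfrac{n}{n-1}(|II_f|^2-nH_f^2)$, as stated in the excerpt. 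Comparing this
with the given expression $Y=\rho_0(i(t,y),u)$ and using that the first two coordinates
of $i(t,y)$ are $\tfrac{1+t^2+|y|^2}{2t}$ and $\tfrac{1-t^2-|y|^2}{2t}$, one reads off,
up to the overall conformal scaling that defines the M\"obius position vector uniquely
(a positive function times a fixed null lift), that $f$ must satisfy
$\tfrac{1+|f|^2}{2}=\lambda\cdot\tfrac{1+t^2+|y|^2}{2t}$,
$\tfrac{1-|f|^2}{2}=\lambda\cdot\tfrac{1-t^2-|y|^2}{2t}$ and $f=\lambda\cdot u$ for a
positive function $\lambda$; adding the first two gives $\lambda=t$, and hence
$f(t,y,p)=(y,t\,u(p))$. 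This identifies $f$ as precisely the cone over $u$.

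Once $f$ is pinned down to be the cone, the second assertion $\rho_0^2=\tfrac{n}{n-1}(|II_u|^2-\tfrac{m^2}{n}H_u^2)$ is just the specialization of the
formula $\rho(f)^2=\tfrac{n}{n-1}(|II_f|^2-nH_f^2)$ to the cone, which was already
carried out in the paragraph before the lemma: the principal curvatures of the cone are
$\underbrace{0,\dots,0}_{n-m},\tfrac1t\lambda_1,\dots,\tfrac1t\lambda_m$, so
$|II_f|^2=\tfrac1{t^2}|II_u|^2$, $nH_f=\tfrac1t\sum_i\lambda_i=\tfrac{m}{t}H_u$, and
therefore $\rho(f)^2=\tfrac{n}{n-1}\bigl(|II_u|^2-\tfrac{m^2}{n}H_u^2\bigr)\tfrac1{t^2}$,
i.e.\ $\rho(f)=\rho_0/t$. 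But the given normalization already absorbs the factor
$1/t$ into the hyperbolic identity map $i(t,y)$ (recall $i$ is the identity of
$\mathbb{H}^{n-m}$ in the upper half-space model), so the scalar in front of
$(i(t,y),u)$ is exactly $\rho_0$ with $\rho_0^2=\tfrac{n}{n-1}(|II_u|^2-\tfrac{m^2}{n}H_u^2)$, as claimed.

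The one genuinely delicate point — and the step I expect to be the main obstacle — is
the uniqueness used in the first paragraph: the M\"obius position vector $Y$ of a given
hypersurface $f$ is well defined, but a priori the \emph{same} null immersion $Y$ into
$\mathbb{C}^{n+2}_+$ could arise from a non-cone hypersurface. So I need to argue that
the decomposition $Y=\rho_0\,(i(t,y),u)$ into a positive scalar times the specified
$\mathbb{H}^{n-m}\times\mathbb{S}^{m+1}$-valued map forces the last $m+2$ Euclidean
coordinates of the underlying $f$ to be a positive multiple of $u(p)$ and the first two
to have the cone form. This is exactly where one uses that the projective class $[Y]$
determines the point $[(1,x)]=\Phi(x)\in PC^{n+1}$, hence $x$ (and via the stereographic
projection $\sigma$, hence $f$ up to the choice of scale $\rho$); and then the
requirement that $Y$ be literally the M\"obius position vector — not merely projectively
equivalent to one — fixes $\rho=\rho(f)$, closing the argument. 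Concretely I would
spell this out by writing $Y$ in the $\mathbb{R}^{n+3}_1$-coordinates
$\bigl(\tfrac{1+|f|^2}{2},\tfrac{1-|f|^2}{2},f\bigr)$ up to scale, matching coordinate
blocks with $(i(t,y),u)$, and solving for $f$ and $\rho$ as above.
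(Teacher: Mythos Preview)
Your proposal is correct and is exactly the argument the paper has in mind: the lemma is stated with the word ``Consequently'' immediately after the forward computation expressing the M\"obius position vector of the cone as $Y=\rho_0(i(t,y),u)$, and the proof is simply that this computation is invertible by coordinate comparison, which is precisely what you carry out. One small notational slip: when you match coordinates you write ``$f=\lambda\cdot u$'', but $f$ has $n+1$ components while $u$ has only $m+2$; the correct statement (which you in fact use to reach the conclusion $f=(y,tu)$) is that the last $m+2$ components of $f$ equal $\lambda\, u$ and the preceding $n-m-1$ components equal $\lambda\cdot y/t$.
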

Using (\ref{prin-cur}), Let $\mu=\frac{n-1}{n\sqrt{|II_u|^2 - \frac {m^2}nH_u^2}}$, we can obtain the M\"{o}bius principal curvatures of the cone as follows.
$$\underbrace{-\frac{m}{n}\mu H_u,\cdots,-\frac{m}{n}\mu H_u}_{n-m},
\mu(\lambda_1-\frac{m}{n}H_u),\cdots,\mu(\lambda_m-\frac{m}{n}H_u).$$
Using (\ref{cre2}), we have the following results,
\begin{Corollary}\label{cone1}
Let $u:M^m\to \mathbb{S}^{m+1}$ be a homogeneous hypersurface, then the M\"{o}bius form of the cone hypersurface in $\mathbb{R}^{n+1}$ over $u$ vanishes, i.e., $C=0$.
\end{Corollary}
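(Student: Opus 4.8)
The plan is to compute the M\"obius form $C$ of $\sigma\circ f$ directly from \eqref{cre2}, by evaluating the $C_i$ in an orthonormal frame adapted to the cone. Write the domain of the cone as $\mathbb{R}^+\times\mathbb{R}^{n-m-1}\times M^m$ with coordinates $(t,y,p)$, so that $f(t,y,p)=(y,tu(p))$, $I_f=dt^2+|dy|^2+t^2 I_u$ and $II_f=t\,II_u$. Since $u$ is a homogeneous --- hence isoparametric --- hypersurface of $\mathbb{S}^{m+1}$, its principal curvatures $\lambda_1,\dots,\lambda_m$ are constant; consequently $H_u$ and $|II_u|^2$ are constant, and therefore $\rho_0^2=\frac{n}{n-1}\bigl(|II_u|^2-\frac{m^2}{n}H_u^2\bigr)$ is a \emph{positive constant} (we assume $\rho_0>0$, i.e.\ that the cone has no umbilic point, so that its M\"obius invariants are defined). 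Recall also that $\rho=\rho_0/t$ for the cone.

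First I would fix the frame. Pick locally a smooth orthonormal frame $\{\tilde e_1,\dots,\tilde e_m\}$ of $(M^m,I_u)$ diagonalising $II_u$, say $II_u(\tilde e_\alpha,\tilde e_\beta)=\lambda_\alpha\delta_{\alpha\beta}$, and put
\begin{equation*}
e_0=\partial_t,\qquad e_a=\partial_{y_a}\ \ (1\le a\le n-m-1),\qquad e_\alpha=\tfrac1t\,\tilde e_\alpha\ \ (1\le\alpha\le m).
\end{equation*}
From the expressions for $I_f$ and $II_f$, this is an $I_f$-orthonormal frame in which the Euclidean second fundamental form of $f$ is diagonal, with $h_{00}=0$, $h_{aa}=0$, $h_{\alpha\alpha}=\lambda_\alpha/t$ and all other entries zero; in particular the mean curvature of $f$ is $H=\frac{m}{n}\frac{H_u}{t}$. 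Since $\rho_0$ and $H_u$ are constant while $t$ does not depend on $y$ or $p$, among all the derivatives $e_i(\log\rho)$ and $e_i(H)$ the only ones that do not vanish are
\begin{equation*}
e_0(\log\rho)=-\frac1t,\qquad e_0(H)=-\frac{m}{n}\frac{H_u}{t^2}.
\end{equation*}

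It then remains to substitute into $C_i=-\rho^{-2}\bigl[e_i(H)+\sum_j(h_{ij}-H\delta_{ij})e_j(\log\rho)\bigr]$ from \eqref{cre2}. If $i=a$ or $i=\alpha$, then $e_i(H)=0$, and the only index $j$ with $e_j(\log\rho)\neq0$ is $j=0$, for which $h_{i0}=0$ and $\delta_{i0}=0$; hence $C_a=C_\alpha=0$. If $i=0$, the sum collapses to the single term $j=0$, and, using $h_{00}=0$,
\begin{equation*}
C_0=-\rho^{-2}\bigl[e_0(H)-H\,e_0(\log\rho)\bigr]=-\rho^{-2}\Bigl[-\tfrac{m}{n}\tfrac{H_u}{t^2}-\bigl(\tfrac{m}{n}\tfrac{H_u}{t}\bigr)\bigl(-\tfrac1t\bigr)\Bigr]=0,
\end{equation*}
the two terms in the last bracket cancelling. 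Thus $C_i=0$ for every $i$, so $C=0$. I do not expect a real obstacle here; the only point requiring care is that the adapted frame be \emph{simultaneously} $I_f$-orthonormal and diagonalise $II_f$ (so that $h_{0j}=h_{aj}=0$ for all $j$), which is exactly what forces the cancellation in the surviving component $C_0$. Alternatively one could exploit the product structure of the M\"obius metric \eqref{metric}, but the direct frame computation is the most transparent.
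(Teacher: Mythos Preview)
Your proof is correct and follows precisely the approach the paper indicates: the paper simply writes ``Using \eqref{cre2}, we have the following results'' and states the corollary, leaving the reader to carry out exactly the frame computation you have written out. Your explicit verification that $e_0(H)$ and $-H\,e_0(\log\rho)$ cancel in the $i=0$ component, together with the vanishing of $h_{0j}$ for $j\neq 0$, is the whole content of the omitted argument.
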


\section{The proof of main theorem \ref{th11}}
Let $x: M^n\mapsto \mathbb{S}^{n+1} (n\geq 2)$ be a M\"{o}bius homogeneous hypersurface.
If there exists an umbilical point in $M^n$,
then the hypersurface $x$ is a totally umbilical hypersurface which can be viewed as an isometric homogeneous hypersurface in $\mathbb{S}^{n+1}$. So this is a trivial case.

In the rest we assume that the M\"{o}bius homogeneous hypersurface $x$ is umbilic-free. The M\"{o}bius invariants $g, B, A$ and $C$ can be defined on the hypersurface.
The proof of the Main Theorem \ref{th11} depends on the following key result.
\begin{Theorem}\label{the1}
Let $x: M^n\to \mathbb{S}^{n+1}$ be a M\"{o}bius homogeneous hypersurface with $r$ distinct principal curvatures. If  $r>2$,
then the M\"{o}bius form vanishing, i.e., $C=0$.
\end{Theorem}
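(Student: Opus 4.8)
The plan is to realize the M\"{o}bius homogeneous hypersurface as an orbit in the Lorentz model and to run the Takagi--Takahashi trichotomy for the acting subgroup. By Proposition~\ref{x-y-1}, the M\"{o}bius position vector presents $Y(M^n)$ as an orbit $G\cdot p$, $p\in\mathbb{C}^{n+2}_+$, of a subgroup $G\subset\textup{O}^+(n+2,1)$; passing to the identity component we may assume $G$ is a connected Lie subgroup. Identifying $\textup{O}^+(n+2,1)$ with the isometry group of $\mathbb{H}^{n+2}$, Theorem~\ref{4-1} divides the possibilities into three cases according to the orbit behaviour of $G$: the \emph{elliptic} case ($G$ fixes a point of $\mathbb{H}^{n+2}$), the \emph{parabolic} case ($G$ fixes a unique point of $\partial_\infty\mathbb{H}^{n+2}$), and the \emph{hyperbolic} case (neither, so that $G$ preserves a proper non-degenerate Lorentzian subspace of $\mathbb{R}^{n+3}_1$ and acts on the associated hyperbolic space with no fixed point of the previous two types). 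Throughout, homogeneity of $Y(M^n)$ forces every scalar M\"{o}bius invariant to be constant on $M^n$ --- in particular the M\"{o}bius principal curvatures $b_1,\dots,b_n$, the normalized M\"{o}bius scalar curvature $s$, and $|C|^2=\sum_iC_i^2$ --- which is what makes the structure equations \eqref{equa1}--\eqref{equa6} tractable.

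I would first dispatch the elliptic and parabolic cases. In the elliptic case $G$ is conjugate into the stabilizer of a point of $\mathbb{H}^{n+2}$, i.e.\ into $\textup{O}(n+2)\subset\textup{O}^+(n+2,1)$, so $x$ is M\"{o}bius equivalent to an isometric homogeneous --- hence isoparametric --- hypersurface of $\mathbb{S}^{n+1}$; then $H$ and $\|II\|^2$ (hence $\rho$) are constant, and \eqref{cre2} gives $C_i=-\rho^{-2}\big[e_i(H)+\sum_j(h_{ij}-H\delta_{ij})e_j(\log\rho)\big]=0$. In the parabolic case $G$ fixes a null line, hence a point $p_0\in\mathbb{S}^{n+1}$, and composing with a M\"{o}bius transformation taking $p_0$ to the pole of $\sigma$ conjugates $\Psi(G)$ into the similarity group of $\mathbb{R}^{n+1}$, so $x$ is M\"{o}bius equivalent to $\sigma\circ f$ for an immersion $f:M^n\to\mathbb{R}^{n+1}$ homogeneous under a subgroup of $\textup{Sim}(\mathbb{R}^{n+1})$. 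I would then look at the dilation part of this subgroup: if it is trivial, $f$ is an isometric homogeneous hypersurface of $\mathbb{R}^{n+1}$, which, being umbilic-free, is a generalized cylinder $\mathbb{R}^m\times\mathbb{S}^{n-m}(r)$ with only two distinct principal curvatures, contradicting $r>2$; if it is nontrivial, decomposing $\mathbb{R}^{n+1}$ along the dilation direction shows $f$ is either a cone over a homogeneous hypersurface of a sphere, whence $C=0$ by Corollary~\ref{cone1}, or a logarithmic spiral cylinder, which again has only two distinct principal curvatures, contradicting $r>2$. So $C=0$ in both cases, and the hypothesis $r>2$ is used here only to discard the two-curvature examples.

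The hyperbolic case is the substance of the theorem. Here $G$ preserves an orthogonal splitting $\mathbb{R}^{n+3}_1=V_1\oplus V_2$, with $V_1$ Lorentzian --- on which $G$ acts as a hyperbolic-type subgroup of $\textup{O}^+(V_1)$ --- and $V_2$ Euclidean, on which $G$ acts by rotations; crucially $G$ need not split as a product of these two actions, and this skew coupling of a hyperbolic translation with a rotation is exactly the mechanism that produces the logarithmic spiral cylinder when $r=2$. Writing $p=(p_1,p_2)$ with $\langle p,p\rangle=0$ and expressing the M\"{o}bius position vector accordingly, so that the $V_1$-component of $Y$ traces a hyperbolic-type orbit and the $V_2$-component a round orbit, I would feed the explicit orbit description from Section~5 together with the constancy of all M\"{o}bius invariants into the structure equations --- above all the Blaschke--Codazzi relations \eqref{equa1} and \eqref{equa3} and the trace/norm identities \eqref{equa6} --- aiming to show that the one-dimensional distribution dual to $C$ (if $C\neq0$) cannot coexist with three or more distinct, constant M\"{o}bius principal curvatures, so that $C\equiv0$.

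I expect this last step to be the main obstacle: it requires reconciling the rigid explicit orbit structure of $Y(M^n)$ coming from Takagi--Takahashi with the Codazzi-type equations governing the eigendistributions of the M\"{o}bius second fundamental form $B$, and showing that the rotation/dilation coupling --- harmless, and even necessary, at $r=2$ --- becomes obstructed as soon as a third principal curvature appears. Concretely I anticipate a careful analysis of the connection forms $\omega_{ij}$ restricted to the $B$-eigenspaces and their pairing with $(C_i)$, combining the orbit geometry of Section~5 with the structure-equation computations of Section~6; the remaining verifications (conjugating into the standard models, checking $\rho$ and $H$ on the standard examples) I would package into lemmas.
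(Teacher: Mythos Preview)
Your overall framework---reduce to orbits in the Lorentz model and invoke the Di Scala--Olmos trichotomy (Theorem~\ref{4-1})---is exactly what the paper does. But you have swapped the easy and the hard case, and this swap introduces a genuine gap.

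In the paper's trichotomy, case~(ii) is ``$G$ has a unique non-trivial totally geodesic orbit'' and case~(iii) is ``all orbits lie in horospheres centred at a common point of $\partial_\infty\mathbb{H}^{n+2}$''. Your ``hyperbolic'' case (an invariant non-degenerate Lorentzian subspace with possible skew coupling) is the paper's case~(ii), and there the situation is \emph{easy}: uniqueness of the totally geodesic orbit forces $G$ to act irreducibly on the Lorentzian factor, hence by Theorem~\ref{4-20} the restriction is all of $\textup{O}^+(n-m,1)$, and $G$ genuinely \emph{does} split as a direct product $\textup{O}^+(n-m,1)\oplus G_1$ with $G_1\subset\textup{O}(m+2)$ (Lemma~\ref{lemm6-1}). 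The hypersurface is then a cone over a homogeneous hypersurface of $\mathbb{S}^{m+1}$ (Lemma~\ref{pro-3}), and $C=0$ follows from Corollary~\ref{cone1}. The ``skew coupling'' you worry about does not occur here; the logarithmic spiral cylinder lives in case~(iii), not case~(ii)---look at its group $G_e$ in \eqref{group}, which fixes the lightlike direction $(1,1,0,\dots,0)$.

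Conversely, your ``parabolic'' case is the paper's case~(iii), and this is where all the work is. Your argument there---``decomposing $\mathbb{R}^{n+1}$ along the dilation direction shows $f$ is either a cone \dots\ or a logarithmic spiral cylinder''---is not a proof; it is essentially asserting the classification you are trying to establish. A subgroup of $\textup{Sim}(\mathbb{R}^{n+1})$ with nontrivial dilation part can couple that dilation with rotations and translations in many ways, and there is no a priori decomposition that forces the orbit to be a cone or a spiral cylinder. The paper handles this case by a long structure-equation argument (Section~6): the key geometric input is Lemma~\ref{case2-11}, which shows that integral curves of $G$-equivariant vector fields on $Y(M^n)$ are plane curves in $\mathbb{R}^{n+3}_1$; this yields the algebraic relations \eqref{xxx-12}--\eqref{xxx-2}, from which one first proves $C_1C_2=C_3=\cdots=C_n=0$ (Lemma~\ref{lemm4-1}), then diagonalizes the Blaschke tensor (Lemma~\ref{lemm4-2}), and finally derives a contradiction from $r>2$ via the relation $2b_1=b_i+b_j$. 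Your proposal does gesture at ``reconciling the orbit structure with the Codazzi-type equations'', but you have placed that effort in the wrong case and omitted the concrete mechanism (plane-curve lemma $\Rightarrow$ vanishing of cross-terms $\Rightarrow$ diagonal $A$ $\Rightarrow$ contradiction) that actually drives the proof.
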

The proof of the Vanishing Theorem \ref{the1} is long and we postpone this to next section.
Now we use it to prove the classification Theorem \ref{th11}.

The case $r=2$ was already classified \cite{lim}.
\begin{Theorem}(\cite{lim})\label{the4-1}
Let $x: M^n\mapsto \mathbb{S}^{n+1}$ be a M\"{o}bius homogeneous hypersurface with two distinct principal curvatures.
Then $x$ is M\"{o}bius equivalent to one of
the following hypersurfaces:\\
(1) the standard torus $\mathbb{S}^k(r)\times \mathbb{S}^{n-k}(\sqrt{1-r^2})$, $1\leq k\leq n-1$;\\
(2) the images of $\sigma$ of the standard cylinder $\mathbb{S}^k(1)\times \mathbb{R}^{n-k}\subset \mathbb{R}^{n+1}$, $1\leq k\leq n-1$;\\
(3) the images of $\tau$ of  $\mathbb{S}^k(r)\times \mathbb{H}^{n-k}(\sqrt{1+r^2})$, $1\leq k\leq n-1$;\\
(4) the image of $\sigma$ of a logarithmic spiral cylinder.
\end{Theorem}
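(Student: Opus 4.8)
As this is the two-curvature case of \cite{lim}, I sketch the argument I would use. The plan is to combine homogeneity with the integrability conditions \eqref{equa1}--\eqref{equa6}. By Proposition \ref{x-y-1} the image $Y(M^n)$ is an orbit of a subgroup of $\textup{O}^+(n+2,1)$ acting linearly on $\mathbb{R}^{n+3}_1$, so every M\"{o}bius invariant is constant on $M^n$: the two M\"{o}bius principal curvatures $b_1$ (multiplicity $k$) and $b_2$ (multiplicity $n-k$) are constant and, by \eqref{equa6}, completely determined by $(n,k)$ through $kb_1+(n-k)b_2=0$ and $kb_1^2+(n-k)b_2^2=\tfrac{n-1}{n}$; likewise the normalized M\"{o}bius scalar curvature $s$ and the spectrum of the Blaschke tensor $A$ are constant. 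Fixing a local orthonormal frame $\{E_i\}$ adapted to the two eigendistributions $D_1,D_2$ of $B$ and using that the $b_i$ are constant, one gets $B_{ii,k}=0$ and $B_{ij,k}=(b_i+b_j)\,\omega_{ij}(E_k)$ for $i\neq j$. Substituting this into the Codazzi equations \eqref{equa3} expresses each $C_i$ through connection coefficients of $g$, and reading the resulting identities off distribution by distribution yields the dichotomy on which everything hinges: if $b_1+b_2=0$ (equivalently $n=2k$) then $C\equiv 0$, and if $C\neq 0$ then $b_1+b_2\neq 0$ and a further comparison of \eqref{equa3} with the Gauss equation \eqref{equa4} forces one of the two multiplicities to equal $1$.

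If $C\equiv 0$, then $x$ is a Dupin hypersurface with two constant M\"{o}bius principal curvatures, i.e. a M\"{o}bius isoparametric hypersurface with two distinct M\"{o}bius principal curvatures; the classification of these (the two-curvature case of \cite{lim3}, or a direct integration of the reduced structure equations) produces exactly the Clifford tori $\mathbb{S}^k(r)\times\mathbb{S}^{n-k}(\sqrt{1-r^2})$ together with the $\sigma$- and $\tau$-images of $\mathbb{S}^k(1)\times\mathbb{R}^{n-k}$ and $\mathbb{S}^k(r)\times\mathbb{H}^{n-k}(\sqrt{1+r^2})$, which are cases (1)--(3). If $C\neq 0$, say $\dim D_1=1$, one first shows that $x$ is, up to a M\"{o}bius transformation, the image under $\sigma$ of a Euclidean hypersurface $f:M^n\to\mathbb{R}^{n+1}$; the $(n-1)$-dimensional distribution $D_2$, carrying a constant M\"{o}bius principal curvature, then has leaves that are affine subspaces, so $f$ is a cylinder $\gamma\times\mathbb{R}^{n-1}$ over a plane curve $\gamma$. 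The remaining structure equations along $D_1$ — now involving only the curvature of $\gamma$ together with the already-fixed constants, via \eqref{equa1} and \eqref{equa2} — collapse to a first-order linear ODE for the logarithm of the curvature of $\gamma$, whose solutions are precisely the logarithmic spirals; this is case (4). Conversely, the explicit subgroups exhibited in Section~2 show that (1)--(4) are all M\"{o}bius homogeneous with two distinct principal curvatures.

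The technical heart is the case $C\neq 0$. Two points require real work: (a) proving the dichotomy, in particular that a nonzero M\"{o}bius form forces a unit-multiplicity eigendistribution, which is where one must play the Codazzi equations \eqref{equa3} against the Gauss equation \eqref{equa4} with $A$ constant; and (b) carrying out the M\"{o}bius normalization to $x=\sigma\circ f$ and checking that the $(n-1)$-dimensional leaves straighten to flat affine subspaces, after which the problem reduces to the single transverse curve $\gamma$ and its identification as a logarithmic spiral. Once homogeneity has frozen all the invariants, the $C\equiv 0$ branch and the recognition of the standard examples are comparatively routine; the case $n=2$, where ``two distinct principal curvatures'' is the generic situation, is classical and can be treated separately.
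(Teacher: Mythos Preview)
The paper does not prove this theorem; it is quoted verbatim from \cite{lim} as a known result and used as one of the two inputs (together with Theorem~\ref{the4-2}) to assemble the main classification. So there is no in-paper proof to compare your sketch against.

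Your overall architecture --- freeze the invariants by homogeneity, split into $C\equiv 0$ versus $C\not\equiv 0$, invoke the two-curvature M\"{o}bius isoparametric classification in the first branch, and in the second branch reduce to a cylinder over a plane curve and solve an ODE for that curve --- is sound and is essentially the strategy of \cite{lim}. But two concrete points in your write-up are wrong and, since the second is caused by the first, would derail the argument as stated.

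First, from the covariant-derivative formula for $B$ with constant eigenvalues one obtains $B_{ij,k}=(b_i-b_j)\,\omega_{ij}(E_k)$ for $i\neq j$, not $(b_i+b_j)$; compare \eqref{eqb11}. Second, your dichotomy ``$b_1+b_2=0$ (equivalently $n=2k$) $\Rightarrow C\equiv 0$'' is built on that wrong sign and is false: for $n=2$ the trace condition always gives $b_1+b_2=0$, yet the logarithmic spiral (case~(4) with $n=2$) has $C\not\equiv 0$. The correct mechanism is the one in Lemma~\ref{lemma1-31}: whenever a M\"{o}bius principal curvature has multiplicity at least $2$, all components of $C$ along that eigendistribution vanish (take $j\neq k$ in the same eigenspace and use $B_{jj,k}=B_{jk,j}=0$ in \eqref{equa3}). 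Hence $C\equiv 0$ as soon as \emph{both} multiplicities are $\geq 2$, and $C\not\equiv 0$ forces at least one multiplicity to equal $1$ --- no appeal to the Gauss equation or to constancy of $A$ is needed for this step. With this correction your outline goes through; the genuinely delicate part remains your point~(b), namely normalizing to a Euclidean cylinder and pinning down $\gamma$ as a logarithmic spiral, which is exactly where \cite{lim} spends its effort.
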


If the M\"{o}bius homogeneous hypersurface $x$ has $r(>2)$ distinct principal curvatures, then $C=0$ by the Vanishing Theorem \ref{the1}.
Thus the M\"{o}bius homogeneous hypersurface $x$ is a M\"{o}bius isoparametric hypersurface. A hypersurface in $\mathbb{S}^{n+1}$ is called a M\"{o}bius isoparametric hypersurface if its M\"{o}bius principal curvatures $\{b_1,\cdots,b_n\}$ are constant and the M\"{o}bius form vanishes.
In \cite{lim3}, the first and third author (collaborated with Jie Qing) proved that a hypersurface is Dupin hypersurface with constant M\"{o}bius curvatures if and only if it is a M\"{o}bius isoparametric hypersurface.
The M\"{o}bius curvatures $M_{ijk}$ of hypersurface are defined as the ratio of its principal curvatures (M\"{o}bius principal curvatures),
$$
M_{ijk}=\frac{\lambda_i-\lambda_j}{\lambda_i-\lambda_k}=\frac{b_i-b_j}{b_i-b_k}.
$$
Also in \cite{lim3}, we have classified the Dupin hypersurfaces with constant M\"{o}bius curvatures, that is, the M\"{o}bius isoparametric hypersurfaces were classified.
\begin{Theorem}\cite{lim3}\label{the4-2}
Let $M^n$ be a Dupin hypersurface in $\mathbb{S}^{n+1}$ with $r (\geq 3)$ distinct principal curvatures. If the  M\"{o}bius curvatures are constant, then locally $M^n$
is M\"{o}bius equivalent to one of the following hypersurfaces: \\
(1) an isoparametric hypersurface in $\mathbb{S}^{n+1}$;\\
(2) the image of the stereograph projection of a cone over an isoparametric hypersurface in $\mathbb{S}^k\subset \mathbb{R}^{k+1}\subset\mathbb{R}^{n+1}$.
\end{Theorem}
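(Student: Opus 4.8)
The plan is to carry out the whole argument inside the M\"{o}bius moving-frame apparatus of Section~3, reducing the classification first to a finite algebraic problem for the (constant) M\"{o}bius invariants and then to an integration that recognises the known models; I would proceed in three steps. \emph{Step 1 (reduction to the M\"{o}bius isoparametric case).} The constancy of the M\"{o}bius curvatures $M_{ijk}$, together with the universal affine normalizations $\sum_i B_{ii}=0$ and $\sum_{ij}B_{ij}^2=\tfrac{n-1}{n}$ of \eqref{equa6}, forces each M\"{o}bius principal curvature $\bar b_1,\dots,\bar b_r$ to be locally constant (here $r\ge 3$ is what pins down the affine class of $(b_1,\dots,b_n)$ from its cross-ratios). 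Feeding the Dupin condition into the identity $\sum_j B_{ij,j}=-(n-1)C_i$ of \eqref{equa3} then gives $C\equiv 0$. So from now on $M^n$ is M\"{o}bius isoparametric: $B$ has constant eigenvalues $\bar b_\alpha$ with constant multiplicities $m_\alpha$, and $C=0$.

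\emph{Step 2 (the algebraic structure of $A$).} Take an orthonormal frame $\{E_i\}$ diagonalising $B$, $B_{ij}=b_i\delta_{ij}$, with eigendistributions $V_1,\dots,V_r$. Putting $C=0$ into \eqref{equa2} yields $(b_i-b_j)A_{ij}=0$, so the Blaschke tensor $A$ is block-diagonal relative to the $V_\alpha$; putting $C=0$ into \eqref{equa1} yields $A_{ij,k}=A_{ik,j}$, i.e.\ $A$ is a Codazzi tensor. The Codazzi identity for $B$ in \eqref{equa3} becomes $(b_i-b_j)\omega_{ij}(E_k)=(b_i-b_k)\omega_{ik}(E_j)$; combined with the Dupin condition this controls all connection forms mixing two distinct eigendistributions and shows the leaves of each $V_\alpha$ are umbilic. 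Using these facts together with the Codazzi property of $A$, one shows that $A$ restricted to each $V_\alpha$ is a \emph{constant} multiple of the identity, $A|_{V_\alpha}=a_\alpha\,\mathrm{Id}$. Consequently \eqref{equa4} and the trace identity in \eqref{equa6} become algebraic relations among the finitely many constants $\bar b_\alpha,a_\alpha,m_\alpha$, and the M\"{o}bius scalar curvature $s$ is constant.

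\emph{Step 3 (integration and identification).} With $A,B$ constant matrices one integrates the structure equations. There are two mutually exclusive situations, distinguished by an explicit condition on $A,B$ obtained by solving $dv=0$ in the frame $\{Y,N,Y_i,\xi\}$: either (a) there is a nonzero constant vector $v\in\mathbb{R}^{n+3}_1$ with $\langle Y,v\rangle$ constant along $M^n$, or (b) there is not. In case (a), $v$ cannot be spacelike or null --- that would exhibit $x$ as an isoparametric hypersurface of $\mathbb{H}^{n+1}$ or $\mathbb{R}^{n+1}$, both of which have at most two distinct principal curvatures, against $r\ge 3$ --- so $v$ is timelike, $Y=\rho_0(1,x)$ with $\rho_0$ constant, and $x$ is M\"{o}bius equivalent to an isoparametric hypersurface of $\mathbb{S}^{n+1}$, i.e.\ case~(1). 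In case (b) one uses the constant invariants to produce an orthogonal splitting $\mathbb{R}^{n+3}_1=\mathbb{R}^{n-m+1}_1\oplus\mathbb{R}^{m+2}$ with $\tfrac1{\rho_0}Y=(i(t,y),u)$ for some hypersurface $u:M^m\to\mathbb{S}^{m+1}$; then Lemma~\ref{redu} identifies $x$ with the stereographic image of the cone over $u$, and constancy of the M\"{o}bius principal curvatures --- via the cone formula of Section~3 and Corollary~\ref{cone1} --- forces $u$ to be isoparametric in $\mathbb{S}^{m+1}$, i.e.\ case~(2). In both cases Theorem~\ref{isom}, together with the Cartan--M\"{u}nzner classification \cite{ca1,mu1,mu2}, spells out the concrete list.

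The hardest step will be the last assertion of Step~2, that the Blaschke tensor is a constant multiple of the identity on each curvature distribution: this requires a delicate analysis of the connection forms under the Dupin condition, and the eigendistributions of multiplicity $\ge 2$ have to be handled by an argument in the spirit of Cartan's study of isoparametric hypersurfaces \cite{ca1} --- most of the technical labour of the proof sits here. A secondary difficulty is the bookkeeping in case~(b) of Step~3, where one must recover exactly the product normal form required by Lemma~\ref{redu}.
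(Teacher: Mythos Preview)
This theorem is not proved in the present paper; it is quoted from \cite{lim3} and used as a black box in the deduction of Theorem~\ref{th11}. So there is no in-paper proof to compare your proposal against. That said, a few comments on the proposal itself are in order.

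Your Step~1 reaches the right conclusion but by the wrong mechanism. The divergence identity $\sum_j B_{ij,j}=-(n-1)C_i$ involves \emph{all} covariant derivatives $B_{ij,j}$, and in a principal frame with constant $b_i$ one has $\sum_j B_{ij,j}=\sum_{[b_j]\neq[b_i]}(b_i-b_j)\,\omega_{ij}(E_j)$; the Dupin condition says nothing directly about these connection coefficients, so the vanishing of $C$ does not drop out of that identity. The clean route is instead through the explicit formula \eqref{cre2}: in a principal frame $C_i=-\rho^{-2}\bigl[e_i(H)+(\lambda_i-H)\,e_i(\log\rho)\bigr]$, while constancy of $b_i=\rho^{-1}(\lambda_i-H)$ gives $e_i(\lambda_i)=e_i(H)+(\lambda_i-H)\,e_i(\log\rho)$; the Dupin condition $e_i(\lambda_i)=0$ then forces $C_i=0$ for every $i$. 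With that correction, your reduction ``Dupin $+$ constant M\"{o}bius curvatures $\Rightarrow$ M\"{o}bius isoparametric'' is sound, and it is exactly the equivalence the paper attributes to \cite{lim3}.

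Your Steps~2 and~3 are a reasonable sketch of how the classification in \cite{lim3} proceeds: once $C=0$ and the $b_i$ are constant, \eqref{equa2} forces $A$ to be block-diagonal along the $V_\alpha$, and the serious work is indeed to show each block is $a_\alpha\,\mathrm{Id}$ with $a_\alpha$ constant, after which the structure equations integrate and Lemma~\ref{redu} identifies the cone case. You have correctly located where the labour sits; just be aware that the argument showing $A|_{V_\alpha}=a_\alpha\,\mathrm{Id}$ is substantially more intricate than the one-line summary suggests, and that in the integration step the dichotomy is better organised by the linear relation $a_\alpha+\lambda\,\bar b_\alpha+\mu=0$ (for constants $\lambda,\mu$) than by searching for a constant vector directly.
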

Using these results, the M\"{o}bius homogeneous hypersurfaces with $r>2$ distinct principal curvatures are M\"{o}bius equivalent to
one of the following hypersurfaces: \\
(1) an isometric homogeneous hypersurface in $\mathbb{S}^{n+1}$;\\
(2) the image of the stereograph projection of a cone hypersurface over an isometric homogeneous hypersurface in $\mathbb{S}^k\subset \mathbb{R}^{k+1}\subset\mathbb{R}^{n+1}$.

Theorem \ref{the4-1} and Theorem \ref{the4-2} imply Theorem \ref{th11} and the classification immediately.

The remaining part of this paper is devoted to the proof of the key result, the Vanishing Theorem~\ref{the1}.

\section{The orbit geometry and three cases}

To prove the Vanishing Theorem \ref{the1}, we need some facts and properties about subgroups of $\textup{O}^+(n+2,1)$, which we refer to \cite{olmos}. This geometrical viewpoints will help us see clearly the underlying structure of these homogeneous hypersurfaces.

We consider the action of $\textup{O}^+(n+2,1)$ on the hyperbolic space,
which is nothing else but the isometry group of $\mathbb{H}^{n+2}$.
Let $\partial_{\infty}\mathbb{H}^{n+2}$ denote the boundary at infinity of the hyperbolic space $\mathbb{H}^{n+2}$, which is diffeomorphic to $PC^{n+1}\approx \mathbb{S}^{n+1}$.
A horosphere in $\mathbb{H}^{n+2}$ is the intersection of $\mathbb{H}^{n+2}$ with a degenerate hyperplane. Any horosphere is completely determined
by specifying a point in $\mathbb{H}^{n+2}$ and another point in $\partial_{\infty}\mathbb{H}^{n+2}$. It is a well known fact that horospheres are umbilical
flat submanifolds of codimension one which are isometric to a Euclidean space.

An action of a subgroup $G$ of $\textup{O}^+(n+2,1)$ on $\mathbb{R}^{n+3}_1$ is called
\emph{weakly irreducible}
if it leaves invariant degenerate subspaces only.
An action of a subgroup $G$ on a manifold $M$ is
\emph{transitive}
if $M$ is an orbit of $G$, i.e., $M=G\cdot p,~p\in M$.
\begin{Theorem}\cite{olmos}\label{4-20}
Let $G$ be a connected Lie subgroup of $\textup{O}^+(n+2,1)$ and assume that the action of $G$ on $\mathbb{R}^{n+3}_1$ is irreducible,
then $G=\textup{O}^+(n+2,1)$.
\end{Theorem}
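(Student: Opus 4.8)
The plan is to argue by contradiction, exploiting the Lorentzian signature of $\mathbb{R}^{n+3}_1$ together with the connectedness of $G$. Suppose $G \subsetneq \textup{O}^+(n+2,1)$ acts irreducibly on $\mathbb{R}^{n+3}_1$, so that $G$ leaves invariant \emph{no} proper subspace, degenerate or nondegenerate. The first step is to invoke the standard structure theory of irreducible orthogonal representations preserving a nondegenerate bilinear form: the commutant (the algebra of $G$-equivariant endomorphisms of $\mathbb{R}^{n+3}_1$) is a division algebra over $\mathbb{R}$, hence $\mathbb{R}$, $\mathbb{C}$, or $\mathbb{H}$, and correspondingly $G$ is (up to the relevant sign/scaling subtleties of the Lorentz form) of real, complex, or quaternionic type. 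The key observation is that in the complex or quaternionic case the ambient space carries a $G$-invariant complex structure $J$ (respectively a quaternionic structure), and one then checks that the symmetric form $\langle\cdot,\cdot\rangle$ together with $J$ produces a $G$-invariant $2$-form $\omega(u,v)=\langle Ju,v\rangle$; but a nondegenerate skew form on an \emph{odd}-dimensional space $\mathbb{R}^{n+3}_1$ cannot exist, which rules out the complex and quaternionic cases outright. Here the parity of $\dim \mathbb{R}^{n+3}_1 = n+3$ is not automatically odd, so this particular shortcut needs care — see the remark below — but the division-algebra dichotomy is the right organizing principle in any case.

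Granting that the representation is of real type, $G$ acts absolutely irreducibly, and its Zariski closure (or the identity component of its real algebraic hull) is a connected reductive subgroup $H \subseteq \textup{O}^+(n+2,1)$ acting irreducibly on $\mathbb{R}^{n+3}_1$. The next step is to classify such $H$. Because the invariant form has Lorentz signature $(n+2,1)$ and $H$ is connected, $H$ lies in $\textup{SO}^+(n+2,1)$; an irreducibly-acting connected reductive subgroup of an orthogonal group is either the full $\textup{SO}^+$ or comes from a list of "exceptional" irreducible orthogonal representations (symmetric-space embeddings, spin representations, adjoint representations, etc.). The decisive point is that every such exceptional representation preserves a \emph{positive-definite} or at least \emph{nondegenerate indefinite form of signature bounded away from Lorentzian} — concretely, a nontrivial irreducible representation of a compact or complex reductive group carries an invariant positive-definite form (when the group is compact) and in the noncompact case the signature of the invariant form is controlled by the structure of the symmetric space and is never $(k,1)$ for $k \geq 2$ except for the standard representation of $\textup{SO}^+(k+1,1)$ itself. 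Running through the (short) list of candidates and computing the signature of the invariant form in each case, one finds the only possibility with a Lorentzian invariant form is the defining representation, forcing $H = \textup{SO}^+(n+2,1)$ and hence, since $G$ is connected and Zariski-dense in $H$ with $H = \textup{SO}^+(n+2,1)$ having no proper connected subgroup of the same dimension, $G \supseteq \textup{SO}^+(n+2,1)$; combined with $G \subseteq \textup{O}^+(n+2,1)$ and connectedness we get $G = \textup{O}^+(n+2,1)$ (interpreting the identity component appropriately).

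The main obstacle I expect is the signature bookkeeping in the second step: one must verify that \emph{no} irreducible orthogonal representation of a proper connected reductive group preserves a form of Lorentz signature $(n+2,1)$. The clean way to handle this is to use that a Lorentz form is "almost definite" — it restricts to a positive-definite form on a codimension-one subspace — so an irreducibly-acting $G$ would have a hyperplane on which the induced form is definite, and a compactness/maximal-compact-subgroup argument ($K$-fixed vectors, or averaging) then shows the representation is very close to an honest compact one; the remaining slack is exactly one dimension, which one rules out by hand using that the normal bundle of a principal orbit in $\mathbb{H}^{n+2}$ would otherwise be forced to be trivial in an incompatible way. Alternatively — and this is probably what the source \cite{olmos} does — one reduces directly to the classification of \emph{cohomogeneity-one} actions on $\mathbb{H}^{n+2}$ and observes that an irreducible $G$ cannot fix a point, cannot fix a boundary point, and cannot be weakly irreducible, so it must be transitive on $\mathbb{H}^{n+2}$ in the relevant sense, pinning it down to the full isometry group. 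I would present the proof along these lines, citing \cite{olmos} for the detailed signature lemma and emphasizing the three-way case split (real/complex/quaternionic type, then within the real case definite-versus-Lorentzian) as the conceptual skeleton.
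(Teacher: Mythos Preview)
The paper does not give its own proof of this theorem; it is quoted verbatim from Di Scala--Olmos \cite{olmos} and used as a black box. So there is no in-paper argument to compare against, only the cited source.

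That said, your proposed route is quite different from what \cite{olmos} actually does. Di Scala and Olmos argue \emph{geometrically}: they study the orbit structure of $G$ on $\mathbb{H}^{n+2}$, show that a minimal $G$-orbit must be totally geodesic (this is the key lemma, using that the second fundamental form of an orbit is equivariant and exploiting properties of the normal bundle), and then observe that irreducibility of the $G$-action on $\mathbb{R}^{n+3}_1$ forces any totally geodesic orbit to be all of $\mathbb{H}^{n+2}$; transitivity on $\mathbb{H}^{n+2}$ together with irreducibility then pins $G$ down. Your approach is representation-theoretic (Schur commutant, real/complex/quaternionic type, then a signature classification of irreducible orthogonal representations). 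This is a legitimate strategy in principle, but as you yourself note, the parity shortcut for ruling out complex/quaternionic type does not work when $n+3$ is even, and the decisive step---that no proper connected reductive group has an irreducible orthogonal representation with invariant form of Lorentz signature---is asserted (``running through the short list'') rather than carried out.

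The more serious issue is circularity: your final paragraph proposes to ``cite \cite{olmos} for the detailed signature lemma,'' but that signature lemma \emph{is} essentially the content of the theorem you are trying to prove. If you want a self-contained representation-theoretic proof, you would need either Berger's holonomy classification (overkill) or a direct argument that an irreducibly acting connected subgroup of $\mathrm{O}(n+2,1)$ must act transitively on $\mathbb{H}^{n+2}$---and the cleanest known way to do the latter is precisely the minimal-orbit argument of \cite{olmos}. So your sketch, while structurally sensible, does not stand on its own and ultimately defers to the source it is meant to replace.
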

\begin{Theorem}\cite{olmos}\label{4-2}
Let $G$ be a connected Lie subgroup of $\textup{O}^+(n+2,1)$ and assume that the action of $G$ on  $\mathbb{R}^{n+3}_1$
is weakly irreducible. Then $G$ acts transitively either on $\mathbb{H}^{n+2}$ or on a horosphere of $\mathbb{H}^{n+2}$. Moreover, if $G$
acts irreducibly, then $G=\textup{O}^+(n+2,1)$.
\end{Theorem}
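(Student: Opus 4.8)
The plan is to reduce the weakly irreducible case to the two stated alternatives by analyzing the stabilizer of the unique invariant isotropic line (or plane) and exploiting the structure of parabolic subgroups of $\textup{O}^+(n+2,1)$. First I would observe that since the action of $G$ on $\mathbb{R}^{n+3}_1$ is weakly irreducible but (presumably) not irreducible, there is a nontrivial $G$-invariant subspace, and any minimal such subspace $V$ must be degenerate, hence its radical $V\cap V^\perp$ is again $G$-invariant and nonzero; minimality then forces $V$ to be a $G$-invariant isotropic line $L=\mathbb{R}\ell$ with $\ell\in\mathbb{C}^{n+2}_+$ a null vector. (If no invariant subspace existed, Theorem \ref{4-20} already gives $G=\textup{O}^+(n+2,1)$, which is the last sentence of the statement.) The key point is that $L$ determines a point $\xi_L\in\partial_\infty\mathbb{H}^{n+2}$ fixed by $G$, so $G$ sits inside the parabolic subgroup $P_{\xi_L}$ stabilizing that boundary point.

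Next I would use the Langlands/Levi decomposition of $P_{\xi_L}\subset \textup{O}^+(n+2,1)$: with respect to a Witt basis $\{\ell,\ell',e_1,\dots,e_n\}$ (with $\langle\ell,\ell'\rangle=1$, $\langle\ell,\ell\rangle=\langle\ell',\ell'\rangle=0$, and $e_i$ spanning a Euclidean $\mathbb{R}^n$), $P_{\xi_L}$ is the semidirect product of its unipotent radical $\cong\mathbb{R}^n$ (the horospherical translations) with a Levi factor $\cong(\mathbb{R}_{>0})\times \textup{O}(n)$, where $\mathbb{R}_{>0}$ acts by $\ell\mapsto t\ell$, $\ell'\mapsto t^{-1}\ell'$. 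The dichotomy in the theorem corresponds precisely to whether or not the projection of $G$ to the $\mathbb{R}_{>0}$-dilation factor is trivial. I would argue: (i) if the projection of $G$ to the dilation factor is nontrivial, then $G$ contains a hyperbolic element whose axis, together with enough of the unipotent/orthogonal part forced by weak irreducibility, lets $G$ move any interior point of $\mathbb{H}^{n+2}$ to any other; one then checks transitivity on $\mathbb{H}^{n+2}$ directly by noting that the orbit of a point is open (the isotropy representation has no invariant subspace complementary to the tangent directions, again by weak irreducibility) and closed (it is a complete totally geodesic-free homogeneous submanifold of maximal dimension), hence all of $\mathbb{H}^{n+2}$; (ii) if $G$ projects trivially to the dilations, then $G$ preserves each horosphere centered at $\xi_L$, and weak irreducibility of the linear action forces the image of $G$ in the isometry group $\mathbb{R}^n\rtimes\textup{O}(n)$ of such a horosphere to act transitively (an invariant proper affine subspace of the horosphere would lift to an invariant nondegenerate or smaller degenerate subspace of $\mathbb{R}^{n+3}_1$, contradicting weak irreducibility or minimality of $L$).

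I expect the main obstacle to be part (i): proving \emph{transitivity} on all of $\mathbb{H}^{n+2}$ rather than merely on some orbit. Weak irreducibility is a statement about the linear action on $\mathbb{R}^{n+3}_1$, and translating it into ``the orbit through an interior point is open'' requires showing the isotropy subalgebra together with the tangent-space image of $\mathfrak{g}$ exhausts everything — i.e., that $\mathfrak{g}$ is not contained in the stabilizer of any geodesic or lower-dimensional totally geodesic subspace, which would produce an extra invariant (nondegenerate) subspace. The cleanest route is probably to invoke de Rham--Wu type decomposition for the semi-Riemannian holonomy/orbit situation, or simply to cite the structure theory in \cite{olmos} for subgroups of $\textup{O}^+(n+2,1)$ acting on hyperbolic space, since the statement is attributed there; in a self-contained writeup one would instead carry out the Witt-basis computation above, checking case by case that any proper closed subgroup of $P_{\xi_L}$ whose linear action is weakly irreducible either contains the full dilation-plus-unipotent part (case (i)) or the full horospherical translation group (case (ii)). The final clause, ``if $G$ acts irreducibly then $G=\textup{O}^+(n+2,1)$,'' is immediate from Theorem \ref{4-20} and needs no separate argument.
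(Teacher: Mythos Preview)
The paper does not give its own proof of Theorem~\ref{4-2}: the result is quoted verbatim from Di~Scala--Olmos \cite{olmos} and used as a black box, so there is no argument in the paper to compare your proposal against.

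On its own merits, your outline is a reasonable route and is close in spirit to how the result is actually proved in \cite{olmos}. Two remarks. First, your dichotomy (trivial versus nontrivial projection to the dilation factor of the parabolic $P_{\xi_L}$) is the right one, and in case~(ii) your argument is essentially complete once you observe that a proper invariant affine subspace of the horosphere yields, after passing to its direction subspace in the Euclidean $\mathbb{R}^n$, a $G$-invariant \emph{nondegenerate} linear subspace of $\mathbb{R}^{n+3}_1$, contradicting weak irreducibility. Second, in case~(i) you correctly flag the real work: showing that a nontrivial dilation component together with weak irreducibility forces transitivity on $\mathbb{H}^{n+2}$. The cleanest way to close this gap within your framework is to argue at the Lie algebra level that weak irreducibility forces the full nilradical $\mathfrak{n}\cong\mathbb{R}^n$ to lie in the projection of $\mathfrak{g}$ (otherwise the orthogonal complement in $\mathbb{R}^n$ of the unipotent directions present in $\mathfrak{g}$ is a $G$-invariant spacelike, hence nondegenerate, subspace); once you have all horospherical translations and at least one genuine dilation, transitivity on $\mathbb{H}^{n+2}$ is immediate. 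Your ``open and closed orbit'' heuristic is vaguer than necessary and the phrase ``totally geodesic-free homogeneous submanifold of maximal dimension'' is not standard; replacing it with the nilradical argument above makes case~(i) as clean as case~(ii).
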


Let $M^k$ a submanifold  of $\mathbb{H}^{n+2}$, and $\nu(M^k)$ be its normal bundle. Let $exp: \nu(M^k)\to \mathbb{H}^{n+2}$ be the normal exponential map.
If $\xi$ is a parallel unit normal vector field on $M^k$, then one can construct a new set $M^k_t$  parallel to $M^k$ as follows,
$$
M^k_t=exp(t\xi(p)),~~ \forall p\in M^k.
$$
Now let $\mathbb{S}_q$ be a horosphere centered at $q\in \partial_{\infty}\mathbb{H}^{n+2}$ and
$\xi$ the unit normal vector field, then the foliation by
horospheres centered at $q$ coincides with the foliation $\mathbb{S}_t, t\in R$. It is a well known fact that the foliation by horospheres centered
$q\in \partial_{\infty}\mathbb{H}^{n+2}$ coincides with the foliation given by the intersection of $\mathbb{H}^{n+2}$ with a family of parallel degenerate
hyperplanes of $\mathbb{R}^{n+3}_1$.

\begin{lemma}\label{horo1}\cite{olmos}
If $M^k=G\cdot p,~~ p\in \mathbb{H}^{n+2}$, where $G$ acts by isometries on $\mathbb{H}^{n+2}$ and $\xi$ is an equivariant unit normal vector field over $M^k$,
we have that $M^{n+1}_t=G\cdot exp(t\xi(p))$ and therefore it is also a $G$-orbit.
If $exp(t\xi)$ is an immersion, then $M^k_t$ is the parallel manifold to $M^k$.
\end{lemma}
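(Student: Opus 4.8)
The plan is to exploit the compatibility between isometries and the normal exponential map, so that equivariance of $\xi$ translates directly into the orbit statement. First I would record the elementary fact underlying everything: if $g$ is an isometry of $\mathbb{H}^{n+2}$ that preserves a submanifold $M^k$, then for each $q\in M^k$ its differential $(dg)_q$ maps the normal space $\nu_q(M^k)$ isometrically onto $\nu_{g\cdot q}(M^k)$, and $g$ intertwines the normal exponential maps, i.e. $g\big(\exp_q(tv)\big)=\exp_{g\cdot q}\big(t\,(dg)_q v\big)$ for every normal vector $v$ and every $t$. This is immediate because isometries carry geodesics to geodesics and $\exp$ is built from geodesics. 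Since $M^k=G\cdot p$ is an orbit, every $g\in G$ preserves $M^k$, so this applies uniformly to all of $G$.

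Next I would invoke the equivariance hypothesis in the form $\xi(g\cdot q)=(dg)_q\big(\xi(q)\big)$ for all $g\in G$ and $q\in M^k$; specializing to $q=p$ gives $\xi(g\cdot p)=(dg)_p\big(\xi(p)\big)$, so $\xi$ is determined by the single vector $\xi(p)$. (Existence of such a $\xi$ requires $\xi(p)$ to be invariant under the linear isotropy action of $G_p$ on $\nu_p(M^k)$, which is part of the hypothesis; I would state this for clarity but not dwell on it.) Then the orbit computation is a one-line bookkeeping: for $g\in G$,
$$
g\big(\exp_p(t\xi(p))\big)=\exp_{g\cdot p}\big(t\,(dg)_p(\xi(p))\big)=\exp_{g\cdot p}\big(t\,\xi(g\cdot p)\big),
$$
and as $g$ ranges over $G$ the base point $g\cdot p$ sweeps out all of $M^k$. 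Hence
$$
G\cdot\exp(t\xi(p))=\{\exp_q(t\xi(q)):q\in M^k\}=M^k_t,
$$
which is exactly the assertion that the parallel family $M^k_t$ is again a single $G$-orbit.

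Finally, for the last clause I would simply note that when the map $q\mapsto\exp_q(t\xi(q))$ is an immersion, $M^k_t$ is by definition the parallel manifold to $M^k$ at oriented distance $t$, so the two descriptions agree. The argument is essentially formal, being a soft consequence of homogeneity together with the isometry-invariance of $\exp$; there is no serious obstacle. The only points demanding a little care are the bookkeeping separating \emph{equivariance} of $\xi$ from its well-definedness on the orbit, and the observation that when $\exp(t\xi)$ fails to be an immersion the set $M^k_t$ should still be read as the $G$-orbit $G\cdot\exp(t\xi(p))$ (possibly of dimension $<k$), which is precisely how it is used in the sequel.
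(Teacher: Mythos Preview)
Your argument is correct and is exactly the standard elementary proof of this fact. Note, however, that the paper does not supply its own proof of this lemma: it is quoted verbatim from Di~Scala--Olmos \cite{olmos}, so there is nothing to compare against. Your write-up is a clean justification of the cited statement and could stand in for the missing proof without change.
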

The following Theorem is key in describing subgroup behavior of $\textup{O}^+(n+2,1)$.
\begin{Theorem}\cite{olmos}\label{4-1}
Let $G$ be a connected Lie subgroup of $\textup{O}^+(n+2,1)$. If we consider the action of the subgroup  $G$ on $\mathbb{H}^{n+2}$. Then one of the following assertions holds true:\\
(i)  $G$ has a fixed point in $\mathbb{H}^{n+2}$.\\
(ii)  $G$ has a unique non-trivial totally geodesic orbit (possibly $\mathbb{H}^{n+2}$).\\
(iii) All orbits are contained in horospheres centered at the same point at $\partial_{\infty}\mathbb{H}^{n+2}$.
\end{Theorem}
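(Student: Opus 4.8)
The plan is to analyze the closure of an orbit in the geodesic compactification $\overline{\mathbb{H}^{n+2}}=\mathbb{H}^{n+2}\cup\partial_\infty\mathbb{H}^{n+2}$. Fix $p\in\mathbb{H}^{n+2}$ and set $\Lambda(G):=\overline{G\cdot p}\cap\partial_\infty\mathbb{H}^{n+2}$, the set of accumulation points of the orbit at infinity. Since $G$ acts by isometries, $d(g\cdot p,g\cdot p')\equiv d(p,p')$, so any two orbits stay within bounded Hausdorff distance; hence $\Lambda(G)$ is independent of $p$, is closed and $G$-invariant, and (as $G$ is connected) $G$ fixes it pointwise whenever it is finite. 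I would then treat the three cases $\Lambda(G)=\emptyset$, $|\Lambda(G)|=1$, $|\Lambda(G)|\ge 2$ and check that they give precisely (i), (iii) and (ii). If $\Lambda(G)=\emptyset$, then no orbit is unbounded --- an unbounded subset of $\mathbb{H}^{n+2}$ accumulates at $\partial_\infty$ by compactness of $\overline{\mathbb{H}^{n+2}}$ --- so $G$ is relatively compact, and the Cartan (Bruhat--Tits) fixed point theorem produces a $G$-fixed point in $\mathbb{H}^{n+2}$, namely the circumcenter of the orbit $G\cdot p$; this is (i). (Conversely a fixed point forces $G$ into a compact point-stabilizer, so this case is exactly the relatively compact one.)

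Next, suppose $\Lambda(G)=\{q\}$. Then $G$ fixes $q$, hence lies in the stabilizer of $q$, which in the upper half-space model with $q$ at infinity is the similarity group of $\mathbb{R}^{n+1}$. Let $c\colon G\to\mathbb{R}$ be the Busemann cocycle at $q$, i.e. $b_q\circ g=b_q+c(g)$ for a Busemann function $b_q$; it is a continuous homomorphism whose kernel is the subgroup preserving every horosphere centered at $q$. Were $c$ nontrivial, $G$ would contain a similarity of ratio $\neq 1$, and such an element has a second fixed point $q'\neq q$ on $\partial_\infty\mathbb{H}^{n+2}$, so $q'\in\Lambda(G)$ --- contradicting $|\Lambda(G)|=1$. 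Thus $c\equiv 0$, $b_q$ is $G$-invariant, and every orbit lies in a single level set of $b_q$, i.e. in one horosphere centered at $q$. This is (iii); Lemma \ref{horo1} exhibits the same picture, with orbits appearing as pieces of the horosphere foliation.

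Finally, suppose $|\Lambda(G)|\ge 2$. Two distinct points of $\Lambda(G)$ correspond to non-proportional null vectors, which span a Lorentzian plane, so the subspace $V\subseteq\mathbb{R}^{n+3}_1$ spanned by all null lines determined by $\Lambda(G)$ is $G$-invariant and Lorentzian of dimension $k+1\ge 2$; then $\Sigma:=V\cap\mathbb{H}^{n+2}$ is a $G$-invariant totally geodesic $\mathbb{H}^k$ ($k\ge 1$), minimal among totally geodesic submanifolds carrying $\Lambda(G)$ on their boundary, and $\Sigma=\mathbb{H}^{n+2}$ when $V=\mathbb{R}^{n+3}_1$. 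I claim the induced action of $G$ on $V$ is weakly irreducible. Indeed, a proper $G$-invariant nondegenerate subspace $V'$ of $V$ is either Lorentzian of dimension $\ge 2$ --- then $V'\cap\mathbb{H}^{n+2}$ is a strictly smaller $G$-invariant totally geodesic submanifold still containing $\Lambda(G)$ on its boundary (choose the base point on it), against minimality of $\Sigma$ --- or positive definite, and then its orthogonal complement in $V$ is a smaller invariant Lorentzian subspace (same contradiction) or a $G$-invariant timelike line, which forces a fixed point in $\mathbb{H}^{n+2}$ and hence $\Lambda(G)=\emptyset$, absurd. Applying Theorem \ref{4-2} in dimension $k$ to the image of $G$ in $\textup{O}^+(k,1)$, the group acts transitively on $\Sigma$ or on a horosphere of $\Sigma$; the latter would make $\Lambda(G)$ a single point. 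Hence $\Sigma$ is a non-trivial totally geodesic orbit --- case (ii). Uniqueness follows because any totally geodesic orbit $\Sigma'$ satisfies $\partial_\infty\Sigma'=\Lambda(G)=\partial_\infty\Sigma$ (the limit set equals the trace at infinity of any orbit) and a totally geodesic submanifold of $\mathbb{H}^{n+2}$ is the convex hull of its boundary sphere, so $\Sigma'=\Sigma$.

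The first two cases are soft, using only the fixed point theorem and the elementary structure of the stabilizer of a point at infinity. I expect the main obstacle to be the case $|\Lambda(G)|\ge 2$: one has to organize the interplay between the limit set, the subspace it spans, and the orbit structure so as to reduce cleanly to Theorem \ref{4-2} in the appropriate (possibly lower) dimension, verify weak irreducibility of the restricted action, and settle the uniqueness clause. All of the genuinely hard input is already packaged into Theorems \ref{4-20} and \ref{4-2}.
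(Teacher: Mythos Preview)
The paper does not give its own proof of this theorem: it is quoted verbatim from Di Scala and Olmos \cite{olmos} and used only as input for the case split in Section~5. There is therefore no in-paper argument to compare against.

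That said, your limit-set trichotomy is the standard route to this result (and is essentially the one in \cite{olmos}), and the sketch is correct. Two small points of polish. In the case $|\Lambda(G)|\ge 2$, the weak irreducibility of the restricted action on $V$ is cleanest argued as follows: if $V'\subsetneq V$ is a proper invariant Lorentzian subspace, choose the basepoint in $V'\cap\mathbb{H}^{n+2}$; then $\Lambda(G)\subset\partial_\infty(V'\cap\mathbb{H}^{n+2})$, so every null line over $\Lambda(G)$ already lies in $V'$, contradicting that $V$ is their span; an invariant positive-definite subspace is handled by passing to its (Lorentzian, proper) orthogonal complement in $V$, and an invariant timelike line would produce a fixed point in $\mathbb{H}^{n+2}$, forcing $\Lambda(G)=\emptyset$. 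For uniqueness, once you know $\Sigma$ is an orbit you have $\Lambda(G)=\partial_\infty\Sigma$, the \emph{entire} ideal boundary sphere of $\Sigma$; since a totally geodesic hyperbolic subspace is uniquely determined by its ideal boundary sphere (equivalently, by the linear span of the corresponding null directions in $\mathbb{R}^{n+3}_1$), any other totally geodesic orbit must coincide with $\Sigma$. Your phrase ``convex hull of its boundary sphere'' is correct in the $\mathrm{CAT}(-1)$ sense, but the linear-algebraic formulation matches the ambient Lorentzian setup of the paper more directly.
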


By Theorem \ref{4-1} we divide the proof of the Vanishing Theorem \ref{the1} into three cases, which correspond to the following three propositions.

\begin{PROPOSITION}\label{prop3-1}
Let $x: M^n\mapsto \mathbb{S}^{n+1} (n\geq 2)$ be a M\"{o}bius homogeneous hypersurface with $x(M^n)=\Psi(G)\cdot p$
for some subgroup $G\subset \textup{O}^+(n+2,1)$.
If the subgroup $G$ has a fixed point $q\in \mathbb{H}^{n+2}$, then the M\"{o}bius form of $x$ vanishes, i.e., $C=0$.
\end{PROPOSITION}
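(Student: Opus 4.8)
Here is the plan I would follow. The whole point is that a fixed point of $G$ in $\mathbb{H}^{n+2}$ is the same thing (up to conjugation) as $G$ sitting inside the ``spherical'' subgroup $\Psi^{-1}(\textup{O}(n+2))$, so that $x$ turns out to be M\"obius equivalent to an isometric homogeneous hypersurface of $\mathbb{S}^{n+1}$ (case (1) of Theorem \ref{th11}), and for such a hypersurface $C=0$ is a one-line computation.

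First I would use that $\textup{O}^+(n+2,1)$ is exactly the isometry group of $\mathbb{H}^{n+2}$, realized as the upper sheet $\{z\in\mathbb{R}^{n+3}_1:\langle z,z\rangle=-1,\ z_0>0\}$, and that it acts transitively on $\mathbb{H}^{n+2}$. Choose $\phi\in\textup{O}^+(n+2,1)$ with $\phi(q)=q_0:=(1,0,\dots,0)$ and set $G':=\phi G\phi^{-1}$; since $T(q)=q$ for all $T\in G$, the conjugated group $G'$ fixes $q_0$. A direct computation with the Lorentz scalar product (write $T\in\textup{O}^+(n+2,1)$ in block form and impose $Tq_0=q_0$ together with $TI_1T^t=I_1$) shows that the isotropy subgroup of $q_0$ is precisely $\Psi^{-1}(\textup{O}(n+2))=\{\mathrm{diag}(1,Q):Q\in\textup{O}(n+2)\}$, the group of isometric transformations of $\mathbb{S}^{n+1}$. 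Hence $G'\subset\Psi^{-1}(\textup{O}(n+2))$. This is the content of alternative (i) of Theorem \ref{4-1} in this case.

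Next I would put $\tilde x:=\Psi(\phi)\circ x$. By Theorem \ref{theorem-1} (equivalently by Proposition \ref{x-y-1}), $\tilde x$ is a M\"obius transform of $x$; it is again umbilic-free since M\"obius transformations preserve umbilicity, and $\tilde x(M^n)=\Psi(\phi)\Psi(G)\Psi(\phi)^{-1}\cdot\Psi(\phi)(p)=\Psi(G')\cdot\Psi(\phi)(p)$ is an $n$-dimensional orbit of a subgroup of $\textup{O}(n+2)$, i.e.\ an isometric homogeneous hypersurface in $\mathbb{S}^{n+1}$. Consequently every geometric invariant of $\tilde x$ is constant along $M^n$, because any two of its points are carried to one another by an isometry of $\mathbb{S}^{n+1}$; in particular the mean curvature $H$, the norm $\|II\|$, and hence the conformal factor $\rho$ with $\rho^2=\frac{n}{n-1}(\|II\|^2-nH^2)$ are constant. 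Substituting $e_i(H)=0$ and $e_j(\log\rho)=0$ into the expression \eqref{cre2} for the M\"obius form gives $\tilde C=0$. Since the M\"obius form is a M\"obius invariant — it is read off from the structure equations of the M\"obius position vector $Y$, which transforms under a M\"obius map by an element of $\textup{O}^+(n+2,1)$ acting isometrically on $\mathbb{R}^{n+3}_1$ — we conclude $C=\tilde C=0$, as desired.

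This argument is short, and I do not expect a genuine obstacle: the only points requiring a little care are the identification of the isotropy group of a point of $\mathbb{H}^{n+2}$ with a conjugate of $\Psi^{-1}(\textup{O}(n+2))$, and the verification that the conjugating element $\phi$ can be taken inside $\textup{O}^+(n+2,1)$ so that it induces a bona fide M\"obius transformation of $\mathbb{S}^{n+1}$. (The real difficulty in the Vanishing Theorem \ref{the1} is reserved for alternative (iii), the hyperbolic case, treated in section~6; cases (i) and (ii) are essentially warm-ups.)
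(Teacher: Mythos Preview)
Your proposal is correct and follows essentially the same route as the paper's own proof in \S5.1: conjugate $G$ by an element of $\textup{O}^+(n+2,1)$ sending the fixed point to $(1,0,\dots,0)$, identify the isotropy with $\Psi^{-1}(\textup{O}(n+2))$, conclude that $x$ is M\"obius equivalent to an isometric homogeneous hypersurface in $\mathbb{S}^{n+1}$, and then read off $C=0$ from \eqref{cre2} since $H$ and $\rho$ are constant. The only cosmetic difference is that you spell out the M\"obius invariance of $C$ explicitly at the end, which the paper leaves implicit.
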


\begin{PROPOSITION}\label{prop3-3}
Let $x: M^n\mapsto \mathbb{S}^{n+1} (n\geq 2)$ be a M\"{o}bius homogeneous hypersurface with $x(M^n)=\Psi(G)\cdot p$
for some subgroup $G\subset \textup{O}^+(n+2,1)$. If
the subgroup $G$ has a unique non-trivial totally geodesic orbit (possibly $\mathbb{H}^{n+2}$), then $C=0$.
\end{PROPOSITION}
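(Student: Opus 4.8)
The plan is to use the totally geodesic orbit to produce a $G$-invariant orthogonal splitting of the Lorentz space $\mathbb R^{n+3}_1$, to locate the canonical M\"obius position vector $Y$ with respect to it, and then to identify $x$ with a cone over a homogeneous hypersurface of a lower-dimensional sphere, whose M\"obius form vanishes by Corollary~\ref{cone1}.

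First I would set up the splitting. Let $N=G\cdot q$, $q\in\mathbb H^{n+2}$, be the unique non-trivial totally geodesic orbit, put $k:=\dim N$, and let $V=\operatorname{span}_{\mathbb R}N\subset\mathbb R^{n+3}_1$ be the $(k+1)$-dimensional Lorentzian subspace it spans. Since $N$ is $G$-invariant so is $V$, hence so is the positive-definite complement $W:=V^{\perp}$ ($\dim W=n+2-k$); thus $G$ embeds in $\textup{O}^{+}(V)\times\textup{O}(W)$, and I write $G_1,G_2$ for the two projections. Because $G$ is transitive on $N=\mathbb H^{k}$ and a totally geodesic $\mathbb H^k$ lies in no horosphere, Theorems~\ref{4-20}--\ref{4-2} applied to $G_1$ acting on $V$ show that $G_1$ acts on $\mathbb H^k$ as a transitive group of isometries; in particular the $G_1$-orbit of any timelike vector of $V$ is a full radius-$c$ hyperboloid $\cong\mathbb H^k$.

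Next I would locate $Y$. Write $Y(M^n)=G\cdot Y_0$ and decompose $Y_0=(a,b)$ with $a\in V$, $b\in W$, so $0=\langle Y_0,Y_0\rangle=\langle a,a\rangle_V+|b|^2$. Every $g\in G$ preserves the $V$- and $W$-metrics separately, so along the orbit the $V$-component has constant (non-positive) norm and the $W$-component has constant length; hence $Y(M^n)$ lies inside a fixed quadric of $V$ times a fixed sphere of $W$. The case $b=0$ (so $a$ is null and $Y(M^n)$ lies in the light cone of $V$) must be treated first: then $[Y(M^n)]$ is contained in $\partial_\infty N$, which either makes $x$ an open piece of a totally geodesic sphere --- umbilic, hence excluded --- or else yields a hypersurface with only $r\le 2$ distinct principal curvatures (the logarithmic-spiral phenomenon), which is outside the hypothesis of the Vanishing Theorem. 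So $a$ is timelike; put $c^2=-\langle a,a\rangle_V=|b|^2>0$. Then $G_1\cdot a=c\mathbb H^k\subset V$ is a full hyperboloid, $\Sigma:=G_2\cdot b$ is a homogeneous submanifold of the round sphere $\mathbb S^{\,n+1-k}(c)\subset W$, and $Y(M^n)\subseteq c\mathbb H^k\times\mathbb S^{\,n+1-k}(c)$ with the projection onto the first factor surjective. Since $Y(M^n)$ fibres over $c\mathbb H^k$ with mutually congruent fibres of dimension $n-k$, one obtains $\dim\Sigma\ge n-k$ and $k\le n$.

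The conclusion is then immediate in the \emph{untwisted} sub-case $\dim\Sigma=n-k$ (forcing $1\le k\le n-1$, the endpoint $k=n$ giving an umbilic $\mathbb H^n$): there $Y(M^n)$ is locally the metric product $c\mathbb H^k\times\Sigma$ with $\Sigma$ a homogeneous, hence isoparametric, hypersurface of $\mathbb S^{\,n+1-k}(c)$, so Lemma~\ref{redu} identifies $x$ locally with the cone in $\mathbb R^{n+1}$ over $u:=c^{-1}\Sigma\subset\mathbb S^{\,n+1-k}$, and Corollary~\ref{cone1} gives $C=0$ (when $\Sigma$ is totally geodesic one lands instead on a hyperbolic cylinder, for which $C=0$ is recorded in \cite{lim}). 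Thus the whole weight of the proof falls on the remaining \emph{twisted} possibility $\dim\Sigma>n-k$, in which $G\subsetneq G_1\times G_2$ genuinely screws the $V$-factor into the $W$-factor through a nontrivial homomorphism $G_1\to G_2$ --- this can occur only when $G_1$ is non-semisimple, e.g.\ an Iwasawa factor, and is exactly the mechanism producing the logarithmic spiral cylinder of Example~\ref{ex02}. I expect this to be the main obstacle, and I would attack it by feeding the twisted ansatz into the structure equations of Section~3: the Blaschke tensor, the M\"obius form and the M\"obius second fundamental form then pick up contributions proportional to the screwing velocity, and constancy of all the M\"obius invariants (forced by homogeneity) together with the hypothesis $r>2$ should force that velocity to vanish, returning us to the product case. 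Lemma~\ref{horo1}, which describes the parallel orbits of $N$, is the natural geometric device for organising this analysis.
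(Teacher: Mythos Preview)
Your overall strategy --- split $\mathbb R^{n+3}_1=V\oplus W$ along the span of the totally geodesic orbit, locate $Y$ with respect to this splitting, and then recognise $Y(M^n)$ as a product $c\mathbb H^k\times\Sigma$ so as to invoke Lemma~\ref{redu} and Corollary~\ref{cone1} --- is exactly the paper's line of argument (Lemmas~\ref{lemm6-1} and~\ref{pro-3}). The genuine gap is that you never exploit the \emph{uniqueness} hypothesis, and this is precisely what the paper uses to dispose of your ``twisted'' case in one stroke rather than by a structure-equation analysis.

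The paper argues as follows. If $G|_V$ were reducible on $V$, a proper $G$-invariant non-degenerate subspace of $V$ would yield a strictly smaller $G$-invariant Lorentzian subspace, hence a second totally geodesic orbit inside $\mathbb H^{k}$, contradicting both uniqueness and the transitivity of $G$ on $\mathbb H^k$. Thus $G|_V$ acts irreducibly, and Theorem~\ref{4-20} forces $G|_V$ to be the \emph{full} Lorentz group $\textup O^{+}(k,1)$. Once the $V$-projection is the entire simple non-compact group, there is no nontrivial homomorphism into the compact factor $\textup O(W)$, so $G=\textup O^{+}(k,1)\times G_2$ is a genuine direct product and $Y(M^n)=c\mathbb H^k\times(G_2\cdot b)$. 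Your twisted sub-case simply does not survive the uniqueness assumption; no appeal to the structure equations is needed here. You invoked Theorems~\ref{4-20}--\ref{4-2} only to obtain transitivity of $G_1$ on $\mathbb H^k$, but transitivity alone (e.g.\ by an Iwasawa $AN$) does not rule out twisting --- irreducibility does, and it is the uniqueness that delivers it.

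Two minor points. First, your dismissal of the case $b=0$ leans on ``$r\le 2$, outside the hypothesis of the Vanishing Theorem'', but Proposition~\ref{prop3-3} itself carries no hypothesis on $r$; in the paper's set-up this degenerate case is excluded simply because the resulting image fails to be an umbilic-free hypersurface. Second, your proposed treatment of the twisted case (``feed the ansatz into the structure equations \ldots\ constancy together with $r>2$ should force the screwing velocity to vanish'') is only a programme, not a proof; since the paper's irreducibility argument makes this case empty, the issue is moot, but as written that paragraph would not stand on its own.
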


\begin{PROPOSITION}\label{prop3-2}
Let $x: M^n\mapsto \mathbb{S}^{n+1} (n\geq 2)$ be a M\"{o}bius homogeneous hypersurface with $x(M^n)=\Psi(G)\cdot p$
for some subgroup $G\subset \textup{O}^+(n+2,1)$.
Suppose all orbits of $G$ in $\mathbb{H}^{n+2}$ are contained in horospheres centered at the same point $z\in \partial_{\infty}\mathbb{H}^{n+2}$. Moreover, assume the number of the distinct
 principal curvatures $r\geq 3$. Then $C=0$.
\end{PROPOSITION}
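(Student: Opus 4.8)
The plan is to manufacture a constant null vector of $\mathbb{R}^{n+3}_1$ out of the fixed point at infinity and then confront it with the Möbius structure equations. Since $G$ fixes $z\in\partial_\infty\mathbb{H}^{n+2}$, pick a null representative $v_0\in\mathbb{C}^{n+2}_+$; then $gv_0=\lambda(g)v_0$ for all $g\in G$, where $\lambda\colon G\to\mathbb{R}^+$ is a homomorphism — the dilation character of $G$ inside the stabilizer of $z$, which is the similarity group of $\mathbb{R}^{n+1}$. By Proposition \ref{x-y-1}, $Y(M^n)$ is the $G$-orbit of $\tilde p$, so I may transport the Möbius frame $\{Y,N,Y_1,\dots,Y_n,\xi\}$ over $M^n$ $G$-equivariantly; since the Möbius second fundamental form $B$ is a Möbius invariant, homogeneity forces the Möbius principal curvatures $b_1,\dots,b_n$ to be constant and their eigendistributions $G$-invariant, so I take the frame adapted to them. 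Write $v_0=aY+bN+\sum_i c_iY_i+d\xi$, i.e. $a=\langle v_0,N\rangle$, $b=\langle v_0,Y\rangle$, $c_i=\langle v_0,Y_i\rangle$, $d=\langle v_0,\xi\rangle$, with the nullity relation $2ab+\sum_ic_i^2+d^2=0$. Equivariance makes $(a,b,c_i,d)$ scale by $\lambda^{-1}$ along the orbit, so all its ratios are constant: $(a,b,c_i,d)=\psi(\alpha,\beta,\gamma_i,\delta)$ for real constants (rotating the frame inside each higher-multiplicity eigenspace so that $\gamma$ has at most one nonzero entry there) and a positive function $\psi$ with $\psi\circ\Psi(g)=\lambda(g)^{-1}\psi$; one checks $\beta\neq0$, since $\beta=0$ forces via the nullity relation that $v_0$ is a multiple of $Y$, impossible for a nonzero constant vector.

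I would then differentiate $v_0=aY+bN+\sum_ic_iY_i+d\xi$ with the structure equations and collect the coefficients of $Y$, $N$, $Y_i$, $\xi$ in $dv_0=0$. The $N$-coefficient gives $E_k(\log\psi)=\gamma_k/\beta=:\eta_k$, a constant; the $\xi$-coefficient gives the clean formula
\[
C_k=-\frac1\beta\bigl(b_k\gamma_k+\delta\eta_k\bigr)=-\frac{\gamma_k}{\beta}\Bigl(b_k+\frac{\delta}{\beta}\Bigr),
\]
so that, with $\tau:=-\delta/\beta$, the Möbius form is supported exactly on the eigendirections where $\gamma_k\neq0$ and $b_k\neq\tau$; the $Y_i$-coefficient expresses the Blaschke tensor $A$ (and the relevant connection forms) through $\gamma,\delta,\eta$ and the $b_i$. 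If $\lambda\equiv1$ then $\psi$ is constant, hence every $\eta_k=0$, hence every $\gamma_k=0$, hence $C=0$; so only a nontrivial dilation character remains — exactly the situation which, when $r=2$, yields the logarithmic spiral cylinders with $C\neq0$, so the hypothesis $r\geq3$ must be used in an essential way.

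For that case I would feed these expressions into the integrability conditions \eqref{equa1}--\eqref{equa4}. Homogeneity makes $x$ Dupin with constant Möbius principal curvatures, so $B_{ij,k}=(b_i-b_j)\omega_{ij}(E_k)$; comparing, for each pair of distinct eigenvalues, the two sides of \eqref{equa1}, of the first identity in \eqref{equa3}, and of \eqref{equa2}, and using $\sum_ib_i=0$ and $\sum_ib_i^2=(n-1)/n$ from \eqref{equa6}, produces an over-determined linear system in the constants $\alpha,\beta,\gamma_i,\delta,\eta_i$. Once there are three or more distinct values among $b_1,\dots,b_n$ — hence at least two of them different from $\tau$ — these cross-relations force $\gamma_k=0$ in every eigenspace with $b_k\neq\tau$, so $C\equiv0$. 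With only two distinct Möbius principal curvatures there is exactly one eigenvalue $\neq\tau$, no cross-relation is available, and the system keeps the one-parameter family realized by the logarithmic spiral cylinders. Together with Propositions \ref{prop3-1} and \ref{prop3-3}, this finishes the Vanishing Theorem \ref{the1}.

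The hard part is precisely this last step: the Blaschke tensor $A$ need not be codiagonalizable with $B$, and higher-multiplicity eigenspaces make the connection forms $\omega_{ij}$ genuinely intervene, so drawing from \eqref{equa1}--\eqref{equa4} enough independent scalar identities among the constants and the $b_i$ to kill $C$ — and pinning down that this is exactly where $r\geq3$ enters — demands a careful case analysis on the multiplicity pattern of the Möbius principal curvatures, which is the bulk of section~6.
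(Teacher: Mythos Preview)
Your setup takes a genuinely different route from the paper's, and in its first half it is cleaner. The paper never expands the fixed null vector $v_0$ in the M\"obius frame; its sole input from the horosphere hypothesis is Lemma~\ref{case2-11}: every $G$-equivariant vector field on $Y(M^n)$ has \emph{planar} integral curves in $\mathbb{R}^{n+3}_1$, so $\nabla_X\nabla_X X\parallel X$ for each such $X$. Taking $X=E_i$, $X=x_1E_i+x_2E_j$, and $X=x_1E_i+x_2E_\alpha$ and reading off coefficients produces the scalar identities \eqref{xxx-12}--\eqref{xxx-2}; these drive a two-step reduction (Lemma~\ref{lemm4-1}: $C_1C_2=C_3=\cdots=C_n=0$; Lemma~\ref{lemm4-2}: $A$ diagonalizes with $B$) and then a short contradiction in which $r\geq3$ finally enters via \eqref{ab2}--\eqref{ab3}. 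Your device---write $dv_0=0$ in the moving frame---yields $C_k=-\tfrac{\gamma_k}{\beta}(b_k-\tau)$ in one stroke and dispatches the no-dilation case $\lambda\equiv1$ instantly; that is more conceptual than anything in the paper.

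The gap is your last step. You assert that for $r\geq3$ the ``cross-relations'' from \eqref{equa1}--\eqref{equa4} force $\gamma_k=0$ whenever $b_k\neq\tau$, but you do not derive a single such relation, and your deferral to ``the bulk of section~6'' does not close it: section~6 is built entirely on the plane-curve identities \eqref{xxx-12}--\eqref{xxx-2}, not on the components of $dv_0=0$, so its arguments do not plug into your framework. Concretely, the $Y_j$-component of $dv_0=0$ reads
\[
\alpha\,\delta_{jk}+\beta A_{kj}+\gamma_j\eta_k+\sum_i\gamma_i\,\omega_{ij}(E_k)-\delta b_j\,\delta_{jk}=0,
\]
which couples $A$ to the connection forms in the multiple eigenspaces; extracting from this and the integrability conditions enough independent constraints to kill $\gamma$ (and to see precisely why two distinct curvatures do not suffice while three do) is exactly the hard analysis you have waved at but not performed. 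A minor side remark: ``homogeneity makes $x$ Dupin'' is not available here---in the paper Dupin is deduced only \emph{after} $C=0$---though the identity $B_{ij,k}=(b_i-b_j)\omega_{ij}(E_k)$ you actually use follows already from constancy of the $b_i$, so this slip is harmless.
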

The Vanishing Theorem \ref{the1} follows if we can prove these three propositions.

\subsection{{\bf Case 1}: $G$ has a fixed point in $\mathbb{H}^{n+2}$}
In this subsection, we prove Proposition \ref{prop3-1}.

Let $q=(y_0,y_1,\cdots, y_{n+1})\in \mathbb{H}^{n+2}$ be a fixed point, we can assume that $y_0> 0$, otherwise we take $-q$ instead of $q$.
There exists $T\in \textup{O}^+(n+2,1)$ such that
$$T(q)=(a,0,\cdots,0)^t,~~a>0.$$
Since $q$ is a fixed point, for all $h\in G$,
$$T\circ h\circ T^{-1}\cdot (a,0,\cdots,0)^t=Th\cdot q=T\cdot q=(a,0,\cdots,0)^t.$$
Setting $T\circ h\circ T^{-1}=\left(\begin{matrix}w&u\\
v&Q\end{matrix}\right),$ then
$$\left(\begin{matrix}w&u\\
v&Q\end{matrix}\right)\left(\begin{matrix}a\\0\\\vdots\\
0\end{matrix}\right)=\left(\begin{matrix}a\\0\\\vdots\\
0\end{matrix}\right),$$
which implies that $w=1, u=v=0$ and $Q\in \textup{O}(n+2)$, that is $TGT^{-1}\subset \Psi^{-1}(\textup{O}(n+2))$.
Thus the M\"{o}bius homogeneous hypersurface $M^n$ is an orbit of the isometry group $\textup{O}(n+2)$ of $\mathbb{S}^{n+1}$ up to a M\"{o}bius transformation.
So up to a  M\"{o}bius transformation the hypersurface $x(M^n)$ is an isometric homogeneous hypersurface, whose mean curvature and  principal curvatures are constant. We know that $C=0$ by equation (\ref{cre2}). Thus Proposition \ref{prop3-1} is proved.

\subsection{{\bf Case 2}:  $G$ has a unique non trivial totally geodesic orbit in $\mathbb{H}^{n+2}$}

To prove Proposition \ref{prop3-3}, we need the following lemmas.
\begin{lemma}\label{lemm6-1}
Let $x: M^n\mapsto \mathbb{S}^{n+1} (n\geq 2)$ be a M\"{o}bius homogeneous hypersurface with $x(M^n)=\Psi(G)\cdot p$.
If the group $G$ has a unique non-trivial totally geodesic orbit $\mathbb{H}^{n-m}$ in $\mathbb{H}^{n+2}$, then there exists a subgroup $G_1$
 of $\textup{O}(m+2)$ and $G$ is the direct product of $\textup{O}^+(n-m,1)$ and $G_1$, i.e., $$G=\textup{O}^+(n-m,1)\oplus G_1,~~1\leq m\leq n-1.$$
\end{lemma}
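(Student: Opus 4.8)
The plan is to convert the hypothesis ``$N:=\mathbb{H}^{n-m}$ is a totally geodesic orbit'' into a $G$-invariant orthogonal splitting of the Lorentz space $\mathbb{R}^{n+3}_1$, and then to analyse the two summands separately.

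First I would recall that a totally geodesic $\mathbb{H}^{n-m}$ inside $\mathbb{H}^{n+2}$ is precisely the trace on $\mathbb{H}^{n+2}$ of a Lorentzian linear subspace: putting $V:=\operatorname{span}(N)\subset\mathbb{R}^{n+3}_1$, one has $\dim V=n-m+1$, $V$ has signature $(n-m,1)$, and $N=\mathbb{H}^{n+2}\cap V$. Since $N$ is a $G$-orbit and $G$ acts on $\mathbb{R}^{n+3}_1$ by linear isometries, $G$ leaves $V$ invariant, hence also $V^\perp$; a count of signatures shows $V^\perp$ is positive definite of dimension $m+2$. Therefore $\mathbb{R}^{n+3}_1=V\oplus V^\perp$ with $V\cong\mathbb{R}^{n-m+1}_1$, $V^\perp\cong\mathbb{R}^{m+2}$, and $G$ lies in the block-diagonal subgroup $\textup{O}^+(n-m,1)\times\textup{O}(m+2)$ (equivalently, the normal bundle $\nu(N)$ is the trivial bundle $N\times V^\perp$, and $G$ acts on its fibres through its $V^\perp$-component). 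Let $G'$ and $G_1$ denote the images of $G$ under the two projections to $\textup{O}^+(n-m,1)$ and $\textup{O}(m+2)$.

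Next I would identify the two factors. Since $G$ acts on $N\subset V$ only through its $V$-component, transitivity of $G$ on $N\cong\mathbb{H}^{n-m}$ forces $G'$ to act transitively on $\mathbb{H}^{n-m}$; combining the structure theory of connected subgroups of $\textup{O}^+(n-m,1)$ acting transitively on $\mathbb{H}^{n-m}$ (see \cite{olmos}) with the observation that a non-trivial totally geodesic orbit cannot be contained in any horosphere (horospheres are flat, so their totally geodesic submanifolds are points) then gives $G'=\textup{O}^+(n-m,1)$. To upgrade the inclusion $G\subset G'\times G_1$ to a \emph{direct} product I would invoke Goursat's lemma: a connected subgroup of $G'\times G_1$ that projects onto both factors is their fibre product over a common quotient $Q$; but $G_1\subset\textup{O}(m+2)$ is compact whereas $G'=\textup{O}^+(n-m,1)$ is non-compact and has no non-trivial compact quotient, so $Q$ is trivial and $G=G'\oplus G_1=\textup{O}^+(n-m,1)\oplus G_1$ with $G_1\subset\textup{O}(m+2)$. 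The constraint $1\le m\le n-1$ is then immediate: $m=n$ would make $N$ a single point, while the case $N=\mathbb{H}^{n+2}$ is excluded because the orbit $x(M^n)\subset\partial_\infty\mathbb{H}^{n+2}$ must be a proper $n$-dimensional submanifold.

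The delicate step is the identification $G'=\textup{O}^+(n-m,1)$. A transitive isometry subgroup of $\mathbb{H}^k$ need not be the whole isometry group — the solvable $AN$-factor of an Iwasawa decomposition already acts simply transitively — so this really requires the structural input of \cite{olmos} together with the non-triviality of the totally geodesic orbit. I would also note that, for the purposes of the sequel, it is in fact enough to have $G=G'\oplus G_1$ with $G'$ merely a transitive subgroup of $\textup{O}^+(n-m,1)$, since replacing $G$ by $\textup{O}^+(n-m,1)\times G_1$ alters neither the orbit $N$ nor the hypersurface $x(M^n)$; the remaining ingredients — the linear splitting and the Goursat/compactness argument — are routine.
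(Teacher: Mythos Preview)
Your argument shares the paper's skeleton---the $G$-invariant orthogonal splitting $\mathbb{R}^{n+3}_1=V\oplus V^\perp$ with $V$ Lorentzian of dimension $n-m+1$---but diverges at the two substantive steps.

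For the identification of the $V$-factor with $\textup{O}^+(n-m,1)$, the paper does \emph{not} argue from transitivity. Instead it uses the one hypothesis you never touch, namely \emph{uniqueness} of the totally geodesic orbit: a proper non-degenerate $G$-invariant subspace of $V$ would cut out a strictly smaller invariant hyperbolic subspace (contradicting transitivity of $G$ on $\mathbb{H}^{n-m}$), while a degenerate one would fix a null direction; the paper reads this as forcing irreducibility of the restricted action on $V$, and then invokes Theorem~\ref{4-20} (a connected subgroup of $\textup{O}^+(k,1)$ acting irreducibly on $\mathbb{R}^{k+1}_1$ is the full group). Your substitute route---transitivity on $\mathbb{H}^{n-m}$ plus ``a totally geodesic orbit cannot sit in a horosphere''---does not pin down $G'$, exactly as you flag with the $AN$ example. (Incidentally, the parenthetical ``horospheres are flat, so their totally geodesic submanifolds are points'' is not correct: horospheres, being intrinsically Euclidean, have affine subspaces as totally geodesic submanifolds. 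What is true, and what you need, is that no \emph{ambient} totally geodesic $\mathbb{H}^j$ of positive dimension is contained in a horosphere.) Your closing observation---that for the downstream Lemma~\ref{pro-3} only transitivity of $G'$ on $\mathbb{H}^{n-m}$ is required, so one may harmlessly enlarge $G'$ to $\textup{O}^+(n-m,1)$ without changing any orbit---is correct and is arguably the cleanest way to read the whole argument.

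For the direct-product step, on the other hand, your Goursat/compactness argument is more explicit than the paper's, which simply asserts $G=G_0\oplus G_1$ immediately after noting that both summands $V$ and $V^\perp$ are $G$-invariant.
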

\begin{proof}
 Since $\mathbb{H}^{n-m} (1\leq m\leq n-1)$ is the unique non-trivial totally geodesic orbit of $G$ in $\mathbb{H}^{n+2}$, there exist a non-trivial subspace $V$ in $\mathbb{R}^{n+3}_1$ such that $\mathbb{H}^{n-m}\subset V$. Let the non-trivial subspace $V=\mathbb{R}^{n-m+1}_1$ up to a transformation of $\textup{O}^+(n+2,1)$,
thus
$$\mathbb{H}^{n-m}=\mathbb{R}^{n-m+1}_1\cap\mathbb{H}^{n+2}.$$
The orthogonal complement space $(\mathbb{R}^{n-m+1}_1)^{\bot}=\mathbb{R}^{m+2}$ of $\mathbb{R}^{n-m+1}_1$ is also an invariant subspace of $G$.
Thus there exist two subgroups $G_0, G_1$
such that $G$ is the direct product of $G_0$ and $G_1$, i.e., $G=G_0\oplus G_1$.
$G_1$ acts trivially on $\mathbb{R}^{n-m+1}_1$
and $G_0$ acts trivially on $(\mathbb{R}^{n-m+1}_1)^{\bot}$.
Since the non-trivial totally geodesic orbit is unique, then $G_0$ acts irreducibly
on $\mathbb{R}^{n-m+1}_1$.
By Theorem \ref{4-20} we have $G_0=\textup{O}^+(n-m,1)$. Since $\mathbb{R}^{n+3}_1=\mathbb{R}^{n-m+1}_1\oplus\mathbb{R}^{m+2}$, the invariant subspace $(\mathbb{R}^{n-m+1}_1)^{\bot}=\mathbb{R}^{m+2}$ is positive definite, then $G_1\subset Iso(\mathbb{R}^{m+2})$.
Since $G_1$ acts linearly on $\mathbb{R}^{m+2}$, then $G_1\subset\textup{O}(m+2)$.
Thus $G_1$ is a subgroup of $\textup{O}(m+2)$ and $G=\textup{O}^+(n-m,1)\oplus G_1.$
\end{proof}

\begin{lemma}\label{pro-3}
Let $x: M^n\mapsto \mathbb{S}^{n+1} (n\geq 2)$ be a M\"{o}bius homogeneous hypersurface with $x(M^n)=\Psi(G)\cdot p$.
Suppose the group $G=\textup{O}^+(n-m,1)\oplus G_1,~~1\leq m\leq n-1,$ where $G_1$ is a subgroup
 of $\textup{O}(m+2)$. Then $x$ is a cone hypersurface over a homogeneous hypersurface $u:M^m\to \mathbb{S}^{m+1}$.
\end{lemma}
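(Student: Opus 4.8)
The plan is to read the orbit $Y(M^n)=G\cdot p$ directly off the product structure $G=\textup{O}^+(n-m,1)\oplus G_1$, recognize it as the normal form \eqref{mop2} of the M\"{o}bius position vector of a cone, and then invoke Lemma \ref{redu}.

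First I would conjugate $G$ by a suitable $T\in\textup{O}^+(n+2,1)$ --- which replaces $x$ by a M\"{o}bius equivalent hypersurface and is therefore harmless --- so that, exactly as in Lemma \ref{lemm6-1}, one has an orthogonal splitting $\mathbb{R}^{n+3}_1=\mathbb{R}^{n-m+1}_1\oplus\mathbb{R}^{m+2}$ in which $\textup{O}^+(n-m,1)$ acts irreducibly on the Lorentzian factor and trivially on $\mathbb{R}^{m+2}$, while $G_1\subset\textup{O}(m+2)$ acts on $\mathbb{R}^{m+2}$ and trivially on $\mathbb{R}^{n-m+1}_1$. Writing the base point as $p=(p',p'')$ in this splitting, the two factors of $G$ act on complementary invariant subspaces, so
\begin{equation*}
Y(M^n)=G\cdot p=\bigl(\textup{O}^+(n-m,1)\cdot p'\bigr)\times\bigl(G_1\cdot p''\bigr),
\end{equation*}
and since $Y$ is an immersion of the $n$-manifold $M^n$ (Proposition \ref{x-y-1}) this product is $n$-dimensional, whence $\dim\bigl(\textup{O}^+(n-m,1)\cdot p'\bigr)+\dim\bigl(G_1\cdot p''\bigr)=n$.

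The next step is to identify the two factor orbits. From $\langle p,p\rangle=0$ with $p\in\mathbb{C}^{n+2}_+$ one gets $\langle p',p'\rangle=-|p''|^2\leq 0$. I would first rule out $p''=0$: in that case $p'$ would be a nonzero null vector, so its $\textup{O}^+(n-m,1)$-orbit would lie in the light cone of $\mathbb{R}^{n-m+1}_1$ and hence have dimension at most $n-m<n$ (recall $m\geq 1$), contradicting the dimension count. Thus $c:=|p''|>0$, $\langle p',p'\rangle=-c^2$, and by transitivity of $\textup{O}^+(n-m,1)$ on the upper branch of $\{\langle z,z\rangle=-c^2\}$,
\begin{equation*}
\textup{O}^+(n-m,1)\cdot p'=\{z\in\mathbb{R}^{n-m+1}_1\mid\langle z,z\rangle=-c^2,\ z_0>0\}=c\,\mathbb{H}^{n-m},
\end{equation*}
which is $(n-m)$-dimensional; consequently $\dim\bigl(G_1\cdot p''\bigr)=m$. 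Since $G_1$ preserves $|p''|=c$, this orbit lies on $\mathbb{S}^{m+1}(c)\subset\mathbb{R}^{m+2}$, and writing it as $c\,u(M^m)$ with $M^m:=G_1/(G_1)_{p''}$, the map $u:M^m\to\mathbb{S}^{m+1}$ is an immersed homogeneous hypersurface of $\mathbb{S}^{m+1}$.

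Finally, parametrizing the hyperbolic factor $c\,\mathbb{H}^{n-m}$ by the upper half-space model $c\,i(t,y)$ with $(t,y)\in\mathbb{R}^+\times\mathbb{R}^{n-m-1}$, exactly as in \eqref{mop2}, the above becomes $\tfrac{1}{c}\,Y=\bigl(i(t,y),u\bigr)$ with the positive constant $\rho_0=c$. This is precisely the hypothesis of Lemma \ref{redu}: the corresponding Euclidean hypersurface $f(t,y,q)=(y,t\,u(q))$ --- defined and immersed because the first two coordinates of $Y$ sum to $c/t>0$ --- is the cone over $u$, and hence $x=\sigma\circ f$ is the image under $\sigma$ of a cone over the homogeneous hypersurface $u$. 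I expect the only genuinely delicate point to be the dimension bookkeeping: one must use that $Y$ immerses $M^n$ both to force $p''\neq 0$ and to conclude that the sphere orbit $G_1\cdot p''$ has the full hypersurface dimension $m$ in $\mathbb{S}^{m+1}$ rather than some higher codimension; once that is in place, everything reduces to matching the normal form already recorded just before Lemma \ref{redu}.
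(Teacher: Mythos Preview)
Your argument is correct and follows essentially the same route as the paper: split $p=(\bar p_1,\bar p_2)$ along $\mathbb{R}^{n-m+1}_1\oplus\mathbb{R}^{m+2}$, normalize so that $\bar p_1\in\mathbb{H}^{n-m}$ and $\bar p_2\in\mathbb{S}^{m+1}$, read off $Y(M^n)=\mathbb{H}^{n-m}\times(G_1\cdot\bar p_2)$, and invoke Lemma~\ref{redu}. The paper's proof simply asserts that the scaling to $\langle\bar p_1,\bar p_1\rangle=-1$ is possible and that $G_1\cdot\bar p_2$ is a hypersurface in $\mathbb{S}^{m+1}$; your dimension bookkeeping (ruling out $p''=0$ and forcing $\dim(G_1\cdot p'')=m$) supplies exactly the justification the paper omits.
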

\begin{proof}
Since $\mathbb{R}^{n+3}_1=\mathbb{R}^{n-m+1}_1\oplus\mathbb{R}^{m+2}$, for $p\in\mathbb{C}^{n+2}_+$, $p=(\bar{p}_1,\bar{p}_2)$,
where $\bar{p}_1\in \mathbb{R}^{n-m+1}_1$ and $\bar{p}_2\in \mathbb{R}^{m+2}$. Since $\langle p,p\rangle=0$, up to a scaling, we can assume that
$$\langle \bar{p}_1,\bar{p}_1\rangle=-1,~~\langle \bar{p}_2,\bar{p}_1\rangle=1,$$
thus $\bar{p}_1\in \mathbb{H}^{n-m}\subset\mathbb{R}^{n-m+1}_1$ and $\bar{p}_2\in \mathbb{S}^{m+1}\subset\mathbb{R}^{m+2}$.
$$Y(M^n)=G\cdot p=\textup{O}^+(n-m,1)\oplus G_1\cdot (\bar{p}_1,\bar{p}_2)=\mathbb{H}^{n-m}\times G_1\cdot \bar{p}_2.$$
Since $G_1\subset \textup{O}(m+2)$, thus $G_1\cdot \bar{p}_2$ is a homogeneous hypersurface in $\mathbb{S}^{m+1}$.
Thus  $x$ is a cone hypersurface over a homogeneous hypersurface $u:M^m\to \mathbb{S}^{m+1}$ by Lemma \ref{redu}.
\end{proof}

Recalling Corollary \ref{cone1}, Proposition \ref{prop3-3} is proved.

\subsection{{\bf Case 3}: All orbits in $\mathbb{H}^{n+2}$ contained in horospheres centered at the same point at $\partial_{\infty}\mathbb{H}^{n+2}$}

In order to prove Proposition~\ref{prop3-2}, we need some preparation.

Since $Y:(M^n,g)\to \mathbb{C}^{n+2}_+\subset\mathbb{R}^{n+3}_1$ is an isometric immersion,
it is convenient to consider the M\"{o}bius homogeneous hypersurface $Y(M^n)$.
A vector field $X\in TY(M^n)\subset\mathbb{R}^{n+3}_1$ is called
\emph{$G$-equivariant}
if
$$h_*(X_p)=X_{h(p)},~~~ for ~~p\in M^n~~ and ~~h\in G.$$
If $X,Y$ are two $G$-equivariant vector fields on $M^n$, then $\nabla_XY$ also is $G$-equivariant for $G$ is a subgroup of the isometric group of $M^n$. Obviously, the simple principal vector fields  $\{E_1,E_2,\cdots,E_s\}$ are $G$-equivariant.

\begin{lemma}\label{case2-1}
Let $x: M^n\mapsto \mathbb{S}^{n+1} (n\geq 2)$ be a M\"{o}bius homogeneous hypersurface with $x(M^n)=\Psi(G)\cdot p$. If the vector field $X$ on $Y(M^n)$ is $G$-equivariant, then the connected integral curves are homogeneous.
\end{lemma}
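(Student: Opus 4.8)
The plan is to combine uniqueness of integral curves of $X$ with the fact that $Y(M^n)=G\cdot p$ is a single $G$-orbit. First I would record that $X$ has constant length: every $h\in G$ acts on $\mathbb{R}^{n+3}_1$ as a linear isometry, hence preserves the induced metric $g$ on $Y(M^n)$, so the $G$-equivariance $h_*(X_p)=X_{h(p)}$ gives $|X_{h(p)}|=|X_p|$ and $|X|$ is a constant $c\ge 0$ on the orbit. If $c=0$ the integral curves are points, which are trivially homogeneous, so assume $c>0$. Since $(M^n,g)$ is a homogeneous Riemannian manifold it is complete, and a vector field of constant length on a complete manifold is complete; therefore the maximal integral curve $\gamma:\mathbb{R}\to Y(M^n)$ through any point is defined on all of $\mathbb{R}$.

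Next I would observe that $G$ permutes the integral curves of $X$: for $h\in G$, differentiating $h\circ\gamma$ and using equivariance gives $(h\circ\gamma)'(t)=h_*(\gamma'(t))=h_*(X_{\gamma(t)})=X_{(h\circ\gamma)(t)}$, so $h\circ\gamma$ is again an integral curve of $X$. Now fix a connected integral curve $\gamma$ and take arbitrary $s,t\in\mathbb{R}$. Because $Y(M^n)$ is one orbit there is $h\in G$ with $h(\gamma(s))=\gamma(t)$; the two integral curves $\tau\mapsto h(\gamma(\tau))$ and $\tau\mapsto\gamma(\tau+t-s)$ agree at $\tau=s$, so by uniqueness they coincide, i.e.\ $h(\gamma(\tau))=\gamma(\tau+t-s)$ for all $\tau$. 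In particular $h$ maps $\gamma(\mathbb{R})$ onto itself and sends $\gamma(s)$ to $\gamma(t)$. The elements of $G$ obtained this way form a subgroup $G_\gamma$ (since $h_i\gamma(\tau)=\gamma(\tau+a_i)$ implies $h_1h_2\gamma(\tau)=\gamma(\tau+a_1+a_2)$) which acts transitively on $\gamma(\mathbb{R})$; hence the connected integral curve $\gamma(\mathbb{R})$ is homogeneous. The periodic case is the same, the curve then being a circle on which $G_\gamma$ acts transitively.

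The only step needing care is the global existence of $\gamma$, i.e.\ that its maximal domain is all of $\mathbb{R}$, because the parameter shift $\tau\mapsto\tau+t-s$ in the key step must make sense for all $\tau$; this is exactly where constancy of $|X|$ together with completeness of the homogeneous metric $g$ is used (alternatively, homogeneity forces a uniform time of existence for all integral curves, which then yields completeness directly). Everything else is an immediate consequence of equivariance and the uniqueness theorem for ODEs, so I do not expect further obstacles.
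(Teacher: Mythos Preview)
Your proof is correct and follows essentially the same approach as the paper: both use the transitivity of $G$ on $Y(M^n)$ to find, for any $q\in\gamma$, an element $h\in G$ with $h(p)=q$, then use $G$-equivariance of $X$ together with uniqueness of integral curves to conclude that $h$ carries $\gamma$ onto itself, so the subgroup $G_1=\{h\in G:h(p)\in\gamma\}$ acts transitively on $\gamma$. Your version is more careful in justifying global existence of $\gamma$ via completeness of the homogeneous metric and in checking that the relevant elements form a subgroup, points the paper leaves implicit.
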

\begin{proof}
Let $p\in Y(M^n)$ and $\gamma$ is the connected integral curve of $X$ through $p$. For any point $q\in \gamma$ and $p\neq q$, there exists an element $h\in G$ such that $h(p)=q$ because of the homogeneity of $Y(M^n)$. Since $X$ is $G$-equivariant, so $h_*(X_p)=X_{h(p)}=X_q$, and $h\cdot \gamma$ is a connected integral curve of $X$ through $hp$. Thus $h\cdot \gamma=\gamma$.
Define the subgroup $$G_1=\{h\in G|hp\in \gamma\}.$$
Therefore the connected integral curve $\gamma$ is homogeneous, which is a orbit of $G_1$.
\end{proof}

Let $\mathfrak{g}$ be the Lie algebra of $G$. For each $X^*$ in $\mathfrak{g}$ we denote $Exp(tX^*)$
the one-parameter subgroup of $G$ generated by $X^*$. The action of $Exp(tX^*)$ on $M^n$ turns it into a one-parameter group $\varphi_t$ of diffeomorphisms
of $M^n$, defined by
$$\varphi_t(p)=Exp(tX^*)p.$$
We will identify $X^*$ in $\mathfrak{g}$ with the vector field $X$ on $M^n$ generated by $\varphi_t$, i.e.,
$$X_p=\frac{d}{dt}|_{t=0}\varphi_t(p),~~p\in M^n.$$

Let $\Gamma_p$ be the isotropy subgroup at the point $p\in M$, that is
$$\Gamma_p=\{\varphi\in G|\varphi(p)=p\}.$$
 Let $\mathfrak{k}$ the Lie subalgebra of $\mathfrak{g}$ generated by $\Gamma_p$, then
there exists a subspace $\mathfrak{p}$ of $\mathfrak{g}$ such that $\mathfrak{g}$ is the direct sum
$$\mathfrak{g}=\mathfrak{k}\oplus\mathfrak{p}.$$
We can identify $\mathfrak{p}$ with $T_pM$. Thus the following result is obvious.
\begin{lemma}\label{case2-0}
Let $(M^n,g)$ be a $G$-homogeneous Riemannian manifold, and $\gamma$ is a $G$-homogeneous curve in $M^n$,
then there exists an one-parameter subgroup $G_1$ of $G$ such that the curve $\gamma$ is the orbit of $G_1$
\end{lemma}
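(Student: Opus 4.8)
The plan is to realize $\gamma$ as the orbit of a one-parameter subgroup obtained by exponentiating a single Killing field. Since $\gamma$ is a $G$-homogeneous curve, I would first write $\gamma=H\cdot p$ for a \emph{connected} subgroup $H\subseteq G$ acting transitively on $\gamma$; passing to the identity component costs nothing, because $\gamma$ is connected and each identity-component orbit is open in $\gamma$ (same tangent space, hence same dimension). Let $\mathfrak{h}$ be the Lie algebra of $H$ and $\mathfrak{h}_p\subseteq\mathfrak{h}$ the Lie algebra of the isotropy subgroup $H_p$; from the identification $T_p\gamma\cong\mathfrak{h}/\mathfrak{h}_p$ we get $\dim\mathfrak{h}-\dim\mathfrak{h}_p=\dim\gamma=1$. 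Choose $X^*\in\mathfrak{h}\setminus\mathfrak{h}_p$ and set $G_1:=\{\textup{Exp}(tX^*):t\in\mathbb{R}\}\subseteq H\subseteq G$, a one-parameter subgroup. Writing $X$ for the Killing field on $M^n$ induced by $X^*$, we have $X_p\neq0$ precisely because $X^*\notin\mathfrak{h}_p$.

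I would then show $G_1\cdot p=\gamma$ in two steps. First, since $X^*\in\mathfrak{h}$, the flow $\textup{Exp}(tX^*)$ of $X$ lies in $H$ and hence preserves the orbit $\gamma$; so $X$ is tangent to $\gamma$ along $\gamma$, and its restriction $X_\gamma:=X|_\gamma$ is a nontrivial Killing field of the induced metric $g|_\gamma$ on $\gamma$ (nontrivial because $(X_\gamma)_p=X_p\neq0$). Second, $(\gamma,g|_\gamma)$ is itself a homogeneous Riemannian manifold, $H$ acting transitively on it by isometries, hence complete; being one-dimensional and connected it is isometric to $(\mathbb{R},dt^2)$ or to a circle $(S^1,d\theta^2)$. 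On either model every nontrivial Killing field equals $c\,\partial_t$ with $c\in\mathbb{R}\setminus\{0\}$ in the arc-length parameter, so $X_\gamma$ is nowhere zero and its flow is transitive on $\gamma$. Since the flow of $X_\gamma$ through $p$ is exactly $\{\textup{Exp}(tX^*)\cdot p:t\in\mathbb{R}\}=G_1\cdot p$, we conclude $G_1\cdot p=\gamma$.

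A remark on where care is needed. Given the splitting $\mathfrak{g}=\mathfrak{k}\oplus\mathfrak{p}$ with $\mathfrak{p}\cong T_pM$, it is tempting simply to take $X^*\in\mathfrak{p}$ corresponding to the line $T_p\gamma$; but the resulting one-parameter subgroup need not have its orbit contained in $\gamma$ (a circle through $p$ but not centred at $p$ is an orbit of a rotation subgroup of the Euclidean group, while the translation subgroup tangent to it at $p$ sweeps out a straight line). This is exactly why one must take $X^*$ inside the Lie algebra of the subgroup $H$ that sweeps out $\gamma$, which guarantees both $X_p\neq0$ and invariance of $\gamma$ under the flow. The only non-formal input is the last step — upgrading "$G_1\cdot p$ is a nonsingular, hence open, sub-curve of $\gamma$" to "$G_1\cdot p=\gamma$" — which I would obtain from completeness of the homogeneous metric $g|_\gamma$ together with the trivial classification of Killing fields on a complete $1$-manifold; alternatively one could use uniqueness of integral curves of $X_\gamma$ to exclude any zero and then a connectedness argument, but the Riemannian route is cleanest.
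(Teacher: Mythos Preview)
Your argument is correct and, in fact, considerably more careful than what the paper offers. The paper does not give a proof of this lemma at all: it merely records the decomposition $\mathfrak{g}=\mathfrak{k}\oplus\mathfrak{p}$ with $\mathfrak{p}\cong T_pM$ for the isotropy $\mathfrak{k}$ of $G$ at $p$, and then declares the lemma ``obvious''. Read literally, that sketch suggests picking $X^*\in\mathfrak{p}$ representing the tangent line $T_p\gamma$ and setting $G_1=\textup{Exp}(tX^*)$---precisely the na\"{\i}ve step you warn against in your final remark, and your circle-versus-line example in the Euclidean plane shows that this can genuinely fail.

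Your fix---working inside the Lie algebra $\mathfrak{h}$ of a connected subgroup $H$ with $H\cdot p=\gamma$, so that the one-parameter subgroup automatically preserves $\gamma$---is exactly what is needed, and your completeness argument (a homogeneous connected $1$-manifold is $\mathbb{R}$ or $S^1$, and a nontrivial Killing field on either is nowhere vanishing with transitive flow) cleanly upgrades ``$G_1\cdot p$ is open in $\gamma$'' to equality. So your route is the same in spirit as the paper's (produce $G_1$ by exponentiating a single element of the Lie algebra), but you supply the missing ingredient---choosing $X^*\in\mathfrak{h}$ rather than merely in a complement $\mathfrak{p}\subset\mathfrak{g}$---that the paper's one-line justification leaves implicit.
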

\begin{lemma}\label{case2-11}
Let $x: M^n\mapsto \mathbb{S}^{n+1} (n\geq 2)$ be a M\"{o}bius homogeneous hypersurface with $x(M^n)=\Psi(G)\cdot p$,  and
all orbits of $G$ in $\mathbb{H}^{n+2}$ are contained in horospheres centered at the same point  $z\in \partial_{\infty}\mathbb{H}^{n+2}$. Let $X$ be an $G$-equivariant vector field on $Y(M^n)$ and $\gamma$ its integral curves, then $\gamma$ is a plane curve in $\mathbb{R}^{n+3}_1$.
\end{lemma}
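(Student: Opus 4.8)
The plan is to exploit the special structure of the hyperbolic case: if all orbits of $G$ in $\mathbb{H}^{n+2}$ sit inside horospheres centered at a single point $z\in\partial_\infty\mathbb{H}^{n+2}$, then after a M\"obius transformation we may take $z=[Z]$ for a fixed lightlike vector $Z\in\mathbb{C}^{n+2}_+$, and the foliation of $\mathbb{H}^{n+2}$ by these horospheres is cut out by the family of parallel degenerate hyperplanes $\{\langle\,\cdot\,,Z\rangle=\text{const}\}$ in $\mathbb{R}^{n+3}_1$. The key algebraic consequence is that every $h\in G$ fixes the line $\mathbb{R}Z$; since $G\subset\textup{O}^+(n+2,1)$, each $h$ acts on $Z$ by a positive scalar, and in fact (because $G$ preserves each horosphere's position relative to the others, as the orbit lies in one horosphere) one gets $h(Z)=Z$ for all $h\in G$, i.e. $Z$ is a genuinely $G$-fixed vector in $\mathbb{R}^{n+3}_1$. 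I would first nail down this statement precisely, using Theorem~\ref{4-1}(iii) together with the identification of horosphere foliations with parallel degenerate hyperplanes recalled just before the statement.

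Next I would transfer this to the level of the Lie algebra and the moving frame. Differentiating $h(Z)=Z$ along one-parameter subgroups shows that the infinitesimal generators $X^*\in\mathfrak{g}$ annihilate $Z$: $X^*(Z)=0$ as endomorphisms of $\mathbb{R}^{n+3}_1$. Now let $X$ be a $G$-equivariant vector field on $Y(M^n)$ with integral curve $\gamma(t)\subset Y(M^n)\subset\mathbb{R}^{n+3}_1$. By Lemma~\ref{case2-1} and Lemma~\ref{case2-0}, $\gamma$ is the orbit of a one-parameter subgroup $G_1=\{\mathrm{Exp}(tX^*)\}$ of $G$, so $\gamma(t)=\mathrm{Exp}(tX^*)\cdot\gamma(0)$; hence $\gamma'(t)=X^*\,\gamma(t)$ and more generally every derivative $\gamma^{(k)}(t)=(X^*)^k\gamma(t)$ lies in the linear span of $\{(X^*)^j\gamma(0):j\ge 0\}$ inside $\mathbb{R}^{n+3}_1$. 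Since $X^*\in\mathfrak{o}(n+2,1)$ is a fixed linear map, this span is a $G_1$-invariant, hence $X^*$-invariant, linear subspace $V\subset\mathbb{R}^{n+3}_1$ containing $\gamma$. To conclude that $\gamma$ is a \emph{plane} curve — i.e. that $V$ can be taken $2$-dimensional — I would argue that $X^*$, being skew with respect to $\langle\,,\rangle$ and annihilating the lightlike vector $Z$, has a very restricted Jordan structure: on the degenerate invariant flag $\mathbb{R}Z\subset Z^{\perp}\subset\mathbb{R}^{n+3}_1$ the nilpotent part of $X^*$ has at most one nontrivial $2$-step block, so the $X^*$-cyclic subspace generated by any vector that is tangent to a horosphere is at most two-dimensional; equivalently $(X^*)^2\gamma(0)$ already lies in the span of $\gamma(0)$ and $X^*\gamma(0)$, so $\gamma$ stays in an affine $2$-plane.

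The main obstacle I expect is precisely this last linear-algebra step: controlling the Jordan form of an arbitrary element $X^*$ of the parabolic subalgebra $\mathrm{stab}(\mathbb{R}Z)\cap\mathfrak{o}(n+2,1)$ and showing that its action on the relevant tangent directions is "at most parabolic of step two." One must rule out the possibility that the elliptic (rotational) part of $X^*$ in the Euclidean factor $Z^{\perp}/\mathbb{R}Z$ mixes with the translation part to produce a genuinely three-dimensional cyclic orbit (a spiral crossed with a rotation). I anticipate that the correct statement is that $\gamma$ lies in a $2$-plane only after one further uses that $X$ is a \emph{principal} (curvature) direction and that $Y(M^n)$ lies in the light cone $\langle Y,Y\rangle=0$; the constraint $\langle\gamma(t),\gamma(t)\rangle=0$ forces $\langle\gamma,\gamma''\rangle=-\langle\gamma',\gamma'\rangle=\text{const}$, which, combined with $X^*$-invariance, pins down the span. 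So the plan is: (1) produce the $G$-fixed lightlike vector $Z$; (2) realize $\gamma$ as a $G_1$-orbit and express its derivatives via the fixed endomorphism $X^*$; (3) use skewness, $X^*Z=0$, and the lightlike condition on $Y(M^n)$ to bound the cyclic subspace by dimension two, giving the planarity of $\gamma$.
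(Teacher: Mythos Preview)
Your first two moves are correct and match the paper: in the horosphere case the hypothesis forces $G$ to fix the lightlike vector $Z$ itself (not merely the line $\mathbb{R}Z$), so every $X^*\in\mathfrak g$ satisfies $X^*Z=0$; and Lemmas~\ref{case2-1}--\ref{case2-0} give $\gamma(t)=\mathrm{Exp}(tX^*)\,\gamma(0)$ for a one-parameter subgroup $G_1\subset G$.

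The gap is in step~(3), and it does not close along the lines you propose. Writing $X^*$ in a null frame $\{Z,W,e_2,\dots,e_{n+2}\}$ one has
\[
X^*=\begin{pmatrix}0&0&u^{t}\\ 0&0&0\\ 0&-u&B\end{pmatrix},\qquad B\in\mathfrak{so}(n+1),
\]
i.e.\ $X^*$ is an arbitrary element of the Euclidean algebra $\mathfrak e(n+1)$. When $B$ has more than one rotation plane the cyclic subspace $\operatorname{span}\{(X^*)^k\gamma(0):k\ge0\}$ through a generic lightlike $\gamma(0)$ is strictly larger than three-dimensional; a direct computation with, say, two incommensurate rotation blocks and $u$ meeting both planes already gives $(X^*)^3\gamma(0)\notin\operatorname{span}\{\gamma(0),X^*\gamma(0),(X^*)^2\gamma(0)\}$. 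The scalar identities you invoke, $\langle\gamma,\gamma\rangle\equiv0$ and $\langle\gamma,\gamma''\rangle=-\langle\gamma',\gamma'\rangle$, constrain inner products but cannot collapse the cyclic subspace. Finally, the lemma is stated for an \emph{arbitrary} $G$-equivariant field $X$, so ``$X$ is a principal direction'' is not available as an extra hypothesis.

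The paper takes a genuinely different route: rather than controlling the Jordan type of $X^*$, it works geometrically in $\mathbb H^{n+2}$ first. Applying Theorems~\ref{4-20}--\ref{4-2} to the one-dimensional group $G_1$, it argues that on a suitable $G_1$-invariant Lorentzian $3$-space $V\ni z$ the group $G_1$ is transitive on a horocycle of $\bar{\mathbb H}^{2}=V\cap\mathbb H^{n+2}$. Then Lemma~\ref{horo1} (equivariance of parallel orbits) upgrades this to the statement that \emph{every} $G_1$-orbit in $\mathbb H^{n+2}$ is a horocycle, hence lies in a degenerate affine $2$-plane $\Pi(\bar p)$. The paper checks that these planes, together with their reflections in $\mathbb H^{n+2}_-$ and the hyperplane $\{\langle\cdot,z\rangle=0\}$, cover $\mathbb R^{n+3}_1$ and are each $G_1$-invariant (by writing any point of $\Pi(\bar p)$ as an affine combination of three horocycle points). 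Hence $\gamma(0)\in\mathbb C^{n+2}_+$ lies in some $\Pi(\bar p)$ and $\gamma=G_1\cdot\gamma(0)$ stays there. In short, planarity is read off from the horocycle orbit structure of $G_1$ in hyperbolic space and then transported to the light cone, rather than deduced from a linear-algebraic bound on $X^*$.
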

\begin{proof}
Since $X$ is $G$-equivariant, then the integral curves are homogeneous. Now Let $\gamma$ be an integral curve of $X$ passing $p_0\in Y(M^n)$, then there exists an one-parameter subgroup $G_1$ of $G$ such that the integral curve $\gamma$ is the orbit of $G_1$, i,e, $\gamma=G_1\cdot p_0$.

Since $G_1$ is an one parameter subgroup, then for almost all $\bar{p}\in \mathbb{H}^{n+2}$, the orbit $G_1\cdot \bar{p}$ is a curve $\tilde{\gamma}$ in $\mathbb{H}^{n+2}$.
Now let $V$ be an invariant subspace and $z\in V$, and we assume $G_1$ acts irreducibly (or weakly irreducibly) on $V$, then $dim V=3$.
Otherwise,
if $dim V=l>3$, by Theorem \ref{4-2} and Theorem \ref{4-20}, either $G_1$ acts transitively on $\mathbb{H}^{l-1}\subset V$ or $G_1$ acts transitively on a horosphere of $\mathbb{H}^{l-1}\subset V$, which is a contradiction because the orbit $G_1\cdot \bar{p}$ is a curve in $\mathbb{H}^{l-1}$.
Since all orbits of $G$ in $\mathbb{H}^{n+2}$ are contained in horospheres centered at the same point  $z\in \partial_{\infty}\mathbb{H}^{n+2}$, thus all orbits of $G_1$ in $\mathbb{\bar{H}}^2=V\cap \mathbb{H}^{n+2}$ are included in horocycles of $\mathbb{\bar{H}}^2$ centered at $z$. The action of $G_1$ on $V$ is weakly irreducible, thus $G$ acts transitively  on a horocycle of $\mathbb{H}^{2}$. So the orbit $\tilde{\gamma}=G_1\cdot \bar{p}$ is a horocycle in $\mathbb{H}^{n+2}$.

Since $\tilde{\gamma}$ is a horocycle, its normal bundle in $\mathbb{H}^{n+2}$ is trivial, then there exists a parallel normal vector field $\xi$ of $\tilde{\gamma}$ in $\mathbb{H}^{n+2}$.
We define a parallel curve $\gamma_{\xi}$ of $\tilde{\gamma}$ in $\mathbb{H}^{n+2}$, then
$$\gamma_{\xi}=\{exp_{\bar{q}}(\xi_{\bar{q}})|\bar{q}\in G_1\cdot \bar{p}\}=G_1\cdot exp_{\bar{p}}(\xi_{\bar{p}})$$
 is the orbit through $exp_{\bar{p}}(\xi_{\bar{p}})$ of $G_1$. Thus each orbit of $G_1$ in $\mathbb{H}^{n+2}$ can be obtained in this manner from a single principal orbit $\tilde{\gamma}$ by Lemma \ref{horo1}.

Since each horocycle in $\mathbb{\bar{H}}^2$ is the intersection of $\mathbb{\bar{H}}^2$ with a degenerate plane $\prod$, so for all $\bar{p}\in \mathbb{H}^{n+2}$, the orbit $G_1\cdot \bar{p}$ is a  horocycle.

Let $\mathbb{H}^{n+2}_-=\{-\bar{p}|\bar{p}\in\mathbb{H}^{n+2}\}$. Since The action of $G_1$ on $\mathbb{R}^{n+3}_1$ is linear, thus for all $\bar{p}\in \mathbb{H}^{n+2}_-$, the orbit $G_1\cdot \bar{p}$ is a horocycle.

Since for a point $\bar{p}\in \mathbb{H}^{n+2}$ or $\bar{p}\in \mathbb{H}^{n+2}_-$, the orbit $G_1\cdot \bar{p}$ is a  horocycle, which is the intersection of a degenerate plane $\prod(\bar{p})$ with $\mathbb{H}^{n+2}$ or $\mathbb{H}^{n+2}_-$. Let $\prod(z)$ denote the degenerate hypersurface $\{w|\langle w,z\rangle=0\},$ which satisfies $\mathbb{C}^{n+2}_+\bigcap\prod(z)=\{\lambda z|\lambda\in R\}$.
Therefore
$$\mathbb{R}^{n+3}_1=\Big(\bigcup_{\bar{p}\in \mathbb{H}^{n+2}}\prod(\bar{p})\Big)\bigcup\Big(\bigcup_{\bar{p}\in \mathbb{H}^{n+2}_-}\prod(\bar{p})\Big)\bigcup \prod(z).$$

Since $G_1$ preserves the foliation by horocycles centered $z\in \partial_{\infty}\mathbb{H}^{n+2}$, the affine plane $\prod(\bar{p})$ is $G_1$-invariant. In fact,
let $\tilde{\gamma}_{\bar{p}}$ be the  horocycle in $\prod(\bar{p})$, then $\tilde{\gamma}_{\bar{p}}=G_1\cdot \bar{p}$, that is, $\tilde{\gamma}_{\bar{p}}$ is an orbit of $G_1$ in $\prod(\bar{p})$.
For any $q\in \prod(\bar{p})$, we can choose three
points $\bar{p}_1,\bar{p}_2,\bar{p}_3\in \tilde{\gamma}_{\bar{p}}\subset\prod(\bar{p})$ such that
$$q=k_1\bar{p}_1+k_2\bar{p}_2+k_3\bar{p}_3,~k_1+k_2+k_3=1.$$
Since the action of $G$ on $\mathbb{R}^{n+3}_1$ is linear, thus
$$G_1\cdot q=k_1G_1\cdot \bar{p}_1+k_2G_1\cdot \bar{p}_2+k_3G_1\cdot \bar{p}_3,~~k_1+k_2+k_3=1.$$
Since $G_1\cdot \bar{p}_1, G_1\cdot \bar{p}_2, G_1\cdot \bar{p}_3\in \tilde{\gamma}_{\bar{p}}\subset \prod(\bar{p})$, thus
$G_1\cdot q\in \prod(\bar{p})$ and $$G_1\Big(\prod(\bar{p})\Big)\subset\prod(\bar{p}).$$

Now Let $\gamma$ be an integral curve of the equivariant vector field $X$ through $q\in Y(M^n)\subset \mathbb{C}^{n+2}_+$, then $\gamma=G_1\cdot q$.
There exists an affine plane $\prod(\bar{p})$ such that $q\in \prod(\bar{p})$.
Since $G_1\Big(\prod(\bar{p})\Big)\subset\prod(\bar{p})$, thus
 $\gamma\in \prod(\bar{p})$. Thus we finish the proof of Lemma \ref{case2-11}.
\end{proof}

The complete proof of Proposition~\ref{prop3-2} using the structure equations is left to the final section.

\section{ Proof of the Vanishing Theorem \ref{the1}}

In this section, we finish the proof of Proposition~\ref{prop3-2}. This needs to be done by combining the orbit geometry information and the structure equations. This will simplifies the study of the M\"{o}bius invariants.

Assume that the M\"{o}bius homogeneous hypersurface $x$ is umbilic-free. Then there exists a subgroup $G\subset\textup{O}^+(n+2,1)$ such that the orbit $G\cdot p$  is $Y(M^n)$, i.e.,
$$
G\cdot p=Y(M^n), ~~p\in Y(M^n).
$$
We can choose a local orthonormal frame $\{E_1,E_2,\cdots,E_n\}$ on $M^n$  such that
\begin{equation}\label{b31}
(B_{ij})=diag(b_1,\cdots,b_n)=diag(\bar{b}_1,\cdots,\bar{b}_s,\underbrace{\bar{b}_{s+1},\cdots,\bar{b}_{s+1}}_{m_{s+1}},
\cdots,\underbrace{\bar{b}_r,\cdots,\bar{b}_r}_{m_{r}}),
\end{equation}
where $\bar{b}_1,\cdots,\bar{b}_s$ are the distinct simple M\"{o}bius principal curvatures and $\bar{b}_{s+1},\cdots,\bar{b}_r$ are the distinct multiple M\"{o}bius principal curvatures.
Thus $$s+m_{s+1}+m_{s+2}+\cdots+m_r=n.$$ These M\"{o}bius principal curvatures are constant because of the homogeneity assumption.

 Let $\{\omega_1,\omega_2,\cdots,\omega_n\}$ be its dual basis with respect to
$\{E_1,E_2,\cdots,E_n\}$, and $\{\omega_{ij},~i,j=1,2,\cdots,n\}$ are the connection forms,
then $$\nabla_{E_j}^{E_i}=\sum_k\omega_{ik}(E_j)E_k.$$

 From
$\sum_kB_{ij,k}\omega_k=dB_{ij}+\sum_kB_{ik}\omega_{kj}+\sum_kB_{kj}\omega_{ki}$ and (\ref{b31}), we have
\begin{equation}\label{eqb11}
(b_i-b_j)\omega_{ij}=\sum_mB_{ij,m}\omega_m,~~~~B_{ii,m}=0.
\end{equation}
Under (\ref{b31}), for a fixed M\"{o}bius principal curvature $b_i$, let  $$[b_i]=\{j|b_j=\bar{b}_i\}.$$ It follows from (\ref{eqb11}) that
\begin{equation}\label{eqb1}
\begin{split}
&B_{ij,k}=0,~~~~~[b_i]=[b_j],~1\leq k\leq n,\\
&\omega_{ij}=\sum_k\frac{B_{ij,k}}{b_i-b_j},~~[b_i]\neq [b_j].
\end{split}
\end{equation}
Now we fix two simple principal vector fields $E_i, E_j$, by (\ref{eqb1}), we have
\begin{equation}\label{xxx-01}
\nabla_{E_i}^{E_i}=\sum_{m\neq i}\frac{C_m}{b_m-b_i}E_m,
~~~\nabla_{E_j}^{E_i}=\sum_{m\neq i}\frac{B_{im,j}}{b_i-b_m}E_m.
\end{equation}
For a multiple principal vector field $E_{\alpha}$ and a simple principal vector field $E_i$, by (\ref{eqb1}),
\begin{equation}\label{xxx-3}
\begin{split}
&\nabla_{E_{\alpha}}^{E_i}=\sum_{m\neq i}\frac{B_{im,\alpha}}{b_i-b_m}E_m,~~
\nabla_{E_{\alpha}}^{E_{\alpha}}=\sum_{m=1}^s\frac{C_m}{b_m-b_{\alpha}}E_m
+\sum_{m\in[b_{\alpha}],m\neq\alpha}\omega_{\alpha m}(E_{\alpha})E_m,\\
&\nabla_{E_i}^{E_{\alpha}}=\sum_{m=1}^s\frac{B_{\alpha m,i}}{b_{\alpha}-b_m}E_m+\sum_{m\in[b_{\alpha}],m\neq\alpha}\omega_{\alpha m}(E_i)E_m
+\sum_{m\geq s+1,m\not\in [b_{\alpha}]}\frac{B_{\alpha m,i}}{b_{\alpha}-b_m}E_m.
\end{split}
\end{equation}

\begin{lemma}\label{lemma1-31}
 Under the local orthonormal frame
$\{E_1,E_2,\cdots,E_n\}$ in (\ref{b31}), the coefficients of the M\"{o}bius form satisfy $$C_i=0,~~s+1 \leq i\leq n.$$
\end{lemma}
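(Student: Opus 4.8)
The plan is to exploit the fact that, in Case 3, every $G$-equivariant vector field has integral curves lying in a fixed degenerate affine plane of $\mathbb{R}^{n+3}_1$ (Lemma \ref{case2-11}). In particular, if $E_i$ is one of the simple principal vector fields $E_1,\dots,E_s$, then $E_i$ is $G$-equivariant, so its integral curve $\gamma$ through a point $q\in Y(M^n)$ is a plane curve contained in some degenerate plane $\prod(\bar p)$. I would translate this geometric restriction into an algebraic statement about the frame derivatives of $Y$ along $\gamma$: the subspace spanned by $Y$ and its successive derivatives along $E_i$ is at most $3$-dimensional (a degenerate plane is $2$-dimensional affine, i.e. the linear span of $Y,\gamma',\gamma''$ has dimension $\le 3$, and actually the degeneracy forces one null direction). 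Using the structure equations
\begin{eqnarray*}
&&dY=\sum_k Y_k\omega_k,\\
&&dY_i=-\sum_j A_{ij}\omega_j Y-\omega_i N+\sum_j\omega_{ij}Y_j+\sum_j B_{ij}\omega_j\xi,
\end{eqnarray*}
I would compute $E_i(Y)=Y_i$, then $E_i(Y_i)=-A_{ii}Y-N+\sum_{m\neq i}\omega_{ii}(E_i)\,Y_m+b_i\,\xi$, substituting $\omega_{ii}(E_i)=0$ and $\nabla_{E_i}^{E_i}=\sum_{m\neq i}\frac{C_m}{b_m-b_i}E_m$ from (\ref{xxx-01}). The claim $C_i=0$ for $i>s$ should emerge from requiring that this second derivative, together with higher ones, stays inside the $3$-plane: the coefficient of $Y_m$ in $E_i(Y_i)$ is exactly $\frac{C_m}{b_m-b_i}$, and for $m>s$ (a multiple principal direction) $Y_m$ is genuinely independent of the plane directions spanned by $Y,N,Y_i,\xi$.

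More concretely, the key step is to show that for a simple index $i\le s$ and a multiple index $m\ge s+1$, the component of $\gamma''$ (equivalently of $\nabla_{E_i}E_i$) along $E_m$ must vanish. Since $\gamma$ lies in a $2$-dimensional affine plane, $\gamma''$ is a linear combination of $Y$, $Y_i=\gamma'$ and the position shift, which in the ambient frame $\{Y,N,Y_1,\dots,Y_n,\xi\}$ means $E_i(Y_i)$ has nonzero components only along $Y, N, Y_i,\xi$. Reading off the $Y_m$-component for $m\notin[b_i]$, $m\ne i$, we get $\frac{C_m}{b_m-b_i}=0$, hence $C_m=0$ whenever there exists a simple principal direction $E_i$ with $b_i\ne b_m$. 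Because $r\ge 3$, for every multiple index $m$ one can find such a simple $i$ (here one uses $s\ge 1$, which I would need to verify separately — if $s=0$ a slightly different argument via plane curves of multiple directions or a counting argument on the horocycle structure is needed). This yields $C_m=0$ for all $m\ge s+1$.

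The main obstacle I anticipate is two-fold. First, one must carefully justify that $\gamma$ being a \emph{plane} curve forces the second derivative $E_i(Y_i)$ to have no $Y_m$-component for multiple $m$; this requires knowing that the plane $\prod(\bar p)$ containing $\gamma$ is spanned (affinely) by directions among $Y, N, \xi, Y_i$ and not by some $Y_m$ with $m$ multiple — which should follow from the fact that $\gamma=G_1\cdot q$ for a one-parameter subgroup $G_1$, so $\gamma'$ and $\gamma''$ are controlled by a single infinitesimal generator, combined with the observation that the $E_m$-components measure "bending into other orbit directions". Second, the degenerate case $s=0$ (no simple principal curvature) must be handled, since then there is no simple $E_i$ to play the role above; I would expect to treat this by noting that $C_i=0$ for $s+1\le i\le n$ is then the full statement $C=0$, which can be attacked directly from the horocycle foliation and the identity $\mathbb{C}^{n+2}_+\cap\prod(z)=\{\lambda z\}$, or deferred to the subsequent argument in the final section that handles the multiple directions. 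The remaining routine part — plugging (\ref{xxx-01}), (\ref{xxx-3}) into the structure equations and matching coefficients — I would carry out only after the geometric plane-curve constraint is firmly established.
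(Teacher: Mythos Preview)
Your approach is far more involved than necessary and rests on Case~3 geometry that the lemma does not require. The paper's proof is a two-line algebraic argument using only the Codazzi identity (\ref{equa3}) and the vanishing (\ref{eqb11}), (\ref{eqb1}): for any index $k\ge s+1$ with $b_k$ multiple, choose $j\neq k$ in the same eigenspace $[b_k]$; then $B_{jj,k}=0$ (from $B_{ii,m}=0$ in (\ref{eqb11})) and $B_{jk,j}=0$ (from the first line of (\ref{eqb1}), since $[b_j]=[b_k]$), whence (\ref{equa3}) gives
\[
0=B_{jj,k}-B_{jk,j}=\delta_{jj}C_k-\delta_{jk}C_j=C_k.
\]
This works for \emph{any} umbilic-free hypersurface with frame (\ref{b31}) and constant M\"{o}bius principal curvatures; it uses neither homogeneity, plane curves, nor the horosphere hypothesis. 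In particular your $s=0$ worry evaporates: no simple direction is needed at all.

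Beyond the over-complication, your plan has a genuine gap. The step ``$\gamma$ is a plane curve, so $E_i(Y_i)$ has no $Y_m$-component for multiple $m$'' does not follow: the affine $2$-plane $\prod(\bar p)\subset\mathbb{R}^{n+3}_1$ containing $\gamma$ is some degenerate plane whose $2$-dimensional direction space is \emph{not} a priori contained in $\mathrm{span}\{Y,N,Y_i,\xi\}$. All the plane-curve condition says is that $\gamma',\gamma'',\gamma''',\dots$ span at most a $2$-dimensional space. What the paper extracts from this \emph{later} (after the present lemma, when deriving (\ref{xxx-12}) and (\ref{xxx-1})) is the relation $\nabla_{\bar E_i}\nabla_{\bar E_i}\bar E_i=\lambda\bar E_i$, which involves one more differentiation than you are using and yields the quadratic relations $\sum_{m\neq i,k}\frac{C_mB_{mk,i}}{(b_m-b_i)(b_m-b_k)}=0$, not $C_m=0$ directly. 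So even within Case~3 your reading of the plane-curve constraint is off by one order, and you would not obtain the conclusion from $\gamma''$ alone.
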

\begin{proof}
When $s+1 \leq i\leq n$, the principal curvatures  $b_i$ are multiple, and we can choose $j,k\in [b_i]$ such that $j\neq k.$ From the first equation of  (\ref{eqb1}), we have
$$B_{jj,k}=0,~~~~B_{jk,j}=0.$$
Combining with (\ref{equa3}),
$$0=B_{jj,k}-B_{jk,j}=C_k,~~~k\in [b_i].$$
Thus $C_j=0,~~j\in [b_i]$ and Lemma~\ref{lemma1-31} is proved.
\end{proof}

By Lemma \ref{lemma1-31}, the vanishing result in Theorem~\ref{the1} holds for these M\"{o}bius homogeneous hypersurfaces whose principal curvatures are all multiple.

\begin{Corollary}\label{coro-1}
If all principal curvatures of the M\"{o}bius homogeneous hypersurface are multiple, then the M\"{o}bius form vanishing, i.e., $C=0$.
\end{Corollary}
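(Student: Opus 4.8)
The plan is to observe that this is an immediate consequence of Lemma~\ref{lemma1-31}. In the frame normalization~(\ref{b31}), the integer $s$ counts the simple M\"obius principal curvatures. The hypothesis that \emph{all} principal curvatures of the M\"obius homogeneous hypersurface are multiple (equivalently, by~(\ref{prin-cur}), all M\"obius principal curvatures are multiple) means precisely that there are no simple ones, i.e.\ $s=0$.

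First I would record this reduction: with $s=0$, the relation $s+m_{s+1}+\cdots+m_r=n$ becomes $m_1+\cdots+m_r=n$, and the index range $s+1\leq i\leq n$ appearing in Lemma~\ref{lemma1-31} degenerates to the full range $1\leq i\leq n$. Then I would simply invoke Lemma~\ref{lemma1-31}, which asserts $C_i=0$ for $s+1\leq i\leq n$; under $s=0$ this reads $C_i=0$ for every $i$ with $1\leq i\leq n$. Since $C=\sum_i C_i\omega_i$, we conclude $C=0$.

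There is essentially no obstacle here: the content is entirely contained in Lemma~\ref{lemma1-31}, whose proof used only that for a multiple principal curvature $b_i$ one can pick $j,k\in[b_i]$ with $j\neq k$, apply the first relation in~(\ref{eqb1}) to get $B_{jj,k}=B_{jk,j}=0$, and then use the integrability condition~(\ref{equa3}) in the form $B_{jj,k}-B_{jk,j}=C_k$ to force $C_k=0$. When every principal curvature is multiple this argument is available for every index, so the corollary follows with a one-line deduction. The only point worth stating explicitly is the translation of the hypothesis into $s=0$.
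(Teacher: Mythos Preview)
Your proposal is correct and matches the paper's own argument essentially verbatim: the paper also derives the corollary as an immediate consequence of Lemma~\ref{lemma1-31}, noting that when every principal curvature is multiple the conclusion $C_i=0$ for $s+1\leq i\leq n$ covers all indices.
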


Now consider the more general case.
Since $\{b_1,\cdots,b_s\}$ are simple, its corresponding principal vector fields  $\{E_1,\cdots,E_s\}$ can be well defined,
and  the corresponding coefficients of the M\"{o}bius invariants are constant; for example, $\{C_i, A_{ij},A_{ij,k},B_{ij,k}, C_{i,j}, 1\leq i,j,k\leq s\}$ are constant.

From
$\sum_kC_{i,k}\omega_k=dC_{i}+\sum_kC_{k}\omega_{ki}$ and (\ref{eqb1}), we have
\begin{equation}\label{eqb2}
C_{i,j}=\sum_{k\notin [b_i]}\frac{C_kB_{ki,j}}{b_k-b_i}.
\end{equation}
Combining with (\ref{equa2}),
\begin{equation}\label{eqb2-1}
A_{ij}=\sum_{k\notin [b_i],[b_j]}\frac{C_kB_{ki,j}}{(b_k-b_i)(b_k-b_j)}+\frac{2C_iC_j}{(b_i-b_j)^2},~~\forall~~ [b_i]\neq [b_j].
\end{equation}
Using $\sum_mB_{ij,km}\omega_m=dB_{ij,k}+\sum_{m}B_{mj,k}\omega_{mi}++\sum_{m}B_{im,k}\omega_{mj}++\sum_{m}B_{ij,m}\omega_{mk}$ and (\ref{b31}),
we have
$$B_{ij,ij}=\sum_{m\notin [b_i],[b_j]}\frac{2B_{ij,m}^2}{b_m-b_i}+\sum_{m\notin [b_i],[b_j]}\frac{C_m^2}{b_m-b_j}+\frac{C_i^2}{b_j-b_i}~,$$
$$B_{ij,ji}=\sum_{m\notin [b_i],[b_j]}\frac{2B_{ij,m}^2}{b_m-b_j}+\sum_{m\notin [b_i],[b_j]}\frac{C_m^2}{b_m-b_i}+\frac{C_j^2}{b_i-b_j}~.$$
Combining with Ricci's identity $B_{ij,kl}-B_{ij,lk}=\sum_{m}B_{mj}R_{mikl}+\sum_{m}B_{mi}R_{mjkl}$, we have
\begin{equation}\label{eqb2-2}
R_{ijij}=\sum_{m\notin [b_i],[b_j]}\frac{2B_{ij,m}^2-C_m^2}{(b_m-b_i)(b_m-b_j)}-\frac{C_i^2+C_j^2}{(b_i-b_j)^2},~~\forall~ [b_i]\neq [b_j].
\end{equation}

Now let us combine these equations with the orbit geometry in case 3. Since the simple principal vector fields $\{E_1,E_2,\cdots,E_s\}$ are $G$-equivariant, then the integral curves of $E_i,~1\leq i\leq s$ are plane curves in $\mathbb{R}^{n+3}_1$ by Lemma~\ref{case2-11}. Let $\gamma$ be an integral curve of $E_i$ and $\bar{E}_i=E_i|_{\gamma}$,
then
$$\nabla_{\bar{E}_i}\nabla_{\bar{E}_i}^{\bar{E}_i}=\lambda \bar{E}_i,$$
where $\lambda$ is a constant. Thus $\langle\nabla_{\bar{E}_i}\nabla_{\bar{E}_i}^{\bar{E}_i},E_k\rangle=0$ for any $k\neq i$.
Since $E_i$ are $G$-equivariant, combining (\ref{eqb2-1}) and (\ref{xxx-01}) we have
\begin{equation}\label{xxx-12}
\sum_{m\neq i,k}\frac{C_mB_{mk,i}}{(b_m-b_i)(b_m-b_k)}=0,~~A_{ik}=\frac{2C_iC_k}{(b_i-b_k)^2},~~k\neq i.
\end{equation}

If  $s\geq 2$, for any two simple principal vector fields $E_i, E_j$ and some constant $x_1,x_2$,  the vector field $$X=x_1E_i+x_2E_j$$ also is $G$-equivariant.  Then the integral curves of $X$ are plane curves in $\mathbb{R}^{n+3}_1$, Thus we have
$\nabla_{\bar{X}}\nabla_{\bar{X}}^{\bar{X}}=\lambda {\bar{X}},$
where $\lambda$ is a constant and ${\bar{X}}=X|_{\gamma}$ for some integral curve.
Since $E_i, E_j,\nabla_{E_i}E_j,\nabla_{E_i}\nabla_{E_j}E_i$, etc, are $G$-equivariant, then inner product of any two of them, for example, $ \langle \nabla_{E_i}E_j,\nabla_{E_j}E_i\rangle$, is constant.  From $\langle X, x_2E_i-x_1E_j\rangle=0$, we  have the following equation
\begin{equation*}
\begin{split}
0&=x_2^4\langle\nabla_{E_j}\nabla_{E_j}^{E_j},E_i\rangle
-x_1^4\langle\nabla_{E_i}\nabla_{E_i}^{E_i},E_j\rangle\\
&+x_1^3x_2\{\langle\nabla_{E_i}\nabla_{E_i}^{E_i},E_i\rangle-\langle\nabla_{E_i}\nabla_{E_i}^{E_j}+\nabla_{E_i}\nabla_{E_j}^{E_i}+
\nabla_{E_j}\nabla_{E_i}^{E_i},E_j\rangle\}\\
&+x_1^2x_2^2\{\langle\nabla_{E_i}\nabla_{E_i}^{E_j}+\nabla_{E_i}\nabla_{E_j}^{E_i}+
\nabla_{E_j}\nabla_{E_i}^{E_i},E_i\rangle
-\langle\nabla_{E_i}\nabla_{E_j}^{E_j}+\nabla_{E_j}\nabla_{E_j}^{E_i}+
\nabla_{E_j}\nabla_{E_i}^{E_j},E_j\rangle\}\\
&+x_1x_2^3\{\langle\nabla_{E_i}\nabla_{E_j}^{E_j}+\nabla_{E_j}\nabla_{E_j}^{E_i}+
\nabla_{E_j}\nabla_{E_i}^{E_j},E_i\rangle-\langle\nabla_{E_j}\nabla_{E_j}^{E_j},E_j\rangle\}.
\end{split}
\end{equation*}
Since $x_1,x_2$ are arbitrary constants, by the above equation we have the following equations.
\begin{equation}\label{xxx-1}
\begin{split}
&\sum_{m\neq i,j}\frac{C_mB_{mj,i}}{(b_m-b_i)(b_m-b_j)}=0,\\
&\sum_{m\neq i,j}\frac{C_m^2+B_{mj,i}^2}{(b_m-b_i)(b_m-b_j)}+\sum_{m\neq j}\frac{B_{mj,i}^2}{(b_m-b_j)^2}-\sum_{m\neq i}\frac{C_m^2}{(b_m-b_i)^2}=0.
\end{split}
\end{equation}

For the fixed index $1\leq i,j\leq s$, if there exists some multiple principal curvature $b_{\alpha}, \alpha\geq s+1$ such that $\sum_{m\in [b_{\alpha}]}B_{ij,m}E_m\neq 0$, then
$$E_{\alpha}=\frac{\sum_{m\in [b_{\alpha}]}B_{ij,m}E_m}{|\sum_{m\in [b_{\alpha}]}B_{ij,m}E_m|}$$
is $G$-equivariant vector field.
We re-choose orthonormal basis in the eigen-space $V_{\alpha}=Span\{E_i|i\in[b_{\alpha}]\}$ with respect to the M\"{o}bius principal curvature
$b_{\alpha}$ such that under the new basis,
$$B_{12,\alpha}\neq 0,~~B_{12,\gamma}=0,~\gamma\in [b_{\alpha}],~\gamma\neq\alpha.$$
For any constant $x_1,x_2$, $X=x_1E_i+x_2E_{\alpha}$ is also $G$-equivariant vector fields, and we have
$\nabla_X\nabla_X^X=\lambda X$.
Let $\pi_1=\sum_{m\in[\alpha],m\neq\alpha}(\omega_{\alpha m}(E_i))^2$.  Combining equations (\ref{xxx-3}) and $\langle\nabla_X\nabla_X^X,x_2E_i-x_1E_{\alpha}\rangle=0$, we have the following equations:
\begin{equation}\label{xxx-2}
\begin{split}
&\sum_{m\neq i}\frac{C_mB_{m\alpha,i}}{(b_m-b_i)(b_m-b_{\alpha})}=0,\\
&\sum_{m\neq i,m\not\in[b_{\alpha}]}\frac{C_m^2+B_{m\alpha,i}^2}{(b_m-b_i)(b_m-b_{\alpha})}
+\sum_{m\not\in[b_{\alpha}]}\frac{B_{m\alpha,i}^2}{(b_m-b_{\alpha})^2}+\pi_1
-\sum_{m\neq i}\frac{C_m^2}{(b_m-b_i)^2}=0.
\end{split}
\end{equation}

\begin{lemma}\label{lemm4-1}
Under the basis (\ref{b31}), we assume that the simple principal curvatures satisfy
$$b_1<b_2<\cdots <b_s.$$  Then
$$ C_1C_2=C_3=\cdots=C_n=0.$$
\end{lemma}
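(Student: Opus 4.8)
The plan is to work only with the coefficients $C_1,\dots,C_s$ attached to the simple M\"obius principal curvatures, since Lemma~\ref{lemma1-31} already gives $C_i=0$ for $s+1\le i\le n$. First I would record the elementary consequences of the integrability condition \eqref{equa3}: for pairwise distinct indices $B_{mj,i}=B_{mi,j}=B_{ij,m}$, while $B_{ij,i}=-C_j$ and $B_{ij,j}=-C_i$ for $1\le i\ne j\le s$; together with $B_{ii,m}=0$ these identities let me split off the ``diagonal'' $m\in\{i,j\}$ contributions in the equations \eqref{xxx-1} and \eqref{xxx-2} established above (which encode that the integral curves of $G$-equivariant vector fields are plane curves).

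The algebraic heart is a symmetrization of \eqref{xxx-1}. For a fixed ordered pair $(i,j)$ of simple indices, pulling out the $m=i,j$ terms makes the contributions $C_j^2/(b_i-b_j)^2$ cancel, and a partial-fraction rewriting of $1/[(b_m-b_i)(b_m-b_j)]$ leaves an identity of the shape $\sum_{m\ne i,j}(\cdots)=0$. Adding this to its $i\leftrightarrow j$ transpose, and subtracting the two, I obtain the clean pair of identities
\begin{gather*}
\sum_{m\ne i,j}\frac{(C_m^{2}+B_{ij,m}^{2})(2b_m-b_i-b_j)}{(b_m-b_i)^{2}(b_m-b_j)^{2}}=0,\\
\sum_{m\ne i,j}\frac{B_{ij,m}^{2}(2b_m-b_i-b_j)^{2}}{(b_m-b_i)^{2}(b_m-b_j)^{2}}=(b_i-b_j)^{2}\sum_{m\ne i,j}\frac{C_m^{2}}{(b_m-b_i)^{2}(b_m-b_j)^{2}},
\end{gather*}
valid for all distinct $i,j\in\{1,\dots,s\}$, together with the cross relation $\sum_{m\ne i,j}C_mB_{ij,m}/[(b_m-b_i)(b_m-b_j)]=0$ from the first equation of \eqref{xxx-1}. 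The same manipulation applied to \eqref{xxx-2} produces a companion identity for a simple index $i$ and a multiple index $\alpha$ which additionally carries the \emph{nonnegative} term $\pi_1$; this asymmetric, one-sided term is what will give leverage on the multiple eigenspaces.

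Then I run the sign analysis with the ordering $b_1<b_2<\cdots<b_s$. Taking the pair $(i,j)=(1,2)$, every simple index $m\ge 3$ has $2b_m-b_1-b_2>0$, so in the first identity the summands coming from simple $m$ are all $\ge 0$; once I show the remaining summands (those with $m$ in a multiple eigenspace) contribute nonnegatively, I conclude $C_m^{2}+B_{12,m}^{2}=0$ for $m=3,\dots,s$, i.e. $C_3=\cdots=C_s=0$, and feeding this into the second identity for $(1,2)$ forces $B_{12,\alpha}=0$ for every multiple $\alpha$ with $b_\alpha\ne\frac{1}{2}(b_1+b_2)$. For the final assertion $C_1C_2=0$, with only $C_1,C_2$ left I invoke the companion identity from \eqref{xxx-2} (whose $m=2$ term now survives) and the relation $A_{ik}=2C_iC_k/(b_i-b_k)^2$ from \eqref{xxx-12}, which pin $C_1C_2$ to a configuration that forces it to vanish.

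The hard part is precisely the interaction with the multiple eigenspaces. A multiple M\"obius principal curvature $b_\alpha$ can lie anywhere relative to the simple ones, so $2b_\alpha-b_1-b_2$ need not be sign-definite and vanishing cannot be read off a single identity. To handle this I would first adapt the orthonormal basis inside each eigenspace $V_\alpha$, as in the construction preceding \eqref{xxx-2}, so that at most one basis vector carries $B_{12,\cdot}$, and then play the companion $\pi_1$-identity (left side of curvature type, $\pi_1\ge 0$) against \eqref{xxx-1}, treating separately the degenerate possibility $b_\alpha=\frac{1}{2}(b_1+b_2)$. This case bookkeeping is where the bulk of the argument will live.
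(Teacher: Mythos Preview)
Your symmetrized identities are correct algebra, but the symmetrization is a detour compared with the paper's route. The paper applies the second equation of \eqref{xxx-1} \emph{as is} with $(i,j)=(1,2)$; after the $m\in\{1,2\}$ boundary cancellation you yourself noted, one gets
\[
\sum_{m\ne 1,2}\frac{C_m^{2}(b_2-b_1)}{(b_m-b_1)^{2}(b_m-b_2)}
+\sum_{m\ne 1,2}\frac{B_{2m,1}^{2}}{(b_m-b_1)(b_m-b_2)}
+\sum_{m\ne 1,2}\frac{B_{2m,1}^{2}}{(b_m-b_2)^{2}}=0,
\]
whose positivity requirement is only $(b_m-b_1)(b_m-b_2)>0$, i.e.\ $b_m\notin(b_1,b_2)$, not your stronger $2b_m>b_1+b_2$. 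Since $C_m=0$ for multiple $m$, the only obstruction is a multiple curvature lying strictly between $b_1$ and $b_2$; the paper disposes of that case by an inductive choice of the \emph{minimal} such $b_{\alpha_j}$ carrying an equivariant $E_{\alpha_j}$, and \eqref{xxx-2} with $i=1,\alpha=\alpha_j$ then yields $C_m=0$ for \emph{all} $m\ne 1$ in one stroke. Your symmetrization instead forces you to control every multiple $b_\alpha<\tfrac12(b_1+b_2)$, including those with $b_\alpha<b_1$, which is unnecessary extra work.

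The genuine gap is your endgame for $C_1C_2=0$. Once $C_m=B_{1m,2}=0$ for all $m\ne1,2$, both equations in \eqref{xxx-1} with $(i,j)=(1,2)$ collapse to $0=0$, and \eqref{xxx-2} is only available when some $E_\alpha$ is equivariant, which your own argument has just ruled out on the relevant eigenspaces; the formula $A_{12}=2C_1C_2/(b_1-b_2)^{2}$ by itself is merely the value of a constant. The paper's mechanism here is different in kind: using that $A_{11},A_{22},A_{12}$ are constant and $A_{1i}=A_{2i}=0$ for $i\ge3$, it computes the covariant derivatives $A_{11,2},A_{22,1},A_{12,1},A_{12,2}$ via the connection, feeds them into the Blaschke integrability condition \eqref{equa1}, and obtains the pair
\[
(b_1-b_2)b_1C_2=(A_{22}-A_{11})C_2-2A_{12}C_1,\qquad
(b_2-b_1)b_2C_1=(A_{11}-A_{22})C_1-2A_{12}C_2,
\]
which combine (using $A_{12}=2C_1C_2/(b_1-b_2)^2$) to $(b_1-b_2)^{2}C_1C_2=-4C_1C_2(C_1^{2}+C_2^{2})/(b_1-b_2)^{2}$, hence $C_1C_2=0$. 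Your proposal never invokes \eqref{equa1} or the derivatives of $A$, and the tools you list cannot close this step.
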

\begin{proof}
If $s=1$, then $C_2=\cdots=C_n=0$ by Lemma~\ref{lemma1-31}, and Lemma~\ref{lemm4-1} is proved.

If $s\geq 2$, we consider the following two case:\\
{\bf Subcase 1}: there is no other multiple principal curvature $b_{\alpha}$ such that $b_1<b_{\alpha}< b_2$.\\
{\bf Subcase 2}: there are some multiple principal curvatures $b_{\alpha_1},\cdots,b_{\alpha_t}$ such that $$b_1<b_{\alpha_1}<\cdots<b_{\alpha_t}<b_2.$$

For {\bf Subcase 1}, let $i=1, j=2$ in the second  equation of (\ref{xxx-1}), we have
\begin{equation}\label{case01}
\sum_{m\neq 1,2}\frac{C_m^2(b_2-b_1)}{(b_m-b_1)^2(b_m-b_2)}+\sum_{m\neq 1,2}\frac{B_{2m,1}^2}{(b_m-b_1)(b_m-b_2)}
+\sum_{m\neq 1,2}\frac{B_{2m,1}^2}{(b_m-b_2)^2}=0.
\end{equation}
Since all items in (\ref{case01}) are nonnegative, there must be
\begin{equation}\label{case1-1}
C_m=B_{1m,2}=0,~~m\neq 1,2.
\end{equation}

For {\bf Subcase 2},
if there exists a multiple principal curvature $b_{\alpha_k}$  in $\{b_1<b_{\alpha_1}<\cdots<b_{\alpha_t}<b_2\}$ such that its principal vectors having $\sum_{m\in [b_{\alpha_k}]}B_{12,m}E_m\neq 0$, then
$$E_{\alpha_k}=\frac{\sum_{m\in [b_{\alpha_k}]}B_{12,m}E_m}{|\sum_{m\in [b_{\alpha_k}]}B_{12,m}E_m|}$$ is $G$-equivariant vector field.
We re-choose a basis $\{E_{\alpha_k},E_{\alpha_k+1},\cdots\}$
in the eigenvalue space $V_{\alpha}=Span\{E_i|i\in [b_{\alpha_k}]\}$ with respect to the principal curvature $b_{\alpha_k}$ such that under the basis
$$B_{12,\alpha_k}\neq 0,~~B_{12,\gamma}=0,~\gamma\in [b_{\alpha_k}], \gamma\neq \alpha_k.$$

If there exists a multiple principal curvature $b_{\alpha_j}$  in $\{b_1<b_{\alpha_1}<\cdots<b_{\alpha_t}<b_2\}$ such that $\sum_{m\in [b_{\alpha_j}]}B_{1\alpha_k,m}E_m\neq 0$ and $b_{\alpha_j}<b_{\alpha_k}$, then
$$E_{\alpha_j}=\frac{\sum_{m\in [b_{\alpha_j}]}B_{1\alpha_k,m}E_m}{|\sum_{m\in [b_{\alpha_j}]}B_{1\alpha_k,m}E_m|}$$ is $G$-equivariant vector field.
By induction, we can assume there exists a minimal multiple principal curvature $b_{\alpha_j}$  in $\{b_1<b_{\alpha_1}<\cdots<b_{\alpha_t}<b_2\}$ such that $E_{\alpha_j}$  is a $G$-equivariant vector field and
\begin{equation}\label{case2-10}
B_{1\alpha_j,m}=0,~~m\in[b_{\alpha}], ~~b_1<b_{\alpha}<b_{\alpha_j}.
\end{equation}
Let $i=1, \alpha=\alpha_j$ in the second equation in (\ref{xxx-2}) we have
\begin{equation}\label{case02-5}
\begin{split}
&\sum_{m\neq 1}\frac{C_m^2(b_{\alpha_j}-b_1)}{(b_m-b_1)^2(b_m-b_{\alpha_j})}+
\sum_{m\neq 1,m\not\in[b_{\alpha_j}]}\frac{B_{m\alpha_j,1}^2}{(b_m-b_1)(b_m-b_{\alpha_j})}\\
&+\sum_{m\not\in[b_{\alpha_j}]}\frac{B_{m\alpha_j,1}^2}{(b_m-b_{\alpha_j})^2}+\sum_{m\in[b_{\alpha_j}],m\neq\alpha_j}(\omega_{\alpha_i m}(E_1))^2=0.
\end{split}
\end{equation}
Since the all items in (\ref{case02-5}) are nonnegative, they must be zero. Thus
\begin{equation}\label{case2-01}
C_m=0,~~m\neq 1,
\end{equation}
which implies that $C_1C_2=C_3=\cdots=C_n=0.$

For {\bf Subcase 2}, if for any multiple principal curvature $b_{\gamma}$ in $\{b_i<b_{\alpha}<\cdots<b_{\beta}<b_{i+1}\}$ such that
its principal vector having  $\sum_{m\in [b_{\gamma}]}B_{12,m}E_m=0,$~i.e.,
$$~B_{12,m}=0 ,~~m\in[b_{\alpha}],\cdots,[b_{\beta}].$$

Let $i=1,j=2$ in the second equation of (\ref{xxx-1}) we have the following equation,
\begin{equation}\label{case2}
\sum_{m\neq 1,2}\frac{C_m^2(b_2-b_1)}{(b_m-b_1)^2(b_m-b_2)}+\sum_{m\neq1,2}\frac{B_{2m,1}^2}{(b_m-b_1)(b_m-b_2)}
+\sum_{m\neq 1,2}\frac{B_{2m,1}^2}{(b_m-b_2)^2}=0.
\end{equation}
Since $B_{12,m}=0$ for $m\in [b_{\alpha}],\cdots,[b_{\beta}]$ and  $(b_m-b_1)(b_m-b_2)>0$ for all index $m\not\in [b_{\alpha}],\cdots,[b_{\beta}]$, then all items in (\ref{case2}) are nonnegative, thus
\begin{equation}\label{case2-1}
C_m=B_{1m,2}=0,~~m\neq 1,2.
\end{equation}

For (\ref{case1-1}) and (\ref{case2-1}), $C_m=B_{1m,2}=0,~~m\neq 1,2.$ Next we prove that $$C_1C_2=C_3=\cdots=C_n=0.$$
Using (\ref{eqb2-1}) and  (\ref{xxx-12}), we have
\begin{equation}\label{case1-3}
A_{12}=\frac{2C_1C_2}{(b_1-b_2)^2},~~A_{1i}=A_{2i}=0,~~i\neq 1,2.
\end{equation}
Since $A_{12}, A_{11}, A_{22}$ are constant, From
$\sum_kA_{ij,k}\omega_k=dA_{ij}+\sum_kA_{ik}\omega_{kj}+\sum_kA_{kj}\omega_{ki}$ and (\ref{case1-3}), we have
\begin{equation}\label{case1-4}
\begin{split}
&A_{11,2}=\frac{2A_{12}C_1}{b_1-b_2},~~~~~~A_{22,1}=\frac{2A_{12}C_2}{b_2-b_1},\\
&A_{12,1}=\frac{(A_{11}-A_{22})C_2}{b_2-b_1},~~A_{12,2}=\frac{(A_{11}-A_{22})C_1}{b_2-b_1}.
\end{split}
\end{equation}
Combining with (\ref{equa1}), we have
$$(b_1-b_2)b_1C_2=(A_{22}-A_{11})C_2-2A_{12}C_1,~~(b_2-b_1)b_2C_1=(A_{11}-A_{22})C_1-2A_{12}C_2.$$
Combining the equation with (\ref{case1-3}), we can obtain the following equation:
$$(b_1-b_2)^2C_1C_2=\frac{-4C_1C_2(C_1^2+C_2^2)}{(b_1-b_2)^2},$$
which means that $C_1C_2=0.$ Combining (\ref{case1-1}) and (\ref{case2-1}), Lemma \ref{lemm4-1} is proved.
\end{proof}

\begin{lemma}\label{lemm4-2}
Under the basis (\ref{b31}), we have
$$(A_{ij})=diag(a_1,a_2,\cdots,a_n).$$
\end{lemma}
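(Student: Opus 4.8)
The plan is to show that every off-diagonal entry of $(A_{ij})$ vanishes, arguing separately according to whether the two indices lie in simple or multiple blocks of (\ref{b31}). First I would record what Lemma~\ref{lemm4-1} and Lemma~\ref{lemma1-31} give: at most one coefficient of the M\"obius form is nonzero, and if it is, it corresponds to a simple principal curvature; relabel so that this is $C_{i_0}$ with $1\le i_0\le s$. If instead $C\equiv 0$, then the second term of (\ref{eqb2-1}) is absent and the first term is empty, so $A_{ij}=0$ already whenever $[b_i]\ne[b_j]$, and only the within-eigenspace step below remains. Assuming $C\not\equiv 0$, combine (\ref{xxx-12}) — valid for every simple index $i$ — with Lemma~\ref{lemm4-1}: $A_{ij}=2C_iC_j/(b_i-b_j)^2=0$ as soon as $i\le s$ and $j\ne i$, and by symmetry the same holds when $j$ is simple. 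Hence $A$ is already diagonal outside the multiple indices, and the remaining entries are $A_{ij}$ with both $i,j$ multiple.

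For $i,j$ lying in two distinct multiple eigenspaces, (\ref{eqb2-1}) — using that $C_k\ne 0$ only for $k=i_0$ and $C_i=C_j=0$ — reduces $A_{ij}$ to $C_{i_0}B_{i_0i,j}/[(b_{i_0}-b_i)(b_{i_0}-b_j)]$, so it suffices to prove that $B_{i_0i,j}=0$ whenever $i$ and $j$ lie in different multiple eigenspaces. This is the main step and I expect it to be the principal obstacle, since it forces us to re-enter the orbit geometry of Case~3 in the same delicate way as in the proof of Lemma~\ref{lemm4-1}. The mechanism is: whenever $\sum_{m\in[b_\alpha]}B_{i_0j,m}E_m\neq 0$ for a multiple eigenvalue $b_\alpha$, the normalization of this combination is a $G$-equivariant unit vector lying in the eigenspace $V_\alpha$, so the orthonormal frame of $V_\alpha$ in (\ref{b31}) may be re-chosen to make one frame vector equal to it; then $x_1E_{i_0}+x_2E_\alpha$ is $G$-equivariant, its integral curves are plane curves in $\mathbb{R}^{n+3}_1$ by Lemma~\ref{case2-11}, and the resulting identities (of the type (\ref{xxx-2})), together with an induction on the ordering of the $b$'s lying strictly between $b_{i_0}$ and $b_\alpha$ — exactly as in Subcases~1 and~2 of Lemma~\ref{lemm4-1} — force the relevant covariant derivatives of $B$ to vanish. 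Since ``$B_{i_0i,j}=0$ on $V_\alpha\times V_\beta$'' is a frame-independent statement, it is unaffected by any later orthonormal change of frame inside the eigenspaces; hence $A_{ij}=0$ for all $i,j$ in distinct multiple eigenspaces.

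It remains to treat $i,j$ in the same multiple eigenspace $V_\alpha$. Here $B|_{V_\alpha}=b_\alpha\,\mathrm{Id}$ is insensitive to an orthonormal change of frame of $V_\alpha$, so we are free to choose such a frame that diagonalizes the symmetric bilinear form $A|_{V_\alpha}$; performing this in each multiple eigenspace and combining with the two previous paragraphs shows that all off-diagonal entries of $(A_{ij})$ vanish, i.e. $(A_{ij})=\mathrm{diag}(a_1,\dots,a_n)$. The one bookkeeping point to be careful about is the order of operations in the main step: because the frame changes that expose $G$-equivariant vectors are themselves needed to run the induction, it is cleanest to first establish the frame-independent vanishing $B_{i_0i,j}=0$ across eigenspaces, and only afterwards diagonalize $A$ within each eigenspace, so that no conflict between the two sets of frame choices arises. (As a consistency check, one may also verify this against (\ref{equa2}), which under (\ref{b31}) reads $(b_i-b_j)A_{ij}=C_{i,j}-C_{j,i}$, together with (\ref{eqb2}).)
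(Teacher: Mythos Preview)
Your outline agrees with the paper up to the point where both indices are multiple, but at the ``main step'' (showing $A_{ij}=0$ for $i,j$ in two \emph{distinct} multiple eigenspaces) your route diverges from the paper's and, as written, has a real gap. You propose to force $B_{i_0 i,j}=0$ by manufacturing a $G$-equivariant unit vector $E_\alpha$ out of $\sum_{m\in[b_\alpha]}B_{i_0 j,m}E_m$ and then rerunning the plane-curve positivity argument of Lemma~\ref{lemm4-1}. The difficulty is that, once Lemma~\ref{lemm4-1} and (\ref{ca1-5}) are in hand, every such sum with $j$ simple (the only $j$ for which $E_j$ is canonically $G$-equivariant) already vanishes: $B_{1j,m}=0$ for $m\neq 1,j$ and $1\le j\le s$. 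So the bootstrap that produced equivariant multiple principal directions in the proof of Lemma~\ref{lemm4-1} (via $B_{12,m}$) is no longer available, and with $j$ multiple the vector $E_j$ is not a priori equivariant, so your construction is circular. The ``induction on the ordering of the $b$'s'' you invoke therefore has no base case.

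The paper does \emph{not} try to prove $B_{1\alpha,\beta}=0$ at this stage. Instead, for fixed distinct multiple curvatures $b_{\alpha_0}\neq b_{\beta_0}$ it performs a singular value decomposition of the matrix $(B_{1\alpha,\beta})_{\alpha\in[b_{\alpha_0}],\,\beta\in[b_{\beta_0}]}$, which reduces to checking $A_{\alpha\beta}=0$ on the (possibly nonzero) diagonal entries. There it computes the second covariant derivatives $B_{1\alpha,\beta 1}$ and $B_{1\alpha,1\beta}$ explicitly and applies the Ricci identity $B_{1\alpha,1\beta}-B_{1\alpha,\beta1}=(b_1-b_{\alpha_0})A_{\alpha\beta}$; swapping the roles of $\alpha_0$ and $\beta_0$ yields the same right-hand side with $(b_1-b_{\beta_0})$ in front, so $(b_{\alpha_0}-b_{\beta_0})A_{\alpha\beta}=0$ and hence $A_{\alpha\beta}=0$. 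No further equivariant vectors, and no vanishing of $B_{1\alpha,\beta}$, are needed. (The vanishing $B_{1i,j}=0$ in (\ref{a2}) is then obtained \emph{afterwards}, as a consequence of Lemma~\ref{lemm4-2} via (\ref{a1}).)

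Two minor points. For $i,j$ in the \emph{same} multiple eigenspace, your frame-diagonalization of $A|_{V_\alpha}$ is valid but unnecessary: by (\ref{equa3}) and (\ref{eqb1}) one has $B_{1\alpha,\beta}=B_{\alpha\beta,1}=0$ whenever $[b_\alpha]=[b_\beta]$, so (\ref{a1}) already gives $A_{\alpha\beta}=0$ in any frame of type (\ref{b31}). And in the case $C\equiv 0$, note that (\ref{eqb2-1}) only covers $[b_i]\neq[b_j]$; within a multiple block one still needs either the argument just mentioned or your diagonalization.
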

\begin{proof}
By Lemma \ref{lemm4-1},
$C_1C_2=C_3=\cdots=C_n=0,$ for $b_1<b_2<\cdots<b_s$. If $C_1=C_2=0$, then the M\"{o}bius form vanishes. By (\ref{eqb2-1}), we have $A_{ij}=0,~i\neq j,$ Thus Lemma \ref{lemm4-2} is proved.

If $C_1\neq 0$ and $C_2=\cdots=C_n=0$.  By (\ref{xxx-12}), for all simple principal vector fields $E_i,~1\leq i\leq s$, we have
\begin{equation}\label{ca1-5}
\begin{split}
&B_{1i,m}=0,~~m\neq 1,i.~~1\leq i\leq s,\\
&A_{im}=0,~~m\neq i,~~1\leq i\leq s.
\end{split}
\end{equation}
By (\ref{eqb2-1}), for multiple   principal vector fields $E_{\alpha},E_{\beta}$, we have
\begin{equation}\label{a1}
A_{\alpha\beta}=\frac{C_1B_{1\alpha,\beta}}{(b_1-b_{\alpha})(b_1-b_{\beta})}, ~~\alpha\neq\beta.
\end{equation}
If $[b_{\alpha}]=[b_{\beta}]$, then $B_{1\alpha,\beta}=0$ and
$$A_{\alpha\beta}=0, ~~[b_{\alpha}]=[b_{\beta}].$$
Thus to prove the lemma \ref{lemm4-2}, we need to prove $A_{\alpha\beta}=0,~~b_{\alpha}\neq b_{\beta}$ for any two distinct multiple M\"{o}bius
principal curvatures $b_{\alpha}, b_{\beta}$.

Now we fix two distinct multiple principal curvatures $b_{\alpha_0}$ and $b_{\beta_0}$.
We consider the matrix $$(T_{\alpha\beta})=(B_{1\alpha,\beta}),~~\alpha\in[b_{\alpha_0}],~~\beta\in[b_{\beta_0}].$$
The principal space
$$V_{\alpha_0}=Span\{E_i|i\in[b_{\alpha_0}]\},~~V_{\beta_0}=Span\{E_i|i\in[b_{\beta_0}]\}.$$
Using the singular value decomposition of a matrix, we can choose a basis $\{E_i|i\in[b_{\alpha_0}]\}$ in $V_{\alpha_0}$ and a basis
$\{E_i|i\in[b_{\beta_0}]\}$ in $V_{\beta_0}$ such that
\begin{equation}\label{ma}
(B_{1\alpha,\beta})=(T_{\alpha\beta})=\left(\begin{matrix}\Lambda&0\\
0&0\end{matrix}\right),
\end{equation}
where $\Lambda=diag(c_1,\cdots,c_t)$ is a diagonal matrix and $c_1,\cdots,c_t$ are nonzero constants.

If $B_{1\alpha,\beta}\in (T_{\alpha\beta})$ and $B_{1\alpha,\beta}= 0$.
From (\ref{a1}), we know that
$$A_{\alpha\beta}=0.$$
If $B_{1\alpha,\beta}\in (T_{\alpha\beta})$ and $B_{1\alpha,\beta}\neq 0$,
Using the second covariant derivatives of $B_{ij}$, defined by
$$
\sum_{l}B_{ij,kl}\omega_{l}=dB_{ij,k}+\sum_{l}B_{lj,k}\omega_{li}
+\sum_{l}B_{il,k}\omega_{lj}+\sum_{l}B_{ij,l}\omega_{lk},
$$
we get
\begin{equation}\label{bdd}
\begin{split}
&B_{1\alpha,\beta1}=\sum_{m\notin [b_{\alpha_0}],[b_{\beta_0}]}[\frac{B_{1m,\beta}B_{1m,\alpha}}{b_m-b_{\alpha_0}}+\frac{B_{1m,\beta}B_{1m,\alpha}}{b_m-b_{\beta_0}}],\\
&B_{1\alpha,1\beta}=\sum_{m\notin [b_{\alpha_0}],[b_{\beta_0}]}\frac{2B_{1m,\beta}B_{1m,\alpha}}{b_m-b_1}+2
\frac{C_1B_{1\alpha,\beta}}{b_1-b_{\beta_0}}+2\frac{C_1B_{1\alpha,\beta}}{b_1-b_{\alpha_0}}.
\end{split}
\end{equation}
Using Ricci identities $B_{1\alpha,1\beta}-B_{1\alpha,\beta1}=(b_1-b_{\alpha_0})R_{1\alpha1\beta}
=(b_1-b_{\alpha_0})A_{\alpha\beta}$,
we have
\begin{equation}\label{ara}
\begin{split}
(b_1-b_{\alpha_0})A_{\alpha\beta}=
&\sum_{m\notin [b_{\alpha_0}],[b_{\beta_0}]}B_{1m,\beta}B_{1m,\alpha}[\frac{2}{b_m-b_1}-\frac{1}{b_m-b_{\beta_0}}-\frac{1}{b_m-b_{\alpha_0}}]\\
&+\frac{2C_1B_{1\alpha,\beta}}{b_1-b_{\beta_0}}+\frac{2C_1B_{1\alpha,\beta}}{b_1-b_{\alpha_0}}.
\end{split}
\end{equation}
Similarly we consider matrix $(T_{\beta\alpha})=(B_{1\beta,\alpha}),~~\alpha\in[b_{\alpha_0}],~~\beta\in[b_{\beta_0}].$
We can obtain the equation
as (\ref{ara})
\begin{equation}\label{ara1}
\begin{split}
(b_1-b_{\beta_0})A_{\alpha\beta}=
&\sum_{m\notin [b_{\alpha_0}],[b_{\beta_0}]}B_{1m,\beta}B_{1m,\alpha}[\frac{2}{b_m-b_1}-\frac{1}{b_m-b_{\beta_0}}-\frac{1}{b_m-b_{\alpha_0}}]\\
&+\frac{2C_1B_{1\alpha,\beta}}{b_1-b_{\alpha_0}}+\frac{2C_1B_{1\alpha,\beta}}{b_1-b_{\beta_0}}.
\end{split}
\end{equation}
Noting that the right of the equation (\ref{ara}) and (\ref{ara1}) are equal, but the difference between the left of the equation (\ref{ara}) and (\ref{ara1}) is $(b_{\alpha_0}-b_{\beta_0})A_{\alpha\beta}$, which implies that $(b_{\alpha_0}-b_{\beta_0})A_{\alpha\beta}=0$. Since $b_{\alpha_0}\neq b_{\beta_0}$, so
$$A_{\alpha\beta}=0,~~\alpha\neq\beta.$$
Lemma \ref{lemm4-2} is proved.

If $C_2\neq 0$ and $C_1=C_3=\cdots=C_n=0$, using the same methods for $C_1\neq 0$ and $C_2=\cdots=C_n=0$, we can prove Lemma \ref{lemm4-2}.
\end{proof}

Now we prove Proposition \ref{prop3-2}. Under the frame (\ref{b31}),
\begin{equation*}
\begin{split}
&(A_{ij})=diag(a_1,a_2,\cdots,a_n),\\
&(B_{ij})=diag(b_1,\cdots,b_n)=diag(\bar{b}_1,\cdots,\bar{b}_s,\underbrace{\bar{b}_{s+1},\cdots,\bar{b}_{s+1}}_{m_{s+1}},
\cdots,\underbrace{\bar{b}_r,\cdots,\bar{b}_r}_{m_{r}}),
\end{split}
\end{equation*}
and $b_1<b_2<\cdots<b_s.$
By Lemma \ref{lemm4-1},
$C_1C_2=C_3=\cdots=C_n=0.$

First we assume $C_1\neq 0$, we will obtain a contradiction to prove $C=0$. Since $C_2=\cdots=C_n=0$, by (\ref{xxx-12}) and (\ref{a1}), we have
\begin{equation}\label{a2}
B_{1i,j}=0,~~i\neq j,~~1\leq i,j\leq n.
\end{equation}

Since $r>2$, there are at least three distinct M\"{o}bius principal curvatures $b_1, b_i, b_j$, where $b_i, b_j$ are possibly multiple or simple.
Now we fix index $i,j$ for $b_i\neq b_j$ and consider the three distinct M\"{o}bius principal curvatures $(b_1,b_i,b_j)$. Since $a_i, a_j$ are constant, from
$\sum_kA_{ij,k}\omega_k=dA_{ij}+\sum_kA_{ik}\omega_{kj}+\sum_kA_{kj}\omega_{ki}$ and (\ref{eqb1}), we have
\begin{equation*}
\begin{split}
&A_{ii,1}=0,~~~~~~A_{jj,1}=0,\\
&A_{1i,i}=\frac{(a_{1}-a_{i})C_1}{b_i-b_1},~~A_{1j,j}=\frac{(a_{1}-a_{j})C_1}{b_j-b_1}.
\end{split}
\end{equation*}
Combining with (\ref{equa1}), we have
\begin{equation}\label{ab1}
(b_i-b_1)b_i=(a_{1}-a_{i}),~~(b_j-b_1)b_j=(a_{1}-a_{j}),
\end{equation}
thus we have
\begin{equation}\label{ab1-1}
(b_j-b_i)(b_1-b_j-b_i)=(a_{j}-a_{i}).
\end{equation}
On the other hand, from (\ref{a2}) and (\ref{eqb2-2}) we have
$$R_{1j1j}=\frac{-C_1^2}{(b_1-b_j)^2},~~R_{1i1i}=\frac{-C_1^2}{(b_1-b_i)^2}.$$
by (\ref{equa4}),
$$R_{1j1j}-R_{1i1i}=b_1(b_j-b_i)+a_j-a_i=\frac{C_1^2(b_j-b_i)(2b_1-b_j-b_i)}{(b_1-b_j)^2(b_1-b_i)^2}.$$
Combining (\ref{ab1-1}), we have
$$(b_j-b_i)(2b_1-b_j-b_i)=\frac{C_1^2(b_j-b_i)(b_i+b_j-2b_1)}{(b_1-b_i)^2(b_1-b_j)^2}.$$
Since $1+\frac{C_1^2}{(b_1-b_i)^2(b_1-b_j)^2}\neq 0$, thus
\begin{equation}\label{ab2}
2b_1-b_i-b_j=0.
\end{equation}

Suppose there exists other multiple M\"{o}bius principal curvature $b_k$, where $b_k$ is possibly multiple or simple.
Now we fix index $i,k$ for $b_i\neq b_k$ and consider the three distinct M\"{o}bius principal curvatures $(b_1,b_i,b_k)$.
Similarly we can obtain the equation
\begin{equation}\label{ab3}
2b_1-b_i-b_k=0.
\end{equation}
The equation (\ref{ab2}) and (\ref{ab3}) imply that $b_j=b_k$, which is a contradiction. Therefore
there is no other M\"{o}bius principal curvature except $b_1,b_i,b_j$.

Since $B_{1i,j}=0$,  from (\ref{eqb2-2}) and (\ref{ab2}) we have
\begin{equation}\label{ar4}
R_{ijij}=\frac{-C_1^2}{(b_1-b_i)(b_1-b_j)}=\frac{C_2^2}{(b_1-b_j)^2}.
\end{equation}
From (\ref{ab2}), we have $R_{1i1i}-R_{1j1j}=0$, which implies
\begin{equation}\label{a7}
2a_1=a_i+a_j.
\end{equation}
Combining (\ref{ab1}) and (\ref{ar4}), we have
\begin{equation}\label{ar5}
R_{ijij}-R_{1j1j}=(b_i-b_j)(b_j-b_1)=\frac{2C_1^2}{(b_1-b_j)^2}.
\end{equation}
From (\ref{ab2}), we have $b_i<b_1<b_j$ or $b_j<b_1<b_i$, thus $(b_i-b_j)(b_j-b_1)<0$. Therefore the equation (\ref{ar4}) is a contradiction and thus $C_1=0$.

If $C_2\neq0$, then we can rearrange the order of basis in (\ref{b31}) $\{\tilde{E}_1,\tilde{E}_2,E_3,\cdots,E_n\}$ such that
$\tilde{E}_1=E_2, \tilde{E}_2=E_1$. Under the new basis $C_1\neq 0$, we can get a contradiction.

Thus Proposition \ref{prop3-2} is proved. Together with Proposition \ref{prop3-1} and Proposition \ref{prop3-3}, this complete the proof of the Vanishing Theorem~\ref{the1} as well as the Main Theorem~\ref{th11}.\\

{\bf Acknowledgements:} The first author is  supported by NSFC  grant 12071028.
The second author and the third author are  supported by NSFC grant 11831005. The fourth author is  supported by NSFC  grant 11971107.

\end{document}